\def\url@leostyle{%
  \@ifundefined{selectfont}{\def\UrlFont{\sf}}{\def\UrlFont{\scriptsize\ttfamily}}}
\newtheorem{theorem}{Theorem}[section]
\newtheorem{mainthm}{Theorem}
\newtheorem{proposition}[theorem]{Proposition} 
\newtheorem{corollary}[theorem]{Corollary} 
\newtheorem{lemma}[theorem]{Lemma}
\theoremstyle{definition}
\newtheorem{definition}[theorem]{Definition}
\newtheorem{remark}[theorem]{Remark}
\DeclareMathOperator{\hull}{Hull}
\DeclareMathOperator{\rad}{Rad}
\DeclareMathOperator{\tv}{TV}
\DeclareMathOperator{\pr}{pr}
\DeclareMathOperator{\op}{op}
\DeclareMathOperator{\diam}{diam}
\DeclareMathOperator{\rmin}{R_{min}}
\DeclareMathOperator{\h}{H}
\def\rrr{\mathbf{R}}
\newcommand{\ckrl}{\mathfrak{C}^k_{\rmin, \mathbf{L}}}
\def\curlyc{\mathscr{C}}
\def\tnorm{\|T\|_{\op}}
\def\snorm{\|S\|_{\op}}
\def\co{\colon\thinspace}
\def\ex{\mathbf{X}}
\def\ey{\mathbf{Y}}
\def\tildex{\widetilde{\mathbf{X}}}
\def\em{\mathbf{M}}
\def\tildem{\widetilde{\mathbf{M}}}
\def\hatem{\widehat{\mathbf{M}}}
\def\rmin{R_{\mathrm{min}}}
\def\rwfs{R_{\mathrm{wfs}}}
\def\Pi{\mathrm{I\,\!I}}
\def\diff {\mathrm{d}}
\renewcommand{\leq}{\leqslant}
\renewcommand{\geq}{\geqslant}
\newcommand{\spacing}[1]{\renewcommand{\baselinestretch}{#1}\large\normalsize}
\newcommand{\Rm}{R_{\min}}
\newcommand{\bL}{\bold L}
\newcommand{\cC}{\curlyc}
\newcommand{\cP}{\mathscr{P}}
\newcommand{\cM}{\mathscr{M}}
\newcommand{\cH}{\mathscr{H}}
\newcommand{\bbE}{\mathbf{E}}
\newcommand{\bbR}{\mathbf{R}}
\newcommand{\ind}{\mathbbm{1}}
\newcommand{\TV}{\operatorname{TV}}
\newcommand{\vol}{\operatorname{vol}}
\newcommand{\loc}{\operatorname{loc}}
\newcommand{\hloc}[1]{h_{#1}^{\loc}(p,r_1,r_2 ; t)}
\begin{document}

\title[Estimating the reach of a manifold]{Estimating the reach of a manifold\\ via its convexity defect function}

\author[Berenfeld, Harvey, Hoffmann, Shankar]{Cl\'{e}ment Berenfeld, John Harvey, Marc Hoffmann, Krishnan Shankar}
\address{Cl\'ement Berenfeld, Universit\'e Paris-Dauphine PSL, CEREMADE, Place du Mar\'echal de Lattre de Tassigny, 75016 Paris, France}
\email{berenfeld@ceremade.dauphine.fr}
\address{John Harvey, Department of Mathematics, Swansea University, Fabian Way, Swansea, SA1 8EN, U.K.}
\email{j.m.harvey@swansea.ac.uk}
\address{Marc Hoffmann, Universit\'e Paris-Dauphine PSL, CEREMADE, Place du Mar\'echal de Lattre de Tassigny, 75016 Paris, France}
\email{hoffmann@ceremade.dauphine.fr}
\address{Krishnan Shankar, National Science Foundation, 2415 Eisenhower Avenue, Alexandria, VA 22314, U.S.A.}
\email{Krishnan.Shankar-1@ou.edu}

\date{\today}

\begin{abstract}
The reach of a submanifold is a crucial regularity parameter for manifold learning and geometric inference from point clouds. This paper relates the reach of a submanifold to its convexity defect function.
Using the stability properties of convexity defect functions, along with some new bounds and the recent submanifold estimator of Aamari and Levrard [Ann. Statist. \textbf{47} 177-–204 (2019)], 
an estimator for the reach is given.
A uniform expected loss bound over a $\curlyc^k$ model is found. Lower bounds for the minimax rate for estimating the reach over these models are also provided. 
The estimator almost achieves these rates in the $\curlyc^3$ and $\curlyc^4$ cases, with a gap given by a logarithmic factor. 
\end{abstract}
\maketitle

\noindent \textbf{Mathematics Subject Classification (2010)}: 62C20, 62G05, 53A07, 53C40.

\noindent \textbf{Keywords}: Point clouds, manifold reconstruction, minimax estimation, convexity defect function, reach.

\section{Introduction}

\subsection{Motivation} The reach of a submanifold $\em \subseteq \rrr^D$ is a geometric invariant which measures how tightly the submanifold folds in on itself. Dating back to Federer \cite{federer}, it encodes both local curvature conditions as well as global `bottlenecks' arising from two regions of the manifold that are far apart in the manifold's intrinsic metric but are close in the ambient Euclidean metric.  The reach is a key regularity parameter in the estimation of other geometric information. Methods and algorithms from topological data analysis often use the reach as a `tuning parameter'. The correctness of their results depends on setting this parameter correctly. 

Statistical inference from point clouds has become an active area. In a probabilistic framework,  a {\it reach condition}, meaning that the reach of the submanifold under study is bounded below, is usually necessary in order to obtain minimax inference results in manifold learning. These include: homology inference \cite{NSW, Bal12}, curvature \cite{AL}, reach estimation itself \cite{Reach} as well as manifold estimation \cite{Gal12, KRW16, AL}.
In this context, there is a risk of algorithms 
being applied as ‘black boxes’ without attention to their underlying assumptions. 
Efficient reach estimation would be a vital addition to this field, providing a so-called sanity test of other results. 

In this direction, Aamari, Kim {\it et al.}\ paved the way: in \cite{Reach}, under some specific assumptions, an estimator of the reach has been proposed and studied when the observation is an $n$-sample of a smooth probability distribution supported on an unknown $d$-dimensional submanifold $\em$ of a Euclidean space $\rrr^D$ together with the tangent spaces at each sampled point. For certain types of $\curlyc^3$-regularity models, the estimator, based on a representation of the reach in terms of points of $\em$ and its tangent spaces (Theorem 4.18 in \cite{federer}) achieves the rate $n^{-2/(3d-1)}$. A lower bound for the minimax rate of convergence is given by $n^{-1/d}$.  In the special case when the reach of $\em$ is attained at a bottleneck, the algorithm in \cite{Reach} achieves this rate.  However, in general, one does not know whether this condition is satisfied {\it a priori}.

In this paper, we continue the study of reach estimation by taking a completely different route: we use the relationship between the reach of a  submanifold of $\rrr^D$ and its {\it convexity defect function}. This function was introduced by Attali, Lieutier and Salinas in \cite{ALS} and measures how far a (bounded) subset $\ex \subseteq \rrr^D$ is from being convex at a given scale. It is a powerful geometric tool that has other applications such as manifold reconstruction, see the recent work by Divol \cite{Di}. By establishing certain new quantitative properties of the convexity defect function of a submanifold $\em \subseteq \rrr^D$ that relate to both its curvature and bottleneck properties,  we show that the convexity defect function can be used to compute the reach of a submanifold. From this we obtain a method which transforms an estimator of $\em$, along with information on its error, into a new estimator of the reach.

The recent results of Aamari and Levrard in \cite{AL} provide an estimator of $\em$ which is optimal, to within logarithmic terms. Transforming this into an estimator of the reach, we obtain new convergence results over  general $\curlyc^k$-regularity models ($k \geq 3$). These rates improve upon the previous work of \cite{Reach}. By establishing lower bounds for the minimax rates of convergence, we prove that our results are optimal up to logarithmic terms in the cases $k=3$ and $k=4$.

\subsection{Main results} \label{sec: teaser}

We present here one of several possible definitions of the reach. 
Given a submanifold $\em \subseteq \rrr^D$, consider its $\delta$-offset given by the open set $\em^{\delta} \subseteq \rrr^D$, where
$$
\em^{\delta} = \bigcup_{p\in \em} B_{\delta}(p).
$$
Here $B_{\delta}(p)$ denotes the open Euclidean ball centered at $p$ with radius $\delta$. For small enough $\delta$ (a uniform choice for such $\delta$ exists in general only when $\em$ is compact), one has has the property that for all $y\in \em^{\delta} \setminus \em$, there is a unique straight line from $y$ to a point in $\em$ realizing the distance from $y$ to $\em$. In other words, the metric projection $\uppi \co \em^{\delta} \to \em$ is well defined.
 
\begin{definition}[Federer \cite{federer}] \label{def: reach fed}
	The \textit{reach} of a submanifold $\em$ is 
	$$\sup \left\lbrace \delta \geq 0 \colon \text{The nearest point projection } \uppi \co \em^{\delta} \to \em \text{ is well defined} \right\rbrace.$$
	We denote the reach by $R(\em)$ or simply $R$ when the context is clear. 
	\end{definition}

 Other equivalent characterizations of the reach exist. For example, in Section \ref{sec: upp bound defect} below, we use the characterization from  Theorem 4.18 in \cite{federer}. More recently Theorem 1 in \cite{boisssonatetal} defined the reach in terms of the metric distortion.

Our main results are obtained for a statistical model which imposes certain standard regularity conditions on the manifolds being considered, requires that they are compact and connected, and also imposes conditions on the distributions being considered which have support on those manifolds. The set of distributions satisfying these constraints on $\curlyc^k$ manifolds is denoted in the results below by $\mathscr{P}^k$ and these constraints are elaborated upon in Sections \ref{geomframework} and \ref{sec: upperbounds}.

\begin{mainthm}\label{main1}
	For $d$-dimensional submanifolds of regularity $\curlyc^k$ with $k \geq 3$, and for sufficiently large $n$, there exists an estimator $\widehat R$ explicitly constructed in Section \ref{sec: upperbounds} below that satisfies
	$$
	\sup_{P \in \mathscr{P}^k} \mathbf{E}_{P^{\otimes n}} \big[\big| \widehat R - R \big|\big] \leq C
	\begin{cases}
	\displaystyle	  \left( \frac{\log(n)}{n-1} \right) ^{1/d} & k =3 \\
	\displaystyle        \left( \frac{\log(n)}{n-1} \right) ^{k/(2d)} & k\geq 4,
	\end{cases}
$$
	where $\widehat R$ denotes an estimator of the reach $R = R(\em)$ constructed from an $n$-sample $(X_1,\ldots, X_n)$ of independent random variables with common distribution $P \in \mathscr{P}^k$. The quantity $C>0$ depends on $d$, $k$ and the regularity parameters that define the class $\mathscr{P}^k$ and the notation $\mathbf{E}_{P^{\otimes n}}[\cdot]$ refers to the expectation operator under the distribution $P^{\otimes n}$ of the $n$-sample $(X_1,\ldots, X_n)$. 
\end{mainthm}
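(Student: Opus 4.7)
The plan is to reduce the estimation of $R(\mathbf{M})$ to Hausdorff approximation of $\mathbf{M}$, channelled through the convexity defect function $h_{\mathbf{M}}$. Three ingredients will be combined: (i) an explicit formula recovering the reach from the convexity defect function, built from the quantitative bounds on $h_{\mathbf{M}}$ developed in the body of the paper; (ii) the Aamari--Levrard submanifold estimator $\widehat{\mathbf{M}}$ and its known Hausdorff-distance rates over $\mathscr{P}^k$; (iii) the stability of the convexity defect function, which is $1$-Lipschitz with respect to Hausdorff distance.

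Concretely, I would choose a functional $F$, built from the two-sided bounds on $h_{\mathbf{N}}$ valid uniformly over $\mathscr{P}^k$, such that $F(h_{\mathbf{N}}) = R(\mathbf{N})$ for every compact $\curlyc^k$ submanifold in the class. Define $\widehat R := F(h_{\widehat{\mathbf{M}}})$, where $\widehat{\mathbf{M}}$ is the Aamari--Levrard plug-in estimator computed from the sample. Lipschitz stability of the convexity defect function then gives
$$
\|h_{\widehat{\mathbf{M}}} - h_{\mathbf{M}}\|_\infty \leq d_H(\widehat{\mathbf{M}}, \mathbf{M}).
$$
The next step is to show that $F$ is (essentially) $1/2$-H\"older on the relevant class: because $h_{\mathbf{M}}(t)\sim t^2/(2R)$ in the small-scale regime, the reach is recovered from $h$ through an inverse-square-root-type operation, so small additive perturbations of $h$ only translate into perturbations of $R$ of order their square root.

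Combining the two stability statements gives
$$
|\widehat R - R| \leq C\, d_H(\widehat{\mathbf{M}}, \mathbf{M})^{1/2}.
$$
Taking expectation under $P^{\otimes n}$, applying Jensen's inequality to bring the square root outside, and plugging in the Aamari--Levrard expected Hausdorff bounds --- of order $(\log n/(n-1))^{2/d}$ when $k=3$ and $(\log n/(n-1))^{k/d}$ when $k\geq 4$ --- yields exactly the rates claimed.

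The principal difficulty lies in making $F$ well defined and $1/2$-H\"older continuous on $\{h_{\mathbf{N}} : \mathbf{N}\in\mathscr{P}^k\}$. The reach is notoriously unstable under pointwise perturbations of the submanifold itself, and the trick is to recover it from $h$ in a way that is robust to small uniform perturbations of $h$. This requires two-sided quantitative control of $h_{\mathbf{M}}$: a curvature-driven upper bound of the form $h_{\mathbf{M}}(t) \leq t^2/(2R) + o(t^2)$ at small scales, together with a matching lower bound that also accounts for global bottleneck contributions whenever the reach is realised by one. Once these geometric bounds --- the core technical novelty of the paper --- are in place, the statistical step reduces to a routine composition of the stability estimate with the plug-in rate, with the constant $C$ depending transparently on $d$, $k$ and the parameters defining $\mathscr{P}^k$.
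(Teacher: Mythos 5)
Your reduction to the Aamari--Levrard estimator composed with a stability estimate for the convexity defect function is indeed the paper's strategy at the highest level, but as written the proposal has three substantive problems.

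First, the claimed pointwise $1/2$-H\"older modulus is false when $k=3$, and the arithmetic only appears to work because you misquote the Aamari--Levrard rate. Theorem~\ref{ALestimator} gives $\mathbf{E}[H(\hatem,\em)]\lesssim (\log n/(n-1))^{k/d}$, i.e. $(\log n/(n-1))^{3/d}$ for $k=3$, not $(\log n/(n-1))^{2/d}$. The correct statement (Proposition~\ref{taylorpoly}) is that the Taylor remainder $|h_{\em}(t)-t^2/(2R_\ell)|$ is $O(t^4)$ only when $k\geq4$; for $k=3$ it is merely $O(t^3)$. Consequently the finite-difference estimator of $h''_{\em}(0)$, namely $2h_{\tildem}(\Delta)/\Delta^2$ with $H(\em,\tildem)\leq\epsilon$, has error $A\epsilon\Delta^{-2}+B\Delta$ when $k=3$, optimized at $\Delta=\epsilon^{1/3}$ giving modulus $\epsilon^{1/3}$, not $\epsilon^{1/2}$ (Proposition~\ref{localloss}). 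Plugging $\epsilon\asymp(\log n/(n-1))^{3/d}$ into $\epsilon^{1/3}$ recovers $(\log n/(n-1))^{1/d}$. Your $1/2$-H\"older claim with the correct Aamari--Levrard rate would give $(\log n/(n-1))^{3/(2d)}$, which contradicts the theorem and, more importantly, is not supported by the Taylor control available at $\curlyc^3$.

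Second, there is no single functional $F$ satisfying a pointwise bound $|F(h_{\tildem})-F(h_{\em})|\leq C\,H(\em,\tildem)^{1/2}$ applicable to arbitrary realizations $\tildem=\hatem$: the finite-difference bandwidth $\Delta$ must be fixed in advance (it cannot depend on the unknown random quantity $H(\hatem,\em)$), so the bound holds only on the high-probability event $\{H(\hatem,\em)\leq\epsilon\}$. The paper therefore integrates via a split into this event and its complement, using the uniform truncation $\widehat R\leq R_{\max}$ (Lemma~\ref{lem: rmax}) to control the tail, rather than Jensen on $\mathbf{E}[H^{1/2}]$.

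Third, the proposal glosses over the fact that the estimator is not a single plug-in but $\widehat R=\min\{\widehat R_{\mathrm{wfs}},\widehat R_\ell\}$, where $\widehat R_{\mathrm{wfs}}$ is estimated from the first crossing of the line $t\mapsto t-3\epsilon$ (Definition~\ref{def:rwfs}) rather than from small-$t$ behavior of $h$. Controlling this estimator hinges on the new upper bound of Proposition~\ref{p:newbound} and the resulting discontinuity of $h_{\em}$ at $\rwfs$ (Corollary~\ref{c:discontinuity}); without that discontinuity the crossing time is not Hausdorff-stable. The subsequent case analysis (Steps 1--3 of Theorem~\ref{th: upper glob reach}), distinguishing whether $\widehat R_\ell\lessgtr\widehat R_{\mathrm{wfs}}$ and whether the gap $R_\ell-\rwfs$ is large relative to $\epsilon$, is a genuine piece of the argument and cannot be folded into a H\"older bound for one functional.
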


We also provide a lower bound for the minimax convergence rate. In case $k=3,4$, our estimators are almost optimal, with a gap given by a $\log(n)$ factor.

\begin{mainthm}\label{main2}
	For certain values of the regularity parameters (depending only on $d$ and $k$), then
	$$\inf_{\widehat{R}}\sup_{P \in \mathscr{P}^k} \mathbf{E}_{P^{\otimes n}} \big[\big| \widehat{R} -R\big|\big] \geq c n^{-(k-2)/d},$$
	where the infimum is taken over all the estimators
	$\widehat{R} = \widehat{R}(X_1 ,\ldots ,X_n )$ and $c>0$ depends on $d$, $k$ and the regularity parameters.
\end{mainthm}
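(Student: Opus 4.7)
The plan is to apply Le Cam's two-point method with a bump construction that saturates the $\curlyc^k$ regularity constraint. We select two submanifolds $\em_0, \em_1$ whose associated distributions $P_0, P_1$ lie in $\mathscr{P}^k$ and satisfy, for a scale $h > 0$,
\[
R(\em_0) - R(\em_1) \;\geq\; \Delta, \qquad \|P_0 - P_1\|_{\TV} \;\leq\; Ch^d.
\]
The tensor bound $\|P_0^{\otimes n} - P_1^{\otimes n}\|_{\TV} \leq n\|P_0-P_1\|_{\TV}$ together with the choice $h = c_0 n^{-1/d}$ for $c_0$ small keeps this product TV distance at most $1/2$, so Le Cam's inequality delivers
\[
\inf_{\widehat R}\sup_{P \in \mathscr{P}^k}\mathbf{E}_{P^{\otimes n}}\bigl[|\widehat R - R|\bigr] \;\geq\; \tfrac{\Delta}{2}\bigl(1 - \|P_0^{\otimes n} - P_1^{\otimes n}\|_{\TV}\bigr) \;\gtrsim\; \Delta.
\]
It thus suffices to engineer a construction with $\Delta \gtrsim h^{k-2}$.

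For $\em_0$, take a round $d$-sphere of radius $R_0$ in $\rrr^D$, with $R_0$ chosen so that every regularity parameter defining $\mathscr{P}^k$ is met with slack; in particular $R(\em_0)=R_0$ since all principal curvatures equal $1/R_0$ and the nearest bottleneck sits at distance $2R_0$. In a tangent chart around a point $p_0 \in \em_0$, write $\em_0$ as the graph of a smooth function $f_0$ over $T_{p_0}\em_0$, and let $\em_1$ be the graph of $f_0+\psi_h$ with
\[
\psi_h(x) \;=\; h^{k}\,\phi(x/h),
\]
where $\phi$ is a fixed smooth compactly supported bump satisfying $\partial^2_{11}\phi(0)>0$. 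Outside $\mathrm{supp}(\psi_h)$ the two manifolds coincide, and a direct computation gives $\|\psi_h\|_{\curlyc^k}=O(1)$ and $\partial^2_{11}\psi_h(0)=c\,h^{k-2}$, so $\em_1$ remains within the regularity constraints of $\mathscr{P}^k$ for $h$ small enough. At the point of $\em_1$ above $0$ one principal normal curvature becomes $1/R_0+c\,h^{k-2}$, while the rest stay at $1/R_0+O(h^{k-1})$, so
\[
R(\em_1) \;\leq\; \bigl(1/R_0+c\,h^{k-2}\bigr)^{-1} \;\leq\; R_0-c'\,h^{k-2}.
\]
For the measures, take $P_0, P_1$ to be smooth positive densities on $\em_0, \em_1$, built by pushing a common reference density on the tangent chart through the two respective graph maps and extending identically on the rest of the sphere. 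The two densities then differ only on a set of $d$-volume $\asymp h^d$, which yields $\|P_0-P_1\|_{\TV}\leq Ch^d$. Plugging $\Delta\geq c'h^{k-2}$ and $h\asymp n^{-1/d}$ into the two-point inequality gives the advertised $n^{-(k-2)/d}$ rate.

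The main obstacle is confirming that the bump truly reduces the reach by the asserted $h^{k-2}$ amount rather than leaving it unchanged by some competing geometric effect. The reach of $\em_1$ equals the minimum of the reciprocal of its maximal principal curvature and any global bottleneck distance; the curvature contribution drops cleanly to $R_0-c'h^{k-2}$ at the bump, but a priori new bottlenecks could emerge either locally, between two flanks of the bump, or globally, between the bump tip and a far-away part of the sphere. Since the bump has height $O(h^{k})$ with $k\geq 3$, any resulting modification of the bottleneck distance is $O(h^{k})\ll h^{k-2}$, so the curvature term should remain binding; turning this heuristic into a uniform inequality controlling $R(\em_1)$ for all sufficiently small $h$ is the delicate point, and the quantitative properties of the convexity defect function developed in the upper-bound sections are precisely the tool needed to certify it.
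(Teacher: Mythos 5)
Your proposal follows essentially the same strategy as the paper: Le Cam's two-point method applied to a round $d$-sphere and a bump perturbation of it, with bump scale $h\asymp n^{-1/d}$ and amplitude $h^k$ so as to saturate the $\curlyc^k$ constraint, giving $\TV = O(h^d)$ and a reach gap of order $h^{k-2}$. The paper's construction (a perturbation $\Phi_\gamma(z)=z+\gamma^k\Psi(z/\gamma)e_{d+1}$ applied to the sphere centered at $-re_{d+1}$) is the same in spirit. However, two points in your write-up need attention.

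First, the sign of the bump is wrong. Near the top of the sphere the graph function $f_0$ has $\partial^2_{11}f_0(0) = -1/R_0$. If $\partial^2_{11}\phi(0)>0$ as you require, then $\partial^2_{11}(f_0+\psi_h)(0) = -1/R_0 + c\,h^{k-2}$, whose absolute value is \emph{smaller} than $1/R_0$. This makes the principal curvature at the bump tip strictly less than $1/R_0$, so the pointwise local reach there increases rather than decreases. Since $R_\ell$ is an infimum over the whole manifold, and the rest of the sphere still has curvature exactly $1/R_0$, you would get $R_\ell(\em_1)=R_0=R_\ell(\em_0)$ and hence no gap at all. You need $\partial^2_{11}\phi(0)<0$ (the paper imposes $\psi''(0)<0$) so that the perturbed second derivative has larger absolute value, pushing the infimum down to $R_0-c'h^{k-2}$.

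Second, you identify the bottleneck control as a gap and propose to close it via the convexity defect function. In fact the paper handles this with a short direct geometric argument, not the convexity defect machinery: for a bottleneck pair $(x,y)$ of $M_\gamma$, either both points lie outside the small ball $B(0,\gamma)$ (in which case the manifolds coincide and $d(x,y)=2r$), or only one does, in which case the chord passes through the unperturbed sphere at a point $z$ where $[z,y]$ is a diameter, so $d(x,y)\geq 2r$. (Both cannot lie inside $B(0,\gamma)$ because there $M_\gamma$ is a graph.) Thus $\rwfs(M_\gamma)\geq r$ uniformly in $\gamma$, and the curvature term is binding. You should also verify that the perturbed manifold stays in $\ckrl$ and that the pushed-forward density stays in $[f_{\min},f_{\max}]$; the paper does this by invoking a stability result of Aamari--Levrard (Proposition A.5 of their supplement) for the class membership, and by bounding the Jacobian of the perturbation for the density.
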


These results are of an entirely theoretical nature. The question of practical implementation remains, although it is not of primary interest for the paper. Starting from a point cloud $X_1,\ldots, X_n$, one may implement the following protocol: 
\begin{itemize}
\item Estimate $\em$ from the data  $X_1,\ldots, X_n$ by the best available manifold reconstruction method $\widehat{\em}$, or, indeed, by any other method.
\item Compute $h_{\widehat{\em}}$ (Definition \ref{def: convexity defect function}) and derive $\widehat R$ thanks to Definition \ref{def: final est}.
\end{itemize}
The only inputs are $\widehat{\em}$ and the regularity parameters that define the class $\mathscr{P}^k$. It is a common practice in statistics to assume some prior knowledge of the class in order to constrain the problem. However, the quantities $C_{d,k,R_{\min}}$ and $C$ in Theorem \ref{th: upper glob reach} are unknown, which creates  difficulties in deriving the accuracy of the estimator $\widehat R$ and, for example, calculating a confidence interval. This is common to every minimax result and could in practice be treated by estimating the variance of the estimator via any conventional computational method such as the bootstrap \cite{Boot}. A more detailed discussion lies outside the scope of the present paper.

\subsection{Organization of the paper}\hfill

The paper is divided into two halves: a first half that is mainly geometric in flavor and a second half which employs mainly statistical techniques.  To that end Sections \ref{sec: geo of the reach}, \ref{geomframework} and \ref{s:cdf} describe the geometric setting of this paper in some detail, Section \ref{sec: esti} discusses the approximation of the reach in a deterministic setting, while Sections \ref{sec: upperbounds} and \ref{sec: lower bounds} are devoted to showing that the new algorithm proposed to estimate the reach achieves the rates stated in Theorem \ref{main1} and to the proof of the lower bound for the minimax rate stated in Theorem \ref{main2}.
\smallskip

Section \ref{sec: geo of the reach}:  We elaborate on the geometry of the reach. We recall a dichotomy due to Aamari, Kim {\it et al.} \cite{Reach} in Theorem \ref{reachdichotomy} and we study in particular the distinction between {\it global reach} or {\it weak feature size} in Definition \ref{def: glob reach} and the {\it local reach} in Definition \ref{def: loc reach}, according to the terminology of \cite{Reach}. This is not apparent in the classical Definition \ref{def: reach fed} of Federer.
\smallskip

Section \ref{geomframework}:  A geometrical framework is given for studying reach estimation. We describe precisely 
a class $\ckrl$ of submanifolds, following Aamari and Levrard \cite{AL}. Manifolds $\em$ in this class admit a local parametrization at all points $p \in \em$ by the tangent space $T_p\em$, which is the inverse of the projection to the tangent space and satisfies certain $\curlyc^k$ bounds. 
\smallskip

Section \ref{s:cdf}:  This section is devoted to the study of the {\it convexity defect function} $h_\em$ of $\em$ as introduced in \cite{ALS} and its properties. We show how the local reach can be calculated from the values of $h_\em$ near the origin in Proposition \ref{taylorpoly} and how the weak feature size (the global reach) appears as a discontinuity point of $h_\em$ whenever it is smaller than the local reach. This is done by proving an upper bound on $h_\em$ in Proposition \ref{p:newbound}. Proposition \ref{taylorpoly} and \ref{p:newbound} are central to the results of the paper.
\smallskip

Section \ref{sec: esti}:  When we attempt to estimate the reach in later sections, we will not know $\em$ exactly. Instead, we will know it up to some statistical error coming from an estimator. Propositions \ref{localloss} and \ref{globalloss} give approximations of the local reach and the weak feature size, respectively, calculated from some proxy $\widetilde \em$. The errors of the approximations are given in terms of the Hausdorff distance $H(\em,\widetilde \em)$. 
\smallskip

Section \ref{sec: upperbounds}:  Building on the definitions in Section \ref{geomframework}, a statistical framework is described within which we study reach estimation in a minimax setting. This defines a class $\mathscr{P}^k$ of admissible distributions $P$ over their support $\em$, the submanifold of interest, which belongs to the class $\ckrl$. To apply the results of the previous section, we may use the Aamari--Levrard estimator \cite{AL} $\hatem$ of $\em$ from a sample $(X_1,\ldots, X_n)$ as the proxy $\widetilde \em$ for $\em$. This estimator is almost optimal over the class $\mathscr{P}^k$.  This yields estimators of the local reach and finally of the reach $R(\em)$ in Section \ref{sec: upperbounds}.  We then prove the upper bounds announced in Theorem \ref{main1} above in Theorems \ref{upper local condition}--\ref{th: upper glob reach}. 
\smallskip

Section \ref{sec: lower bounds}: Using the classical Le Cam testing argument we obtain minimax lower bounds as announced in Theorem \ref{main2}.

\section{Geometry of the reach} \label{sec: geo of the reach}

The reach of a submanifold $\em$, which we will denote by $R(\em)$, or simply $R$, is an unusual invariant. Definition \ref{def: reach fed} conceals  what is almost a dichotomy -- the reach of a submanifold can be realised in two very different ways. This is made precise by the following result.

\begin{theorem}\cite[Theorem 3.4]{Reach} \label{reachdichotomy} Let $\em \subseteq \rrr^D$ be a compact submanifold with reach $R(\em) > 0$. At
least one of the following two assertions holds.
\begin{itemize}
	\item \textup{(Global case)} $\em$ has a bottleneck, that is, there exist $q_1, q_2 \in \em$ such that $(q_1 + q_2)/2 \in Med(\em)$ and $\left \| q_1 - q_2 \right \| = 2 R(\em)$. 
	\item \textup{(Local case)} There exists $q_0 \in \em$ and an arc-length parametrized geodesic
	$\gamma$ such that $\gamma(0)=q_0$ and $\left \| \gamma''(0) \right \|  = 1/R(\em)$.
\end{itemize}
\end{theorem}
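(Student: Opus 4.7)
The plan is to combine Federer's alternative characterization of the reach (Theorem 4.18 of \cite{federer}),
$$R(\em) \;=\; \inf_{p \neq q \in \em} \frac{\|q - p\|^2}{2\, \dist(q - p,\, T_p\em)},$$
with a compactness argument. Select a sequence $(p_n, q_n)$ of distinct pairs in $\em \times \em$ whose infimand converges to $R(\em)$, and, using compactness of $\em$, extract a subsequence such that $p_n \to p_\infty$ and $q_n \to q_\infty$. The entire dichotomy is driven by whether or not these two accumulation points coincide.

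In the non-coalescing case $p_\infty \neq q_\infty$, the infimand is continuous on a neighborhood of $(p_\infty, q_\infty)$, so the infimum is attained there. Since the infimand is symmetric upon swapping $p$ and $q$, writing the stationarity conditions in both slots forces $q_\infty - p_\infty$ to be orthogonal to both $T_{p_\infty}\em$ and $T_{q_\infty}\em$. A direct computation then shows that the open ball of radius $R(\em)$ centered at the midpoint $m = (p_\infty + q_\infty)/2$ is tangent to $\em$ at both points, which, combined with Federer's formula, forces $\|q_\infty - p_\infty\| = 2R(\em)$ and excludes any other point of $\em$ from the interior of that ball. Hence $m$ has two nearest points on $\em$ and lies in $Med(\em)$, giving the global (bottleneck) case.

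In the coalescing case $p_\infty = q_\infty = q_0$, pass to a further subsequence to obtain a unit vector $u = \lim (q_n - p_n)/\|q_n - p_n\|$, which necessarily lies in $T_{q_0}\em$ since both endpoints do. Writing $\em$ locally near $q_0$ as a $\curlyc^2$ graph over $T_{q_0}\em$ and performing a second-order Taylor expansion gives
$$\frac{\|q_n - p_n\|^2}{2\, \dist(q_n - p_n,\, T_{p_n}\em)} \;=\; \frac{1}{\|\Pi_{p_n}(u_n, u_n)\|} + o(1),$$
where $u_n \in T_{p_n}\em$ is the unit vector tracking the direction of $q_n - p_n$. Letting $n \to \infty$ yields $\|\Pi_{q_0}(u, u)\| = 1/R(\em)$. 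The arc-length geodesic $\gamma$ with $\gamma(0) = q_0$ and $\gamma'(0) = u$ then satisfies $\gamma''(0) = \Pi_{q_0}(u, u)$, because the acceleration of a geodesic is normal to $\em$ and equals the second fundamental form evaluated on its velocity, so $\|\gamma''(0)\| = 1/R(\em)$.

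The main obstacle is the non-coalescing case: converting the orthogonality relations at $(p_\infty, q_\infty)$ into the full bottleneck conclusion requires using Federer's characterization itself to exclude any point of $\em$ from the interior of the open ball of radius $R(\em)$ around $m$, since any such interior point would furnish a pair with infimand strictly smaller than $R(\em)$, contradicting the definition of the reach. The coalescing case is more routine, but the Taylor expansion must be controlled uniformly as $(p_n, q_n) \to (q_0, q_0)$, which is why the $\curlyc^2$ graph parametrization of $\em$ over its tangent space is invoked.
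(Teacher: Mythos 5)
There is a genuine gap in the non-coalescing case, and it stems from the claim that the Federer quotient is symmetric under swapping $p$ and $q$. It is not: the quantity
$\frac{\|q-p\|^2}{2\,\dist(q-p, T_p\em)}$
depends on the tangent space at $p$ only, and there is no reason for $\dist(q-p,T_p\em)$ to coincide with $\dist(q-p,T_q\em)$. Consequently, ``writing the stationarity conditions in both slots via symmetry'' does not produce the claimed orthogonality relations.

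What stationarity actually gives, when the minimum is attained at $(p_\infty, q_\infty)$ with $p_\infty \neq q_\infty$, is the following. Write $u = q_\infty - p_\infty = u_T + u_N$ (tangential and normal parts at $p_\infty$) and let $\hat w = u_N/\|u_N\|$, $c = p_\infty + R\hat w$. Stationarity in the $q$-slot yields $c - q_\infty \perp T_{q_\infty}\em$, i.e.\ the tangent ball $B(c,R)$ is tangent to $\em$ at $q_\infty$ as well. But stationarity in the $p$-slot gives, for all $v \in T_{p_\infty}\em$, the identity
$\langle u_T, v\rangle = R\,\langle \hat w, \Pi_{p_\infty}(v, u_T)\rangle$,
which says that $u_T$ is an eigenvector of the Weingarten map in direction $\hat w$ with eigenvalue $1/R$ whenever $u_T \neq 0$. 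Thus $u_T \neq 0$ is perfectly consistent with stationarity; it simply forces $\|\Pi_{p_\infty}\|_{\op} \geq 1/R$, and combined with the universal bound $\|\Pi\|_{\op} \leq 1/R(\em)$ this gives the \emph{local} case, not the global one. Only when $u_T = 0$ do you get $\|q_\infty - p_\infty\| = 2R$ and $c = (p_\infty + q_\infty)/2 \in Med(\em)$. So the dichotomy still holds, but the non-coalescing branch requires this extra case distinction; your argument as written derives the bottleneck unconditionally, which is not justified. The coalescing case is handled correctly. Note also that the paper gives no proof of this statement — it is cited verbatim from Aamari, Kim et al.\ (Theorem 3.4) — so the only comparison available is with that reference, which uses the same compactness/Federer-quotient route but treats the non-coalescing case with the care indicated above.
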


Here, $Med(\em)$ is the medial axis of $\em$, that subset of $\bbR^D$ on which the nearest point projection on $\em$ is ill-defined, namely
$$
Med(\em) = \{z\in\bbR^D ~~|~~\exists\, p,q \in \em,~p \neq q,~d(z,p) = d(z,q) = d(z,\em)\}.
$$

We say that the result above is only `almost' a dichotomy because it is possible for both conditions to hold simultaneously. The curve $\gamma$ could be one half of a circle with radius $R(\em)$ joining $q_1$ and $q_2$, for example, in which case the term `bottleneck' might be considered a misnomer, or the points $q_1$ and $q_2$ might not lie on $\gamma$ at all, so that the two assertions hold completely independently.

This situation invites us to consider two separate invariants. One, the {\it weak feature size}, $\rwfs$, is a widely studied invariant encoding large scale information such as bottlenecks. The second, which we will call the {\it local reach}, $R_{\ell}$, following \cite{Reach}, will encode curvature information.  Theorem \ref{reachdichotomy} states that the minimum of these two invariants is the reach, $$R = \min \left\lbrace R_{\ell}, \rwfs \right \rbrace.$$ 

Note that, in Riemannian geometry, the local reach is referred to as the {\it focal radius of $\em$}, while the reach itself is often referred to as the {\it normal injectivity radius of $\em$}. 

\subsection{The weak feature size}

The weak feature size is defined in terms of critical points of the distance function from $\em$ (in the sense of Grove and Shiohama; see for instance \cite{grove}, p.\ 360).

Consider the function, $d_{\em}: \rrr^D \to \rrr$ defined by $d_{\em} (y) = \inf_{p \in \em} \|y - p\|.$  Note that $\em = d_{\em}^{-1}(0)$.  Following \cite{ALS}, let $\Gamma_{\em}(y) = \{x \in \em: d_{\em}(y,\em) = \| x - y \| \}$, i.e., those $x$ in $\em$ realizing the distance between $y$ and $\em$.  Then we define a generalized gradient as
$$
\nabla_{\em}(y) := \frac{y - \text{Center}(\Gamma_{\em}(y))}{d_{\em}(y, \em)},
$$
where $\text{Center}(\sigma)$ is defined as the center of the smallest (Euclidean) ball enclosing the bounded subset $\sigma \subseteq \rrr^D$.  This generalized gradient $\nabla_{\em}$ for $d_{\em}$ coincides with the usual gradient where $d_{\em}$ is differentiable. We say that a point $y \in \rrr^D \setminus \em$ is a \textit{critical point} of $d_{\em}$ if $\nabla_{\em}(y) = \mathbf{0}$.

For example, if $y$ is the midpoint of a chord the endpoints of which meet the submanifold perpendicularly, then from $y$ there are two shortest paths to $\em$ which travel in opposite directions. It follows that $y$ is a critical point.

\begin{definition} \label{def: glob reach}
	Given a submanifold $\em$ of $\rrr^D$ let $\mathcal{C}$ denote the set of critical points of the distance function $d_{\em}$. The \textit{weak feature size}, denoted $\rwfs(\bf M)$ or simply $\rwfs$, is then defined as $\rwfs := \inf \{ d_{\em}(y) \colon y\in \mathcal{C}\}$.
\end{definition}

By Theorem \ref{reachdichotomy}, if the reach is realised globally then the first critical point will be the midpoint of a shortest chord which meets $\em$ perpendicularly at both ends, and so the weak feature size is equal to the reach.

\subsection{The local reach} In the local case, Theorem \ref{reachdichotomy} tells us that the reach is determined by the maximum value of $\| \gamma'' \|$ over all arc-length parametrised geodesics $\gamma$. 
This can be formulated more concisely by considering instead the \textit{second fundamental form}, $\Pi$, which measures how the submanifold $\em$ curves in the ambient Euclidean space $\rrr^D$.  We refer the reader to a standard text in Riemannian geometry such as \cite{docarmo}  for a precise definition of the second fundamental form.  Informally, the second fundamental form is defined as follows. For a pair of vector fields tangent to $\em$, the (Euclidean) derivative of one with respect to the other is not usually tangent to $\em$.  In fact, the tangential component is the Levi--Civita connection of the induced (Riemannian) metric on $\em$.  The normal, or perpendicular, component yields a symmetric, bilinear form, namely, the second fundamental form, denoted by $\Pi_p$.  In particular, if the norm of $\Pi_p$ is small then $\em$ is nearly flat near $p$ and if the norm is large then it is an area of high curvature.

\begin{definition} \label{def: loc reach}
	Given a submanifold $\em$ of $\rrr^D$ let $\Pi_p$ denote the second fundamental form at $p \in \em$. The the \textit{local reach} of $\em$, denoted $R_\ell(\bf M)$ or simply $R_\ell$ is the quantity 
	$$R_{\ell} = \inf_{p \in \em} \Big\{ \frac{1}{\|\Pi_p\|_{\mathrm{op}}}  \Big\}.$$
\end{definition}

We use the term `local reach' here to reflect the fact that this quantity is generated entirely by the local geometry.  In differential geometry literature the local reach is referred to as the \textit{focal radius} of the submanifold.

\section{Geometrical Framework}\label{geomframework}

We define a class of manifolds which are suitable for the task of reach estimation. This class is the same as that considered by Aamari and Levrard \cite{AL} for other problems in minimax geometric inference. The class is that of $\curlyc^k$ submanifolds, but with some additional regularity requirements. These guarantee the existence of a Taylor expansion of the embedding of the submanifold with bounded co-efficients, as well as a uniform lower bound on the reach.

\begin{definition}\label{AL-framework} (see \cite{AL})
	For two fixed natural numbers $d < D$ and for some $k \geq 3$, $\rmin > 0$, and $\mathbf{L} = (L_{\perp}, L_3, \ldots, L_k)$, we let $\ckrl$ 
	denote the set of $d$-dimensional, compact, connected submanifolds $\em$ of $\rrr^D$ such that:
	\begin{itemize}
	\item[(\textit{i})] $R(\em) \geq \rmin$;
	\item[(\textit{ii})] For all $p \in \em$, there exists a local one-to-one parametrization $\psi_p$ of the form:
	\begin{align*}
	\psi_p \co &B_{T_p \em} (0,r) \subseteq T_p \em \to \em,\\
	&v \mapsto p + v + \mathbf{N}_p(v)
	\end{align*}
		for some $r \geq \frac{1}{4 L_{\perp}}$, with
	$\mathbf{N}_p \in \curlyc^k (B_{T_p \em} (0,r), \rrr^D)$ such that
	$$\mathbf{N}_p(0) = 0, \quad \diff _0 \mathbf{N}_p = 0, \quad \left\| \diff ^2_v \mathbf{N}_p \right\|_{\mathrm{op}} \leq L_{\perp},$$
	for all $\left\| v \right\| \leq \frac{1}{4L_{\perp}}$;
	\item[(\textit{iii})] The differentials $\diff ^i_v \mathbf{N}_p$ satisfy
	$\left\| \diff ^i_v \mathbf{N}_p \right\|_{\mathrm{op}} \leq L_i$ for all $3 \leq i \leq k$ and $\left\| v \right\| \leq \frac{1}{4L_{\perp}}$.
	\end{itemize}
\end{definition}

We define subclasses of $\ckrl$ as follows, using the gap $R_{\ell} - R_{\mathrm{wfs}}$ between the weak feature size and the local reach. For fixed values of $\Rm$ and $\bL$, 
we define
$$\cM^k_0 = \big\{\em \in \ckrl|~ R_{\mathrm{wfs}}(\em) \geq R_{\ell}(\em) \big\}$$
and 
$$\cM^k_\alpha = \big\{\em \in \ckrl~|~R_{\mathrm{wfs}}(\em) \leq R_{\ell}(\em) - \alpha\big\},\;\;\alpha >0.$$
Note that 
$$\ckrl = \cup_{\alpha \geq 0} \cM^k_\alpha.$$

Manifolds in $\ckrl$ admit a second parametrization, one that represents the manifold locally as the graph of a function over the tangent space so that the first non-zero term in the Taylor expansion is of degree two and is given by the second fundamental form. These parametrizations in general satisfy weaker bounds than $\bL$. The degree $k$ Taylor polynomial then gives an algebraic approximation of the manifold, which will be very useful in later calculations.  The following lemma from \cite{AL} describes the Taylor expansion of a local parametrization at every point $p\in \em$.

\begin{lemma}\label{l:AL-param}\cite[Lemma 2]{AL} \label{lem: Wass et al}
	Let $k \geq 3$, $\em \in \ckrl$ and $r=\tfrac{1}{4}\min\lbrace \rmin, L_{\perp}^{-1}\rbrace$. Then for all $p \in \em$ there is a local one-to-one parametrization around $p$, $\Phi_p : U \to \em$, for some $U \subset T_p \em$, which contains $B(p,r) \cap \em$ in its image, satisfies $\pr_{T_p \em} \circ \Phi_p (v) = v$ on its domain, and takes the form
	\begin{align*}
	\Phi_p(v) = p + v + \frac{1}{2} T_2(v^{\otimes 2}) + \frac16 T_3(v^{\otimes 3})+\ldots+\frac{1}{(k-1)!} T_{k-1}(v^{\otimes (k-1)})+\mathcal{R}_k(v),
	\end{align*}
	where $\|\mathcal{R}_k(v)\|\leq C \|v\|^k$. Furthermore $T_2 = \Pi_p$ and $\|T_i\|_{\op} \leq L'_i$, where $L'_i$ and $C$ depends on $d$, $k$, $\rmin$ and $\mathbf{L}$, and the terms $T_2,\ldots, T_{k-1}, \mathcal{R}_k$ are all normal to $T_p \em$.
\end{lemma}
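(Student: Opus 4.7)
The plan is to construct $\Phi_p$ as a reparametrization of $\psi_p$ from Definition \ref{AL-framework}(\textit{ii}) that puts the image of $\em$ in graph form over $T_p\em$, and then to read off its Taylor expansion. Let $\pi^T$ and $\pi^\perp$ denote the orthogonal projections from $\rrr^D$ onto $T_p\em$ and its orthogonal complement, and split $\mathbf{N}_p = \pi^T\mathbf{N}_p + \pi^\perp\mathbf{N}_p$. Consider the tangential auxiliary map
$$\phi(v) := v + \pi^T\mathbf{N}_p(v) \co B_{T_p\em}(0,(4L_\perp)^{-1}) \longrightarrow T_p\em.$$
Since $\diff_0\phi = \id$ (from $\diff_0\mathbf{N}_p = 0$) and $\|\diff^2\phi\|_{\op} \leq L_\perp$, a quantitative inverse function theorem provides a $\curlyc^k$ local inverse $\phi^{-1}$ defined on a ball $U \subseteq T_p\em$ of radius comparable to $\min\{\Rm, L_\perp^{-1}\}$.

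I then define $\Phi_p := \psi_p \circ \phi^{-1}$, so that
$$\Phi_p(w) = p + w + \pi^\perp\mathbf{N}_p\bigl(\phi^{-1}(w)\bigr)$$
on $U$, which gives immediately the projection property $\pr_{T_p\em}\circ\Phi_p(w) = w$. A direct Lipschitz estimate on $\psi_p - \id$ shows that the image of $\Phi_p$ covers $B(p,r) \cap \em$ for $r = \tfrac{1}{4}\min\{\Rm, L_\perp^{-1}\}$. By construction the map $F(w) := \pi^\perp\mathbf{N}_p(\phi^{-1}(w))$ takes values in $(T_p\em)^\perp$, so every Taylor coefficient $T_i$ and the remainder $\mathcal{R}_k$ are automatically normal to $T_p\em$. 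The identification $T_2 = \Pi_p$ follows because $\diff_0\phi^{-1} = \id$ and $\mathbf{N}_p$ has no linear part: thus $T_2 = \diff^2_0 F = \diff^2_0(\pi^\perp\mathbf{N}_p)$, which is the standard expression for the second fundamental form at $p$.

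For $3 \leq i \leq k-1$, Faà di Bruno's formula applied to the composition $F = (\pi^\perp\mathbf{N}_p)\circ\phi^{-1}$ expresses $\diff^i_0 F$ as a universal polynomial in the derivatives $\diff^j_0 \mathbf{N}_p$ and $\diff^j_0 \phi^{-1}$ for $j \leq i$. The higher derivatives of $\phi^{-1}$ at $0$ are in turn obtained by repeatedly differentiating the identity $\phi\circ\phi^{-1} = \id$ at $0$, and they depend polynomially on $\diff^j_0\mathbf{N}_p$, $j \leq i$. Substituting the bounds from Definition \ref{AL-framework}(\textit{iii}) yields $\|T_i\|_{\op} \leq L'_i$ with $L'_i$ depending only on $d, k, \Rm, \mathbf{L}$. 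The remainder is controlled by Taylor's theorem with integral remainder, using a uniform bound on $\|\diff^k F\|_{\op}$ over $U$ obtained by the same Faà di Bruno mechanism, delivering $\|\mathcal{R}_k(v)\| \leq C \|v\|^k$.

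The main obstacle, and the only step not reducible to combinatorial bookkeeping, is the quantitative inverse function theorem for $\phi$: producing $\phi^{-1}$ on a ball of radius comparable to $\min\{\Rm, L_\perp^{-1}\}$ and verifying that $\Phi_p(U)$ contains $B(p,r)\cap\em$, uniformly in $p\in\em$. This can be handled by a Banach fixed-point argument with explicit constants driven by the bound $\|\diff^2\mathbf{N}_p\|_{\op}\leq L_\perp$ from Definition \ref{AL-framework}(\textit{ii}); once the domain $U$ is secured, everything else follows by direct differentiation and the combinatorics of Faà di Bruno.
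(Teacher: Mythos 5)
The paper does not prove this result---it is cited verbatim from Aamari and Levrard \cite[Lemma 2]{AL}---so there is no in-paper argument to compare against. Your reconstruction follows the natural route: write $\phi(v)=v+\pr_{T_p\em}\mathbf{N}_p(v)$ for the tangential part of $\psi_p-p$, invert $\phi$ by a quantitative inverse function theorem, and set $\Phi_p=\psi_p\circ\phi^{-1}$, which is automatically in graph form with purely normal higher-order terms. The identification $T_2=\Pi_p$, the Fa\`a di Bruno bookkeeping giving $\|T_i\|_{\op}\leq L_i'$, and the Taylor remainder estimate are all handled correctly and in essentially the only reasonable way.

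The one place the sketch is too optimistic is the covering claim $\Phi_p(U)\supseteq B(p,r)\cap\em$. You attribute this to a Lipschitz estimate on $\psi_p-\id$ together with a Banach fixed-point argument ``driven by the bound $\|\diff^2\mathbf{N}_p\|_{\op}\leq L_\perp$.'' Those ingredients show that $\psi_p$ restricted to a ball of radius comparable to $L_\perp^{-1}$ is a bi-Lipschitz graph over $T_p\em$, but they do not rule out the possibility that another sheet of $\em$, disjoint from the image of $\psi_p$, enters the ambient ball $B(p,r)$: the local chart $\psi_p$ sees only one component of $B(p,r)\cap\em$. Excluding this self-proximity scenario is exactly where the reach lower bound $R(\em)\geq\rmin$ must enter (and is why $\rmin$ appears in the definition of $r$), typically via an ambient-to-geodesic distance comparison of Niyogi--Smale--Weinberger type. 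With that input added to the fixed-point step, the argument is complete and consistent with what one expects the proof in \cite{AL} to contain.
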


\begin{definition}\label{d:degkapprox}
We call the degree $j$ truncation of the parametrization  $\Phi_p$ given in Lemma \ref{lem: Wass et al} the {\it approximation of degree $j$ to $\em$ around $p$} and write it 
$$\Phi_p^j(v) = p + v + \frac{1}{2} T_2(v^{\otimes 2}) + \frac16 T_3(v^{\otimes 3})+\ldots+\frac{1}{j!}T_j(v^{\otimes j}).$$
\end{definition}

\section{Convexity defect functions}\label{s:cdf}

The convexity defect function, originally introduced by  Attali, Lieutier and Salinas \cite{ALS}, measures how far a subset $\ex \subseteq \rrr^D$ is from being convex at scale $t$. The goal of this section is to establish a relationship between the convexity defect function and the reach.  The definition is valid for any compact subset of $\rrr^D$.  In this section we will principally consider the case of a closed submanifold $\em$ as before, but in the sequel we will need to know that this function can be defined in greater generality.

We recall the definition.
Given a compact subset $\sigma \subseteq \ex$, it is contained in a smallest enclosing closed ball in $\rrr^D$. We define $\rad(\sigma)$ to be the radius of this ball. We denote by $\hull(\sigma)$ the convex hull of $\sigma$ in $\rrr^D$. Then we define the \textit{convex hull of $\ex$ at scale $t$} to be the following subset of $\rrr^D$:
$$
\hull(\ex, t) = \bigcup_{\substack{\sigma\subseteq \ex\\ \rad(\sigma) \leq t}} \hull(\sigma).
$$
For two compact subsets $A$ and $B$ of $\mathbf{R}^D$, we define the asymmetric distance $H(A|B) = \sup_{a \in A} d(a,B)$ so that $H(A,B) = \max \big\lbrace H(A|B), H(B|A) \big\rbrace$ is the symmetric Hausdorff distance. 
\begin{definition} \label{def: convexity defect function}
	Given a compact subset $\ex \subseteq \rrr^D$, we define the \textit{convexity defect function} $h_{\ex}: \rrr_{\geq 0} \rightarrow \rrr_{\geq 0}$ by $h_{\ex}(t) = H(\hull(\ex, t) , \ex) = H(\hull(\ex,t) ~|~\ex)$.
\end{definition}

\begin{figure}[h!]
	\includegraphics[scale = .12]{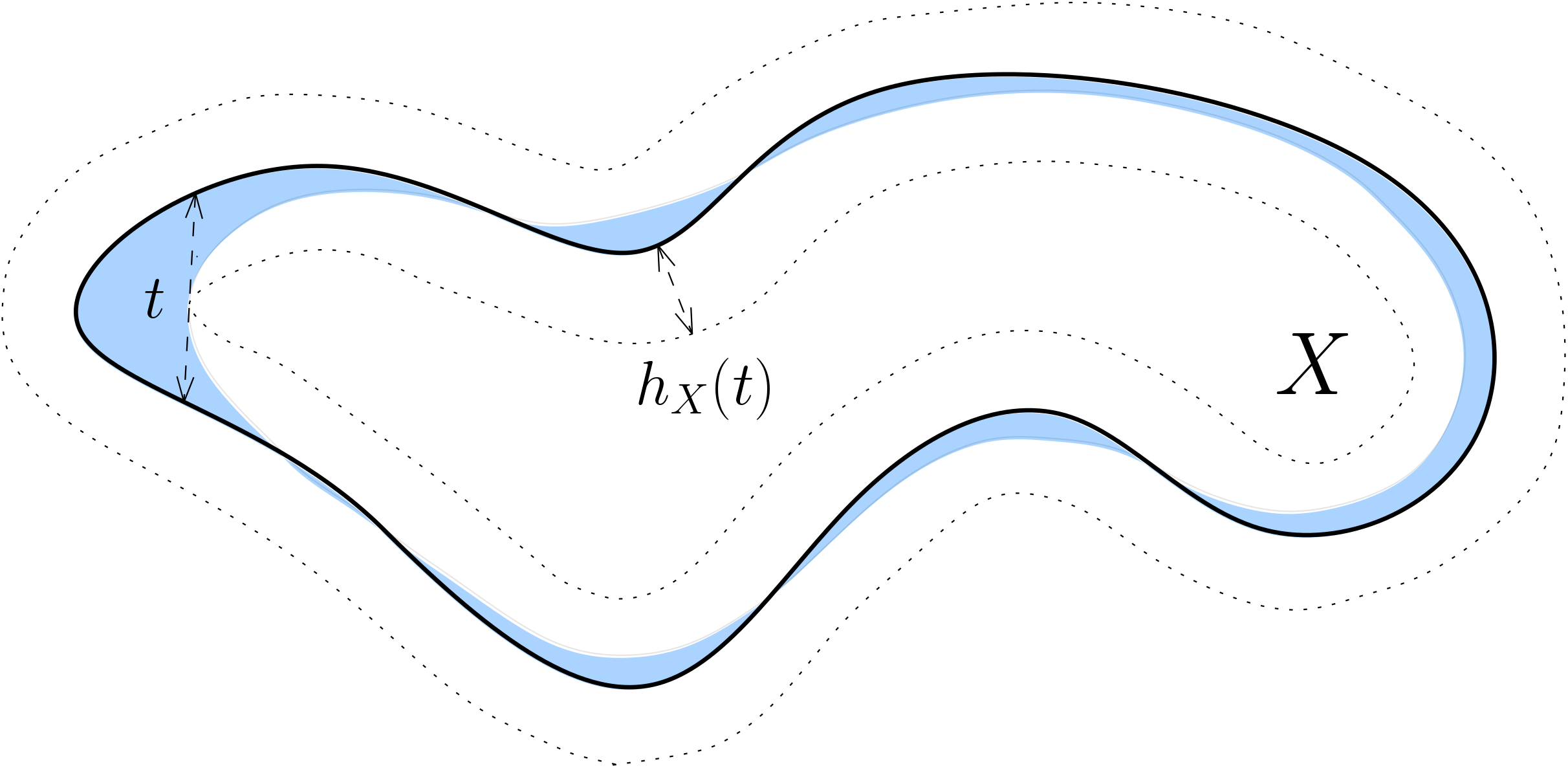}
	\caption{The convex hull at scale $t$, $\hull(\ex,t)$ (in blue), of a curve $\ex$ (in black). Enclosed between the dotted curves is  the minimal tubular neighborhood around $\ex$ that contains $\hull(\ex,t)$ --- its width is the convexity defect function $h_\ex(t)$.}
\end{figure}

 We recall here from \cite{ALS} some useful properties of $h_{\ex}$.

\begin{itemize}
	\item[1.] $h_{\ex}(0) = 0$.
	\smallskip
	
	\item[2.]	$h_{\ex}$ is non-decreasing on the interval $[0, \rad(\ex)]$ and constant thereafter.
	\smallskip
	
	\item[3.] If $\tildex \subseteq \rrr^D$ satisfies $H (\ex, \tildex) < \epsilon$, where $H$ is the Hausdorff distance, then $h_{\tildex}(t-\epsilon) - 2\epsilon \leq h_{\ex}(t) \leq h_{\tildex}(t+\epsilon) + 2\epsilon$ for any $t\geq \epsilon$.
\smallskip
	
	\item[4.] $h_{\ex}(t) \leq t$ for all $t \geq 0$. Moreover, $h_{\ex}(t_0) = t_0$ if and only if $t_0$ is a critical value of the distance function, $d_{\ex}$.
		\smallskip
	
	\item[5.] If the reach, $R = R(\ex) >0$, then on $[0,R)$ the function $h_{\ex}(t)$ is bounded above by a quarter-circle of radius $R$ centered on $(0,R)$. In other words, $h_{\ex}(t) \leq R - \sqrt{R^2 - t^2}$ for $t \in [0,R)$.
\end{itemize}

From item 4 and the definition of the weak feature size in terms of critical points of the distance function, the following proposition is immediate.

\begin{proposition}
	If $\em$ is a submanifold of $\rrr^D$ then $\rwfs = \inf \left\lbrace t >0\colon h_{\em}(t)=t\right\rbrace$.
\end{proposition}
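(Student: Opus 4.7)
The plan is to observe that this proposition really is an immediate reformulation, once one unpacks the two characterizations. I would proceed in two short steps: first translate the set $\{t>0 : h_\em(t) = t\}$ into the language of critical values of $d_\em$ using property 4, and then match this against Definition \ref{def: glob reach}.

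For the first step, property 4 of the convexity defect function says that for any $t_0 \geq 0$, the equality $h_\em(t_0) = t_0$ holds if and only if $t_0$ is a critical value of $d_\em$, that is, $t_0 = d_\em(y)$ for some critical point $y \in \mathcal{C}$. Since critical points are by definition elements of $\rrr^D \setminus \em$, they satisfy $d_\em(y) > 0$, so restricting to $t > 0$ on the left side corresponds exactly to ranging over all of $\mathcal{C}$ on the right side. Hence
$$\{t > 0 : h_\em(t) = t\} = \{d_\em(y) : y \in \mathcal{C}\}.$$

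For the second step, I simply take the infimum of both sides and recall that by Definition \ref{def: glob reach}, the right-hand infimum is precisely $\rwfs$. This yields
$$\inf\{t > 0 : h_\em(t) = t\} = \inf\{d_\em(y) : y \in \mathcal{C}\} = \rwfs,$$
as required. The only point that demands any care is confirming the $t > 0$ restriction is consistent: $h_\em(0) = 0$ by property 1 so the value $t=0$ would otherwise be a trivial solution, but excluding it corresponds naturally to excluding the (non-critical) points of $\em$ itself on the other side. There is no real obstacle here; the substantive content is already encoded in property 4 from \cite{ALS}, and this proposition merely records its consequence in terms of $\rwfs$.
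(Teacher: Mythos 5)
Your proof is correct and takes essentially the same route as the paper, which simply declares the proposition immediate from property 4 and Definition \ref{def: glob reach}; you have merely spelled out that observation, including the harmless $t=0$ bookkeeping.
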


We can also relate the local reach to the convexity defect function with the following proposition, which we will prove in Section \ref{hnearzero}.

\begin{proposition} \label{taylorpoly} Let $k \geq 4$. There exists a constant $C$ (depending on $\rmin, \mathbf{L}, d$ and $k$) such that, for any sufficiently small non-negative real $t$, $t \leq t_{\rmin, \bL,d,k}$, and any $\em \in \ckrl$, we have
	\begin{align*}
	\left|h_{\em}(t) - \frac{t^2}{2R_\ell} \right| \leq Ct^4.
	\end{align*} 
	
In case $k=3$, there exists a constant $C'$ (depending on $\rmin, \mathbf{L}, d$) such that, for any sufficiently small non-negative real $t$, $t \leq t_{\rmin, \bL,d}$, and any $\em \in \ckrl$, we have
\begin{align*}
\left|h_{\em}(t) - \frac{t^2}{2 R_\ell} \right| \leq C't^3.
\end{align*}
\end{proposition}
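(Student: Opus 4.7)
My plan is to establish matching lower and upper bounds on $h_{\em}(t) - \tfrac{t^{2}}{2R_\ell}$.

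For the \emph{lower bound}, I would exploit compactness to choose $p^{*}\in\em$ and a unit vector $e\in T_{p^{*}}\em$ with $\|\Pi_{p^{*}}(e,e)\|=1/R_\ell$. For the symmetric two-point configuration $\sigma=\{\Phi_{p^{*}}(se),\Phi_{p^{*}}(-se)\}$, Lemma~\ref{l:AL-param} gives
\begin{align*}
\tfrac{1}{2}\bigl(\Phi_{p^{*}}(se)+\Phi_{p^{*}}(-se)\bigr) = p^{*}+\tfrac{s^{2}}{2}\Pi_{p^{*}}(e,e) + \rho(s),
\end{align*}
with odd-degree Taylor terms cancelling by symmetry: for $k\geq 4$ the $T_{3}$ contribution vanishes and the remainder yields $\rho(s)=O(s^{4})$, while for $k=3$ the unstructured cubic remainder $\mathcal R_{3}$ gives $\rho(s)=O(s^{3})$. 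The midpoint lies along the normal direction $\Pi_{p^{*}}(e,e)$ and, provided $s$ is smaller than a constant depending on $\rmin$, its nearest point on $\em$ is essentially $p^{*}$ itself, yielding distance $\tfrac{s^{2}}{2R_\ell}+O(s^{4})$ for $k\geq 4$ (respectively $+O(s^{3})$ for $k=3$). Matching $s$ so that $\rad(\sigma)=t$ gives $s=t+O(t^{3})$, and the desired lower bound $h_{\em}(t) \geq \tfrac{t^{2}}{2R_\ell} - Ct^{4}$ (respectively $-C't^{3}$) follows.

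For the \emph{upper bound}, I would fix $\sigma\subseteq\em$ with $\rad(\sigma)\leq t$. For $t$ below a threshold depending on $\rmin$, the smallest enclosing ball of $\sigma$ has center $c^{*}$ with well-defined metric projection $p$ on $\em$, and $\sigma\subseteq B(p,2t)$. Writing $\sigma=\{\Phi_p(v_i)\}$ with $\|v_i\|\leq 2t$, $y=\sum_i\lambda_i\Phi_p(v_i)\in\hull(\sigma)$, and $\bar v=\sum_i\lambda_i v_i$, the algebraic identity $\sum_i\lambda_i v_i^{\otimes 2}-\bar v^{\otimes 2}=\sum_i\lambda_i(v_i-\bar v)^{\otimes 2}$, together with the variance bound $\sum_i\lambda_i\|v_i-\bar v\|^{2}\leq t^{2}$ (from $y$ being the weighted barycentre of the $\Phi_p(v_i)\in B(c^{*},t)$ and the $1$-Lipschitzness of the orthogonal projection onto $T_p\em$), yields
\begin{align*}
\|y-\Phi_p(\bar v)\|\leq \tfrac{1}{2}\|T_{2}|_p\|_{\op}\, t^{2} + (\text{cubic and higher}).
\end{align*}
Since $\|T_{2}|_p\|_{\op}\leq 1/R_\ell$ and $\Phi_p(\bar v)\in\em$, this settles the $k=3$ case once the cubic Taylor remainder $\mathcal R_{3}=O(t^{3})$ is absorbed, giving $h_{\em}(t) \leq \tfrac{t^{2}}{2R_\ell} + C't^{3}$.

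The \textbf{main obstacle} is the upper bound for $k\geq 4$: the Taylor expansion produces a cubic correction involving $T_{3}$ of size $O(t^{3})$, which is too large for the claimed $O(t^{4})$ remainder. To close this gap, one shows that configurations achieving a defect close to $\tfrac{t^{2}}{2R_\ell}$ must concentrate near a point $p^{*}$ maximising $\|\Pi_p\|_{\op}$, and then exploits the extremality of $p^{*}$: the gradient of $\|\Pi_p\|_{\op}$ vanishes there (constraining $T_{3}|_{p^{*}}$ through Codazzi-type identities) while $\|\Pi_p\|_{\op}$ decays quadratically in the distance from $p^{*}$. A careful balancing of the quadratic deficit as $p$ moves away from $p^{*}$ against the linearly growing cubic excess then absorbs the $O(t^{3})$ term into an $O(t^{4})$ remainder.
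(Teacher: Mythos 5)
Your lower-bound sketch is in the same spirit as the paper's: both rest on a symmetric two-point configuration at a point $p^*$ maximising $\|\Pi_{p^*}\|_{\op}$, with the midpoint of the chord pushed off the manifold by roughly $\tfrac{s^2}{2}\Pi_{p^*}(e,e)$ and odd-degree Taylor terms cancelling. The paper routes this through the algebraic proxy $M(S,T)$ (Lemmata \ref{lem4}--\ref{lem5}) and a localised convexity defect $h^{\loc}$ (Lemma \ref{l:ballscenteredonem}), which tidies up the point you gloss over with ``essentially $p^*$ itself,'' but the mechanism is the same. Your direct variance argument for the $k=3$ upper bound is also a reasonable alternative to what the paper does.

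The gap is in the upper bound for $k\geq 4$. You flag ``the Taylor expansion produces a cubic correction involving $T_3$ of size $O(t^3)$'' as the main obstacle, and then propose an elaborate fix using extremality of $p^*$ and Codazzi-type identities. But this obstacle does not exist in the paper's proof, because the upper bound is not derived from a Taylor expansion at all. Proposition \ref{p:newbound} shows, by a purely geometric argument about the reach of intersections $\em\cap B(z,r)$ (Lemmata \ref{l:nobottlenecks}--\ref{l:reachofemprime}), that on $[0,R)$ one has $h_{\em}(t)\leq R_\ell - \sqrt{R_\ell^2-t^2}$. Expanding the right-hand side gives $\tfrac{t^2}{2R_\ell} + \tfrac{t^4}{2R_\ell^3}$ for small $t$, which is already the desired $O(t^4)$ remainder, uniformly over the class and for every $k\geq 3$. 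The quarter-circle bound therefore completely sidesteps the $T_3$ issue. Your proposed fix would, if it could be made rigorous, be a substantially harder route (and I am not convinced the decay-of-$\|\Pi_p\|_{\op}$ step is correct without additional regularity of the extremum, which is not available in a $\mathscr{C}^k$ class); in any case it is unnecessary. You should replace your upper-bound argument with the quarter-circle bound once Proposition \ref{p:newbound} is in hand.
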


We will write, somewhat informally, $$R_{\ell} = 1/h''_{\em}(0).$$ The function $h_{\em}$ is not actually twice differentiable; $h''_{\em}(0)$ here is a `pointwise second derivative'.
Since $R = \min \left\lbrace R_{\ell}, \rwfs \right \rbrace$, these two propositions show how the convexity defect function yields the reach.

Proposition \ref{taylorpoly} will be proven in Section \ref{hnearzero}, but first we need to refine the upper bound given in item 5 of the properties of $h_{{\bf X}}$ given after Definition \ref{def: convexity defect function} for the case where $\ex$ is a submanifold.

\subsection{Upper bounds on the convexity defect function} \label{sec: upp bound defect}

The two aspects of the reach relate to the convexity defect function in quite different ways, which naturally leads one to wonder which aspect of the reach is responsible for item 5 of the properties of $h_{{\bf X}}$ given after Definition \ref{def: convexity defect function}.  In this subsection we improve the upper bound by increasing the radius of the bounding circle from $R$ to $R_{\ell}$, though the bound still only holds on the interval $[0,R)$ (compare with Lemma 12 in \cite{ALS}). See Figure \ref{fig:hxt} for an illustation.

\begin{proposition}\label{p:newbound}
	If $\em \in \ckrl$and $R = R(\em)$ is its reach, then on $[0,R)$ the function $h_{\em}(t)$ is bounded above by a quarter-circle of radius $R_{\ell}$ centered on $(0,R_{\ell})$.  In other words, $h_{\em}(t) \leq R_{\ell} - \sqrt{R_{\ell}^2 - t^2}$.
\end{proposition}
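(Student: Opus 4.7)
The plan follows the structure of the \cite{ALS} proof of the weaker bound $h_\em(t) \leq R - \sqrt{R^2 - t^2}$ (item 5 of the properties listed after Definition \ref{def: convexity defect function}), with the radius $R$ upgraded to $R_\ell$ in the key tangent-ball estimate. Fix $t \in [0, R)$ and $y \in \hull(\em, t)$. Since $d(y, \em) \leq h_\em(t) \leq t < R$ by item 4, the projection $p := \uppi(y)$ is unique. Write $s := \|y - p\|$ (we may assume $s > 0$) and $\hat v := (y - p)/s$. Expressing $y = \sum_i \lambda_i x_i$ as a convex combination of $\sigma := \{x_i\} \subset \em$ with $\rad(\sigma) \leq t$, and letting $c$ be the center of the smallest enclosing ball of $\sigma$, both $\sigma$ and $y$ lie in $\bar B(c, t)$.

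Setting $O_\ell := p + R_\ell \hat v$, the polarization identity gives
\begin{equation*}
(R_\ell - s)^2 = \|y - O_\ell\|^2 = \sum_i \lambda_i \|x_i - O_\ell\|^2 - \sum_i \lambda_i \|x_i - y\|^2,
\end{equation*}
and the second sum is at most $t^2$, since $\sigma, y \subset \bar B(c, t)$ yields $\sum_i \lambda_i \|x_i - y\|^2 = \sum_i \lambda_i \|x_i - c\|^2 - \|y - c\|^2 \leq t^2$. So, provided the key estimate $\sum_i \lambda_i \|x_i - O_\ell\|^2 \geq R_\ell^2$ holds, one obtains $(R_\ell - s)^2 \geq R_\ell^2 - t^2$, and hence $s \leq R_\ell - \sqrt{R_\ell^2 - t^2}$, as desired.

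The key estimate follows from the pointwise bound $\|x_i - O_\ell\| \geq R_\ell$ for every $i$, i.e.\ $\sigma \cap B(O_\ell, R_\ell) = \emptyset$. Establishing this is the main obstacle: for the analogous claim with $R$ in place of $R_\ell$, Federer's inequality $\langle x - p, \hat v\rangle \leq \|x - p\|^2/(2R)$ supplies the disjointness globally on $\em$; but when $R_\ell > R$, the enlarged ball $B(O_\ell, R_\ell)$ can intersect $\em$ near a bottleneck, so no global argument is available. To circumvent this I would exploit the constraint $t < R$: each $x_i$ satisfies $\|x_i - p\| \leq 2t + s \leq 3t < 3R$, confining $\sigma$ to a controlled neighborhood of $p$. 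Within this neighborhood I would appeal to the local parametrization $\Phi_p$ of Lemma \ref{l:AL-param}, write $x_i = \Phi_p(v_i)$ with $v_i \in T_p \em$, and combine $\|\Pi_p\|_{\op} \leq 1/R_\ell$ with the higher-order Taylor coefficient bounds on $T_3, \ldots, T_{k-1}$ to obtain the pointwise inequality. The delicate point is that $\Phi_p$ is only guaranteed on a ball of radius $\tfrac{1}{4}\min\{\rmin, L_\perp^{-1}\}$, which can be smaller than $3t$; for points of $\sigma$ falling outside the chart, one option is to pass to a submanifold-with-boundary $\em' \subset \em$ of diameter $O(t)$ containing $\sigma$, argue that its reach is at least $R_\ell$ (no bottleneck of $\em$ can manifest on a scale smaller than $2R$), and apply item 5 to $\em'$ together with the monotonicity of $r \mapsto r - \sqrt{r^2 - t^2}$.
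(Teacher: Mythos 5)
Your proposal correctly identifies the obstruction — the global Federer inequality $\langle x - p, \hat v\rangle \leq \|x-p\|^2/(2R)$ cannot simply have $R$ replaced by $R_\ell$ when $R_\ell > R$ — and your second fallback (pass to $\em' = \em \cap B(z,r)$ with $r < R$, show $R(\em') \geq R_\ell$, and apply item~5 to $\em'$) is exactly the paper's route. But the crux, namely the inequality $R(\em') \geq R_\ell$, is asserted rather than established. Your parenthetical justification ``no bottleneck of $\em$ can manifest on a scale smaller than $2R$'' shows only that $\em'$ has no bottleneck (this is Lemma~\ref{l:nobottlenecks}); it does not by itself rule out that the reach of $\em'$ is realised in some other way — in particular by boundary effects, since $\em'$ is a manifold-with-boundary, not a closed submanifold, so Theorem~\ref{reachdichotomy} is not directly applicable. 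The paper spends Lemmas~\ref{l:IInorm} and~\ref{l:reachofemprime} on precisely this point: one must verify that $\partial\em'$ is a $\cC^k$ submanifold (using $r < R$ to rule out tangency of $\partial B(z,r)$ to $\em$), compute the tangent cone $C_p\em'$ at boundary points as a half-space, check that the Federer quotient $2d(q-p,C_p\em')/\|q-p\|^2$ still limits to $\|\Pi_p\|_{\op}$ even for $p \in \partial\em'$, and then invoke the dichotomy for sets of positive reach (via \cite[Lemma~3.2]{Reach}, which applies to such sets and not just manifolds) to conclude $1/R(\em') = \sup_{p\in\em'}\|\Pi_p\|_{\op} \leq 1/R_\ell$.

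Your first fallback — the Taylor-expansion argument at a single $p$ — also does not close the gap. To get $\|x_i - O_\ell\| \geq R_\ell$ you need $\|x_i-p\|^2 \geq 2R_\ell\langle x_i-p,\hat v\rangle$; expanding $x_i = \Phi_p(v_i)$ yields $\|x_i-p\|^2 - 2R_\ell\langle x_i-p,\hat v\rangle = (\|v_i\|^2 - R_\ell\langle\Pi_p(v_i,v_i),\hat v\rangle) + O(\|v_i\|^3)$, and while the leading term is $\geq 0$ since $\|\Pi_p\|_{\op}\leq 1/R_\ell$, the $O(\|v_i\|^3)$ correction has no definite sign, so the pointwise inequality is not recoverable from the Taylor coefficients alone at a fixed, non-infinitesimal scale. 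This is why the paper routes the argument through the reach of $\em'$ via Federer's global characterisation rather than through a chart-by-chart estimate.
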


\begin{figure}[h!] 
\begin{minipage}{5in}
	\centering
	  \raisebox{-0.5\height}{\includegraphics[scale = .2]{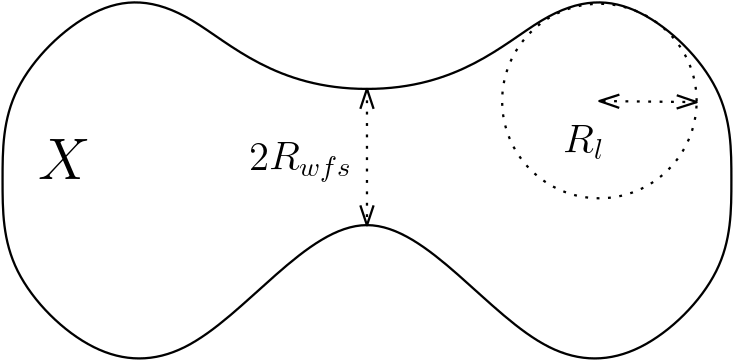}} 
		\hspace{1pt}
	  \raisebox{-0.5\height}{\includegraphics[scale = .115]{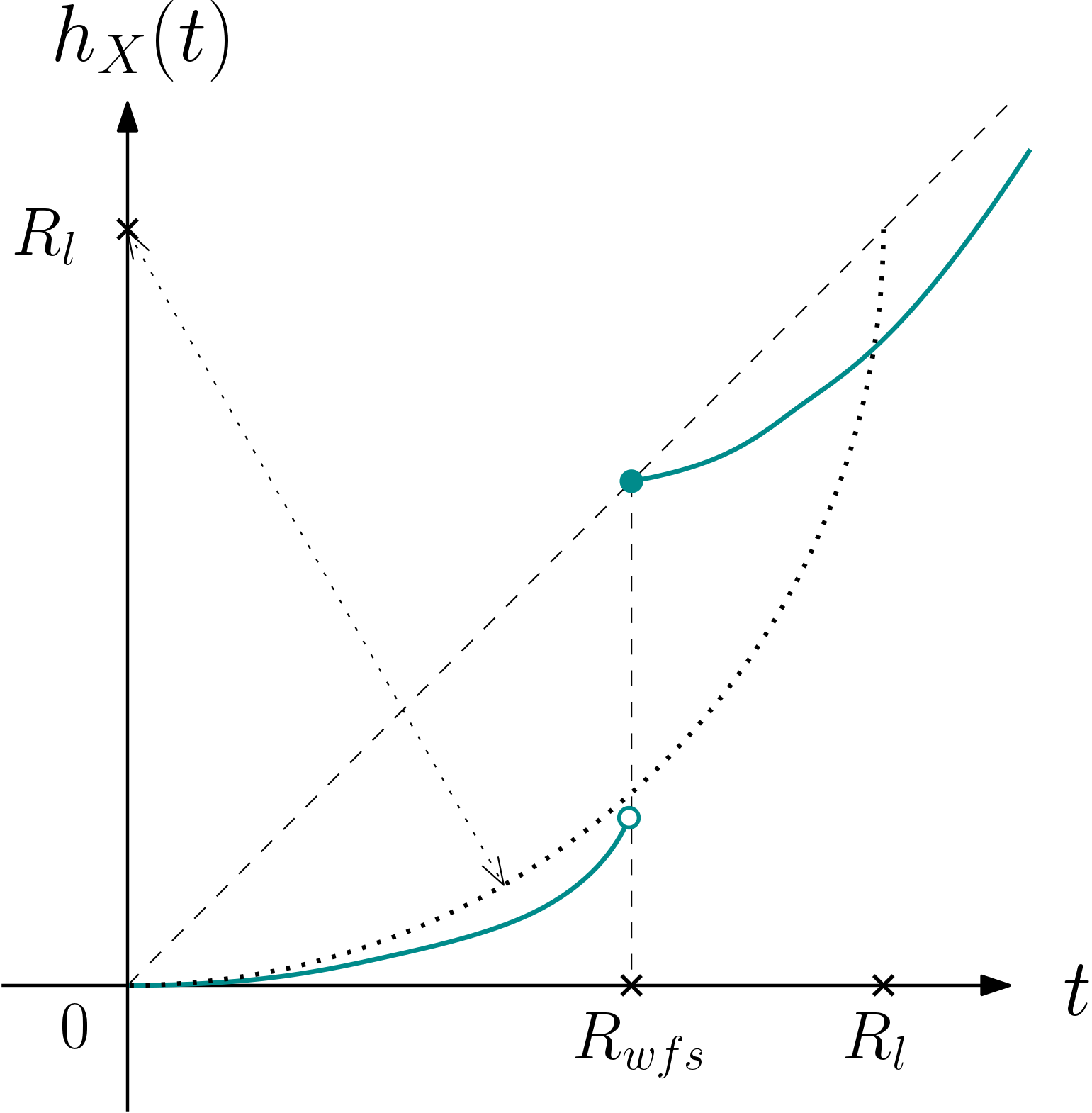}}
\end{minipage}
\caption{A curve $\ex$ (left) and its convexity defect function $h_\ex(t)$ (right), which is below the quarter-circle of radius $R_\ell$ for $t < R(\ex) = \rwfs$. Since $\rwfs < R_\ell$, we observe a discontinuity at $t = \rwfs$.}
\label{fig:hxt}
\end{figure}

For submanifolds in the class $\cM_0^k$ (where $\rwfs \geq R_{\ell}$), this result does not have any content. However, for manifolds in $\cM^k_{\alpha}$ i.e., manifolds for which $\rwfs \leq R_{\ell} - \alpha$ for some $\alpha > 0$, the bound is sharper, with the following consequence.

\begin{corollary}\label{c:discontinuity}
	If $\em \in \cM_\alpha^k$ for some $\alpha > 0$, then  $h_{\em}$ is discontinuous at $R(\em)$.
\end{corollary}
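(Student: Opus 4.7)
\textbf{Proof plan for Corollary \ref{c:discontinuity}.}

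The plan is to show that the left-hand and right-hand limits of $h_\em$ at $R=R(\em)$ disagree. First, since $\em\in\cM_\alpha^k$, we have $\rwfs(\em)\le R_\ell(\em)-\alpha<R_\ell(\em)$, so $R(\em)=\rwfs(\em)$. Because $h_\em$ is non-decreasing on $[0,\rad(\em)]$, both one-sided limits at $R$ exist and it suffices to establish the strict inequality
\begin{equation*}
\lim_{t\to R^-} h_\em(t) \;<\; \lim_{t\to R^+} h_\em(t).
\end{equation*}

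For the left limit I would apply Proposition \ref{p:newbound} directly: for every $t\in[0,R)$, $h_\em(t)\le R_\ell-\sqrt{R_\ell^2-t^2}$, and letting $t\to R^-$ yields
\begin{equation*}
\lim_{t\to R^-} h_\em(t)\;\le\;R_\ell-\sqrt{R_\ell^2-R^2}.
\end{equation*}
For the right limit I would use the characterization of $\rwfs$ as an infimum together with property $4$ of $h_\em$: there is a sequence of critical values $t_n\downarrow R$ at which $h_\em(t_n)=t_n$. Since $h_\em$ is non-decreasing, for any $t>R$ one can find $t_n\in(R,t]$, giving $h_\em(t)\ge h_\em(t_n)=t_n\ge R$; letting $t\downarrow R$ and using monotonicity the other way yields $\lim_{t\to R^+} h_\em(t)=R$.

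It then remains to show the algebraic inequality $R_\ell-\sqrt{R_\ell^2-R^2}<R$. This follows from $R<R_\ell$: writing $R_\ell-R>0$ and squaring (both sides are nonneg), the claim reduces to $(R_\ell-R)^2<R_\ell^2-R^2=(R_\ell-R)(R_\ell+R)$, i.e.\ $R_\ell-R<R_\ell+R$, which is just $0<2R$. Combining, $\lim_{t\to R^-}h_\em(t)\le R_\ell-\sqrt{R_\ell^2-R^2}<R=\lim_{t\to R^+}h_\em(t)$, so $h_\em$ has a genuine jump at $R(\em)$.

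The argument is essentially bookkeeping on top of Proposition \ref{p:newbound} and the definition of $\rwfs$, so I do not expect any real obstacle. The only point requiring a little care is confirming existence of critical values converging down to $\rwfs$; this is immediate from the fact that $\rwfs$ is defined as an infimum of $d_\em$ over critical points and from item 4 translating critical values into fixed points of $h_\em$. The use of the strict assumption $R<R_\ell$ (equivalently $\alpha>0$) is exactly what forces the quarter-circle bound from Proposition \ref{p:newbound} to lie strictly below the diagonal at $t=R$, producing the discontinuity.
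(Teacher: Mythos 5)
Your proof is correct and follows essentially the same route as the paper: Proposition \ref{p:newbound} bounds $h_{\em}$ on the left by the quarter-circle (which sits strictly below the diagonal when $R<R_\ell$), while the critical-value characterization forces $h_{\em}=\mathrm{id}$ at $\rwfs$. The paper simply evaluates $h_{\em}(\rwfs)=\rwfs$ directly rather than computing one-sided limits, but the ingredients and the conclusion are the same. One small imprecision: when $\rwfs$ is attained as a critical value (which it is for compact $\em$), a sequence of critical values $t_n\downarrow R$ may be the constant sequence $t_n=R$, so your statement that one can find $t_n\in(R,t]$ need not hold; the cleaner path, as in the paper, is to note $h_{\em}(R)=R$ and deduce $h_{\em}(t)\ge R$ for $t\ge R$ directly by monotonicity.
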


\begin{proof}
	 Since $\alpha > 0$, we have $R(\em) = \rwfs < R_{\ell}$. For $t < \rwfs$ the bound  $h_{\em}(t) \leq R_{\ell} - \sqrt{R_{\ell}^2 - t^2}$ from Proposition \ref{p:newbound} holds. On the other hand, for $t = \rwfs$ we have $h_{\em} (t) = t$. Therefore the one-sided limit $\lim_{t\nearrow \rwfs}\, h_{\em}(t) < h_{\em}(\rwfs)$ and the function is discontinuous.
\end{proof}

The proof of Proposition \ref{p:newbound} will require a few steps.  
We can focus our attention on the local reach by paying attention to sets of the form $\em' = \em \cap B(z,r)$, where $z \in \rrr^D$, $0 < r < R(\em)$ and $B(z,r)$ is a closed ball.
Lemma \ref{l:nobottlenecks} will show that subsets of this type have no bottlenecks.
We would expect, then, that the reach of such a subset is generated by the local geometry.  Lemma \ref{l:reachofemprime}  quantifies this point: the reach of $\em'$ is determined by the behaviour of the second fundamental form on $\em'$. The principal point of difficulty here relates to the boundary of the sets $\em'$.  The proposition then follows from the fact that $h_{\em}(t)$ can be bounded using the functions $h_{\em'}(t)$ and so the bound is in fact determined by the second fundamental form, i.e. by $R_{\ell}$ in particular.

\begin{lemma}\label{l:nobottlenecks}
	Let $A \subseteq \rrr^D$ be a compact set. Let $0 < s < R(A)$, $z \in \rrr^D$, and $A' = A \cap B(z, s)$, where $B$ is a closed ball. If $A' \neq \emptyset$, then $A'$ cannot have any bottlenecks, i.e. there is no pair $p,q \in A'$ with $\| p - q \| = 2R(A')$ and $(p+q)/2 \in Med(A')$.
\end{lemma}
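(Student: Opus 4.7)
The plan is to argue by contradiction: suppose $(p, q) \in A' \times A'$ is a bottleneck, i.e., $\|p - q\| = 2R(A')$ and $m := (p + q)/2 \in Med(A')$. First I would record three easy observations. Since $p, q \in A' \subseteq B(z, s)$ one has $\|p - q\| \leq 2s$, hence $R(A') \leq s$. Since $m \in Med(A')$ satisfies $d(m, A') \geq R(A')$ and since $p \in A'$ realizes $\|m - p\| = R(A')$, one has $d(m, A') = R(A')$, and both $p$ and $q$ are nearest points to $m$ in $A'$. Combining these with $A' \subseteq A$ gives $d(m, A) \leq R(A') \leq s < R(A)$, so that $m$ lies strictly inside the $R(A)$-tubular neighborhood of $A$, and the projection $p^* := \uppi_A(m) \in A$ is uniquely defined.

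The goal is then to show that $p^* \in \{p, q\}$, which will be absurd since $p \neq q$ but both $p$ and $q$ would have to be the unique nearest point in $A$ to $m$. The first case is easy: if $p^* \in A'$, then the chain of inequalities $d(m, A') \leq \|m - p^*\| = d(m, A) \leq d(m, A') = R(A')$ collapses to $\|m - p^*\| = R(A') = \|m - p\| = \|m - q\|$, so $p$, $q$, and $p^*$ all achieve $d(m, A)$, and uniqueness of $\uppi_A(m)$ forces $p = q = p^*$, a contradiction.

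The harder case is $p^* \in A \setminus A'$, i.e., $\|p^* - z\| > s$ and $\|m - p^*\| < R(A')$ (strictly, by uniqueness). I would first dispose of the subcase where one of $p$ or $q$ lies in the interior of $A'$: if, say, $\|p - z\| < s$, the tangent cone of $A'$ at $p$ coincides with $T_p A$, so the bottleneck condition at $p$ forces $m - p$ into the full normal cone $N_p A$; the identity $\|m - p\| = R(A') < R(A)$ together with the characterization of the reach via the normal bundle then yields $\uppi_A(m) = p$, contradicting $p^* \neq p$. Hence both $p$ and $q$ must lie on $\partial B(z, s) \cap A$. I expect this boundary sub-case to be the main obstacle: at such points the tangent cone of $A'$ is only a half-space of $T_p A$ (the side pointing into $B(z, s)$), so the normal-cone condition allows $m - p$ to have an outward tangential component along $p - z$, and the previous reach argument does not directly apply. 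Ruling out this configuration should require exploiting the strict inequality $R(A) > s$ jointly with the geometric constraints on $p, q, m$ relative to the sphere $\partial B(z, s)$, for instance by combining the supporting-ball characterization of the reach at $p^*$ with triangle-inequality bounds on $\|p - p^*\|$, $\|q - p^*\|$ and the chord length $\|p - q\| = 2R(A')$.
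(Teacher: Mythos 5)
Your approach diverges substantially from the paper's, and it has a genuine gap precisely in the sub-case you yourself flag as ``the main obstacle.'' The paper's proof is a one-line chain: a bottleneck would give $2R(A') = \|p-q\| \leq \diam A' \leq 2s < 2R(A) \leq 2R(A')$, a contradiction. The final inequality $R(A) \leq R(A')$ is a monotonicity property of the reach under intersection with a closed ball of radius strictly less than $R(A)$, imported as a black box from Lemma 5 of Attali--Lieutier (2015). That monotonicity statement is the entire load-bearing input, and your proposal never invokes it or any substitute; instead you attempt a direct unique-projection and normal-cone argument.

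Your Case 1 ($p^* \in A'$) and the interior sub-case of Case 2 are sound, but the argument genuinely stalls when both $p$ and $q$ lie on $\partial B(z,s) \cap A$, and the tools you gesture at there do not close it. Concretely, if $\|p-z\| = \|q-z\| = s$ and $\|p-q\| = 2R(A')$, the parallelogram law gives $\|m-z\| = \sqrt{s^2 - R(A')^2}$, and the two constraints you hope to contradict, namely $\|m - p^*\| < R(A')$ and $\|p^* - z\| > s$, are mutually consistent: the triangle inequality only yields $\|m - p^*\| > s - \sqrt{s^2 - R(A')^2}$, and one checks that $s - \sqrt{s^2 - R(A')^2} < R(A')$ whenever $0 < R(A') < s$. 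So no elementary triangle-inequality or supporting-ball estimate forbids the configuration you are trying to rule out; something essentially as strong as the cited monotonicity lemma is needed, and that is exactly what the paper uses.
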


\begin{proof}
	Suppose for a contradiction that a bottleneck exists. Then it is a chord of length $2R(A')$. Since $\diam A' \leq 2s$ we obtain that $2R(A')  \leq 2s < 2 R(A) \leq 2R(A')$, the last inequality holding by  \cite[Lemma 5]{AL15}.
\end{proof}

We now consider the case where $A= \em$, a submanifold, and consider the intersections $\em'$.
Our goal is to find the reach of the intersections, $\em'$, in order to bound $h_{\em'}$ and hence $h_{\em}$. We will use the following characterisation of the reach due to Federer \cite{federer}
$$
\frac{1}{R(A)} = \sup_{p,q \in A}  \frac{2 d(q-p, C_pA)}{\|q-p\|^2},
$$
where $C_pA$ is the tangent cone at $p$, which Federer showed always exists for a set of positive reach.
This quotient can be related to the second fundamental form as follows (cf. \cite[Lemma 3.3]{Reach}; and also work of Lytchak \cite{Lyt} for more general results).

\begin{lemma}\label{l:IInorm}
Let $k \geq 3$ and $\em \in \ckrl$.  Let $\em' = \em \cap B(z,r)$, where $z \in \rrr^D$, $0 < r < R(\em)$ and $B$ is a closed ball.  Then, provided $\em'$ contains more than a single point, for any $p \in \em'$ the norm of the second fundamental form is given by
$$
\| \Pi_p \|_{\mathrm{op}} = \limsup_{\substack{q \rightarrow p\\q \in \em'}} \frac{2 d(q-p, C_p\em')}{\|q-p\|^2},
$$
where $C_p \em'$ is the tangent cone at $p$ in $\em'$. In particular, $1/R(\em') \geq  \sup_{p \in \em'} \|\Pi_p\|_{\mathrm{op}}$.
\end{lemma}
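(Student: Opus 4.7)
The proof combines Federer's characterisation of the reach with the local Taylor expansion provided by Lemma \ref{l:AL-param}. My plan is to establish the limsup formula at points $p \in \em'$ whose local neighbourhood in $\em$ is entirely contained in $\em'$, and then to extend the final inequality to all of $\em'$ via continuity of the second fundamental form.

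First I would fix an \emph{interior} point $p \in \em'$, by which I mean $\|p - z\| < r$, so that $\em \cap B(p,\rho) \subseteq \em'$ for some $\rho > 0$. In particular every tangent direction at $p$ is realised by a secant in $\em'$, hence $C_p\em' = T_p\em$. Lemma \ref{l:AL-param} lets me write any nearby $q \in \em'$ as $q = \Phi_p(v) = p + v + N_p(v)$ with $v \in T_p\em$, $N_p(v) = \tfrac12 \Pi_p(v^{\otimes 2}) + \mathcal{R}(v)$ normal to $T_p\em$, and $\|\mathcal{R}(v)\| \leq C\|v\|^3$. The orthogonality $v \perp N_p(v)$ yields
\begin{equation*}
\|q - p\|^2 = \|v\|^2 + \|N_p(v)\|^2 = \|v\|^2\bigl(1 + O(\|v\|^2)\bigr), \qquad d(q-p, T_p\em) = \|N_p(v)\|,
\end{equation*}
so after substituting the expansion of $N_p$,
\begin{equation*}
\frac{2\,d(q-p, C_p\em')}{\|q-p\|^2} = \frac{\|\Pi_p(v^{\otimes 2})\|}{\|v\|^2} + O(\|v\|).
\end{equation*}
Taking the limsup as $q \to p$ and letting the unit direction $v/\|v\|$ range over the unit sphere of $T_p\em$ produces $\sup_{\|u\|=1}\|\Pi_p(u^{\otimes 2})\| = \|\Pi_p\|_{\op}$, which is the claimed equality.

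For the ``in particular'' statement, Federer's formula
\begin{equation*}
\frac{1}{R(\em')} = \sup_{\substack{p,q \in \em' \\ p \ne q}} \frac{2\,d(q-p, C_p\em')}{\|q-p\|^2}
\end{equation*}
combined with the interior computation immediately gives $1/R(\em') \geq \|\Pi_p\|_{\op}$ for every interior $p$. For a point $p \in \em'$ on the ``boundary'' $\em \cap \partial B(z,r)$, I would approach $p$ by interior points $p_n \in \em'$ and invoke continuity of $p \mapsto \|\Pi_p\|_{\op}$ (a consequence of $\em \in \ckrl$ with $k \geq 3$) to conclude $1/R(\em') \geq \|\Pi_p\|_{\op}$ in this case as well. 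The main subtlety is precisely this boundary behaviour: there the tangent cone $C_p\em'$ degenerates into a proper half-space of $T_p\em$, so the limsup formula may strictly drop below $\|\Pi_p\|_{\op}$; routing around this via the density of interior points in $\em'$ and continuity of the second fundamental form is the cleanest way to close the argument.
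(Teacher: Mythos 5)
Your argument for interior points is correct and matches the paper's computation via Lemma~\ref{l:AL-param}, but the handling of boundary points is where the proposal breaks down. You suspect the limsup formula ``may strictly drop below $\|\Pi_p\|_{\op}$'' at a point $p \in \em \cap \partial B(z,r)$ because $C_p\em'$ is only a half-space of $T_p\em$, and you route around this by proving only the ``in particular'' inequality via continuity. This leaves a genuine gap: the lemma asserts the \emph{equality} for every $p \in \em'$, and that equality is used downstream (Case 2 of Lemma~\ref{l:reachofemprime}) at a point that may well lie on $\partial\em'$.

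The paper closes the gap with a single observation you missed: $\Pi_p$ is a \emph{symmetric} bilinear form, so the unit vector $w$ attaining $\|\Pi_p(w,w)\| = \|\Pi_p\|_{\op}$ can be replaced by $-w$ without changing the value, and at least one of $\pm w$ lies in the half-space $C_p\em' = T_p\em \cap \{u : \langle p-z, u\rangle \le 0\}$. Since the tangent cone is by definition the set of limits of secant directions, one can choose $q_i \in \em'$ whose projections to $T_p\em$ approach $0$ along that admissible direction $w$, so the limsup over $q \in \em'$ still attains $\|\Pi_p\|_{\op}$. The paper also verifies, as a preliminary step, that $r$ is a regular value of the distance-to-$z$ function restricted to $\em$ (using $r < R(\em)$), so $\partial\em'$ is a genuine codimension-one submanifold and the tangent cone really is a half-space rather than something degenerate; and it checks that the projection of any $q \in \em'$ onto $T_p\em$ lands in $C_p\em'$, so that $d(q-p, C_p\em')$ equals the norm of the normal component exactly. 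Your continuity argument does salvage the final inequality, but it proves less than what the lemma states and is therefore not a substitute for the symmetry observation.
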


\begin{proof}
We claim that $\partial \em'$ is a $\cC^k$ submanifold of $\em$. Consider the distance function to the central point $z \in \rrr^D$, say $f(y) = d(z,y)$.  This function is smooth on $\rrr^D \setminus z$ and its pull-back $\left. f \right| _{\em}$ is $\curlyc^k$ on $\em \setminus z$.   For any $p \in \partial \em'$, $ f(p)=r$. Note that $r$ is a critical value of $\left. f \right| _{\em}$ precisely when the distance sphere $\partial B(z,r)$ is tangent to $\em$ at some $p \in \em$.  

However, this cannot happen for $r < R(\em)$. This is because $r$ is less than the focal radius at $p$ and so $\em$ must lie in the exterior of $B(z,r)$. 
This in turn implies that $\em' = \{ p\}$ which contradicts the assumption that it is not a singleton.  Therefore, $r$ is a regular value of the $\cC^k$ function $f$ on $\em$ and the pre-image $\partial \em'$ is an embedded submanifold without boundary, as claimed. 

As a consequence, $\em'$ is an embedded submanifold of $\em$ of full dimension with boundary. The tangent cone in $\rrr^D$, $C_p \em'$, is given by $T_p \em$ for $p$ in the interior of $\em'$ and by a half-space of $T_p \em$ for $p \in \partial \em'$,
namely
$$C_p \em' = T_p \em \cap \{u\,|\,\langle p-z,u\rangle \leq 0\},$$
where $z$ is the center of the ball containing $\em'$.
We now consider some other point $q \in \em'$, $q \neq p$, and show that the projection of $q$ to $T_p\em$ lies in $C_p \em'$. Suppose $p \in \partial \em' \subseteq \partial B$. Consider the affine hyperplane $H^{D-1}$ through $p$ perpendicular to the line $pz$. Since $q \in B$, $q$ lies on the same side of $H$ as $z$ and therefore the projection of $q$ to $T_p\em$ lies in $C_p \em'$. If $p \notin \partial \em'$ then $T_p \em = C_p \em'$ and so this statement automatically holds.

Let us assume now that $q$ is close to $p$, satisfying $\|q-p\| \leq \tfrac{1}{4} \min\{R_{\text{min}}, (L_{\perp})^{-1}\}$, so that the projection of $q$ to $C_p \em'$ satisfies the conclusion of Lemma~\ref{l:AL-param}. In particular, if $v$ is the projection of $q$ onto $T_p \em$, we may write
$$
q-p = v + \tfrac{1}{2} \Pi_p(v,v) + \mathcal{R}_3(v),
$$
where the remainder $\mathcal{R}_3(v)$ is of order $O(\|v\|^3)$.  Therefore
$$
d(q-p, C_p \em') = \left\| \tfrac{1}{2} \Pi_p(v,v) + \mathcal{R}_3(v) \right\|.
$$
We can then calculate the Federer quotient,
$$
\begin{aligned}
\frac{2 d(q-p, C_p \em')}{\|q-p\|^2} &= \frac{\left\| \Pi_p(v,v) + 2 \mathcal{R}_3(v) \right\|}{ \|v\|^2 + \left\|\tfrac{1}{2} \Pi_p(v,v) + \mathcal{R}_3(v)\right\|^2}\\
	&=\frac{1}{\frac{\|v\|^2}{\left\| \Pi_p(v,v) + 2 \mathcal{R}_3(v) \right\|}  + \tfrac{1}{4} \left\| \Pi_p(v,v) + 2 \mathcal{R}_3(v) \right\|}.
\end{aligned}
$$
As $q \rightarrow p$ we see that $v \rightarrow 0$. In order to compute the $\limsup$, we may assume that a sequence of points $q_i$ is chosen such that $\|\Pi_p(v_i,v_i)\|$ is maximized. Then, since all terms in the denominator go to zero except the ratio $\frac{\|v_i\|^2}{\|\Pi_p(v_i,v_i)\|}$, we have
$$ \limsup_{\substack{q \rightarrow p\\q \in \em'}} \frac{2 d(q-p, C_p\em')}{\|q-p\|^2} = \lim_{i \to \infty} \frac{\|\Pi_p(v_i,v_i)\|}{\|v_i\|^2}.  $$
We would like to claim that $$\lim_{i \to \infty} \frac{\|\Pi_p(v_i,v_i)\|}{\|v_i\|^2}=  \| \Pi_p \|_{\mathrm{op}},$$ but recall that $p$ may lie on the boundary of $\em'$ and so we must  check that a suitable sequence of points $q_i \in \em'$ can be found.  Since $C_p \em'$ is a half-space and $\Pi_p$ is a symmetric, bilinear form, there is some unit vector $w \in C_p \em'$ so that $\|\Pi_p(w,w)\| = \|\Pi_p\|_{\mathrm{op}}$.  Then we can choose a sequence $q_i \in \em'$ so that the projections of the $q_i$ are $t_i v_i$, where the $v_i$ are unit vectors in $C_p \em'$ such that $v_i \rightarrow w$ and the $t_i$ are positive numbers with $t_i  \to 0$. The existence of such  a sequence is equivalent to the fact that $w \in C_p \em'$. The final statement then follows from
\begin{equation*}
	\| \Pi_p \|_{\mathrm{op}} = \limsup_{\substack{q \rightarrow p\\q \in \em'}} \frac{2 d(q-p, C_p\em')}{\|q-p\|^2} \leq \sup_{p,q \in \em'}  \frac{2 d(q-p, C_p\em')}{\|q-p\|^2} =
		\frac{1}{R(\em')}. \qedhere
\end{equation*}
\end{proof}

\begin{remark}
The regularity assumption of $k \geq 3$ in the previous lemma may possibly be improved to $k \geq 2$.  This stems from the assumption in Lemma~\ref{l:AL-param} which in turn derives from the regularity assumption in \cite[Lemma 2]{AL}.  However, this is not needed in the sequel so we do not pursue this further.
\end{remark}

\begin{lemma}\label{l:reachofemprime}
Let $k \geq 3$ and $\em \in \ckrl$.  Let $\em' = \em \cap B(z,r)$, where $z \in \rrr^D$, $0 < r < R(\em)$ and $B$ is a closed ball.  Then, provided $\em'$ contains more than a single point, we have $1/R(\em')=\sup_{p \in \em'} \|\Pi_p\|_{\mathrm{op}}$.
\end{lemma}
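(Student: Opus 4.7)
The inequality $\sup_{p \in \em'} \|\Pi_p\|_{\op} \leq 1/R(\em')$ is already contained in Lemma~\ref{l:IInorm}; only the reverse inequality needs to be established. The plan is to combine Federer's quotient characterization of the reach with the no-bottleneck conclusion of Lemma~\ref{l:nobottlenecks}, viewing the result as an analogue of Theorem~\ref{reachdichotomy} adapted to the manifold-with-boundary $\em'$.

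By Federer's formula applied to $\em'$,
$$\frac{1}{R(\em')} \;=\; \sup_{p, q \in \em',\, p \neq q} F(p,q), \qquad F(p,q) := \frac{2\, d(q - p, C_p \em')}{\|q-p\|^2}.$$
I would take a supremizing sequence $(p_n, q_n)_n$ and, by compactness of $\em' \times \em'$, pass to a convergent subsequence $(p_n, q_n) \to (p^*, q^*)$. In the diagonal case $p^* = q^*$, a uniform version of the computation at the end of Lemma~\ref{l:IInorm} --- using the Taylor expansion from Lemma~\ref{l:AL-param} around $p^*$, whose remainder is controlled uniformly over $p \in \em$ --- yields $\limsup_n F(p_n, q_n) \leq \|\Pi_{p^*}\|_{\op} \leq \sup_{p} \|\Pi_p\|_{\op}$, and we conclude.

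In the off-diagonal case $p^* \neq q^*$, I would derive a contradiction with Lemma~\ref{l:nobottlenecks}. Since the tangent cone $C_p \em'$ only shrinks (from $T_p \em$ to a half-space) as $p$ crosses $\partial \em'$, the quantity $d(q-p, C_p \em')$ is upper semi-continuous off the diagonal, so $F(p^*, q^*) = 1/R(\em')$. Setting $u := q^* - p^*$, a first-order variational argument on $F$ restricted to $\em' \times \em'$ --- in the spirit of the proof of Theorem~\ref{reachdichotomy} --- should show that any nonzero tangential component of $u$ in $T_{p^*}\em$ or $T_{q^*}\em$ could be exploited to strictly increase $F$ by an infinitesimal perturbation of $p^*$ or $q^*$ along $\em'$, contradicting maximality. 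Hence $u \perp T_{p^*}\em$ and $u \perp T_{q^*}\em$, so $d(u, T_{p^*}\em) = \|u\|$, the identity $F(p^*, q^*) = 1/R(\em')$ collapses to $\|u\| = 2R(\em')$, and the midpoint $(p^* + q^*)/2$ is equidistant from $p^*$ and $q^*$, lying in $Med(\em')$. This is a bottleneck, contradicting Lemma~\ref{l:nobottlenecks}.

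The main obstacle is the rigorous execution of the first-order variational argument in Case~2, especially when $p^*$ or $q^*$ lies on $\partial \em'$: at such a boundary extremum the admissible perturbations are restricted to the half-space $C_{p^*}\em'$ (or $C_{q^*}\em'$), and one must verify that the orthogonality conclusion still follows. This boundary case should ultimately be ruled out geometrically via the inclusion $\em' \subseteq B(z, r)$, which constrains $u$ with respect to the outward normal to $\partial \em'$ and prevents an extremal pair from straddling the boundary.
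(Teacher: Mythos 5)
Your decomposition into a diagonal and an off-diagonal case tracks the paper's two cases, and your diagonal case matches the paper's Case 2 (attainment at a single point, handled by Lemma~\ref{l:IInorm}). The fatal divergence is in the off-diagonal case: the paper's Case 1 covers attainment at a pair $q,r$ with $\|q-r\| < 2R(\em')$ and invokes~\cite[Lemma~3.2]{Reach} to produce, inside $\em'$, an arc of a circle of radius $R(\em')$, from which $\|\Pi_p\|_{\mathrm{op}} \geq 1/R(\em')$ follows at any point of that arc. You instead try to argue that off-diagonal attainment must be a bottleneck and derive a contradiction from Lemma~\ref{l:nobottlenecks}. That argument cannot work.

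Concretely, your claim that first-order maximality of $F$ forces $u = q^* - p^*$ to be orthogonal to both $T_{p^*}\em$ and $T_{q^*}\em$ is false. Take $\em'$ to contain an arc of a circle of radius $R = R(\em')$. For any two points $p^*, q^*$ on that arc, a direct computation gives $F(p^*,q^*) = 1/R$ exactly: the Federer quotient is \emph{constant} along a two-parameter family of pairs, the first-order variation of $F$ vanishes identically in all tangential directions, and the chord $q^*-p^*$ is manifestly not perpendicular to the tangent line at either endpoint (unless the points are antipodal). So there is no tangential direction in which $F$ strictly increases, and the orthogonality conclusion, the identity $\|u\| = 2R(\em')$, and the bottleneck contradiction all fail. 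This is not a degenerate exception you can brush aside; it is precisely the situation that Case~1 in the paper is designed to handle, and the content of~\cite[Lemma~3.2]{Reach} is that this arc structure is the \emph{only} way the supremum is attained at a non-antipodal, non-diagonal pair. Your proposal would need that ingredient (or an equivalent argument that at a maximizing pair with $\|q^*-p^*\| < 2R(\em')$ an osculating circle of radius $R(\em')$ at $p^*$ passes through $q^*$ and lies in $\em'$, so that $\|\Pi_{p^*}\|_{\mathrm{op}} \geq 1/R(\em')$) to close the off-diagonal case; as written, the variational step proves something false.
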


\begin{proof}
	We have already shown in Lemma \ref{l:IInorm} that $1/R(\em')\geq\sup_{p \in \em'} \|\Pi_p\|_{\mathrm{op}}$. By Lemma \ref{l:nobottlenecks}, $\em'$ does not contain any bottlenecks.  It follows that the reach is attained in one of two ways and we examine each case.
	
	\textit{Case 1}:  The reach of $\em'$ is attained by a pair of points $q,r \in \em'$ but $\| q-r \| < 2R(\em')$.  In this case we apply \cite[Lemma 3.2]{Reach} to obtain, in $\em'$, an arc of a circle of radius $R$ equal to the reach of $\em'$.  Note that that lemma is stated for manifolds, but in fact the proof only requires a set of positive reach.  Then, for any point $p$ on the reach-attaining arc, we obtain that
	$$
	\frac{1}{R(\em')} \leq \|\Pi_p\|_{\mathrm{op}} \leq \sup_{s \in \em'} \|\Pi_s\|.
	$$
	
	\textit{Case 2}:  The reach of $\em'$ is attained at a single point, say $p$, in $\em'$.  It follows, using Lemma \ref{l:IInorm} that
	$$
	\frac{1}{R(\em')}  =  \limsup_{\substack{q \rightarrow p\\q \in \em'}} \frac{2 d(q-p, C_p\em')}{\|q-p\|^2} = \|\Pi_p\|_{\mathrm{op}} \leq \sup_{s \in \em'} \|\Pi_s\|_{\mathrm{op}}.
	$$
		Combining the two cases, then, we also have that $$\frac{1}{R(\em')} \leq \sup_{s \in \em'} \|\Pi_s\|_{\mathrm{op}}$$ completing the proof.
\end{proof}

\begin{proof}[Proof of Proposition \ref{p:newbound}]
	Let $\em' = \em \cap B(z,r)$, where $z \in \rrr^D$, $0 < r < R(\em)$ and $B$ is a closed ball.
Recall that on $[0,R(\em'))$ we have 
$$h_{\em'}(t)  \leq R(\em') - \sqrt{R(\em')^2 - t^2}.$$
By Lemma \ref{l:reachofemprime}, if $\em'$ is not a single point we have 
$$\frac{1}{R_{\ell}} =  \sup_{s \in \em} \|\Pi_s\|_{\mathrm{op}} \geq \sup_{s \in \em'} \|\Pi_s\|_{\mathrm{op}} = \frac{1}{R(\em')},$$
and this entails the bound $h_{\em'}(t)  \leq R_{\ell} - \sqrt{R_{\ell}^2 - t^2}$ on $[0,R(\em'))$. If $\em'$ is a point then $h_{\em'}(t) = 0$ for all $t$ and so the same bound holds.

Recalling that $R(\em') \geq R(\em)$ for every $\em'$ with $\rad(\em')<R(\em)$, we have, for $0 < t \leq r < R(\em)$,
$$\sup_{\em' = \em \cap B(z,r)} h_{\em'}(t) \leq R_{\ell} - \sqrt{R_{\ell}^2 - t^2}.$$
Now for every $\sigma \subset \em$ with $\rad (\sigma) \leq t \leq r$, there is some $\em' = \em \cap B(z,r)$ with $\sigma \subset \em'$ and it follows that
$$h_{\em}(t) \leq \sup_{\em' = \em \cap B(z,r)} h_{\em'}(t).$$
Setting $t=r$ and combining the two inequalities, we have, for $0 < t < R(\em)$, 
\begin{equation*}
h_{\em}(t) \leq R_{\ell} - \sqrt{R_{\ell}^2 - t^2}. \qedhere
\end{equation*}
\end{proof}

\subsection{The convexity defect function near zero}\label{hnearzero} We have seen in the previous section how, for $\em \subseteq \rrr^D$ a compact submanifold, the function $h_{\em}$ on $[0,R)$ obeys an upper bound determined by $R_{\ell}$. We now study $h_{\em}$ in greater detail in a neighborhood of zero to obtain a Taylor polynomial, identifying $R_{\ell}$ as the reciprocal of the `pointwise second derivative', $1/h''_{\em}(0)$. More formally, we prove Proposition \ref{taylorpoly}, which states that, for any sufficiently small $t$,
	\begin{align*}
	\left|h_{\em}(t) - \frac{t^2}{2 R_\ell} \right| \leq Ct^{k \wedge 4}.
	\end{align*}

Once more, we approach $h_{\em}$ by considering sets $\em'$, which are the intersection of $\em$ with small closed balls. 

We introduce a new function $\hloc{\em'}$, which contains information on the convexity of $\em'$.  Lemma \ref{l:ballscenteredonem} shows how $h_{\em}$ can be determined from all the $\hloc{\em'}$. Recall from Lemma \ref{l:AL-param} that such sets $\em'$ can be written as the graphs of functions over $T_p \em$ and that these functions have Taylor expansions.

Lemma \ref{l:cdfalgebraic} will set a lower bound on $h^{\rm loc}$ for the degree 3 approximation to $\em$ around $p$, which Lemma \ref{lem5} translates to a lower bound on $\hloc{\em'}$ itself. Varying $\em'$ we obtain a lower bound on $h_{\em}(t)$ for small $t$, which we combine with the upper bound from Proposition \ref{p:newbound} to prove the result.

\begin{lemma} \label{l:ballscenteredonem}
	Let $B$ denote a closed ball. Then, for any compact set $\bf X \subset \bbR^D$ and any $r_1 ,r_2,t > 0$ satisfying $r_1 \geq 2t$ and $r_2 \geq t+r_1$, we have 
	\begin{align*}
	h_{\bf X}(t) = \sup_{p \in \ex} \hloc{\bf X}
	\end{align*}
where 
$$
 \hloc{\bf X} = \h\Big(\bigcup_{\substack{\sigma \subseteq {\bf X} \cap B(p,r_1)\\ \rad \sigma \leq t}} \hull \sigma ~ \Big  | ~{\bf X} \cap B(p,r_2) \Big).
$$
\end{lemma}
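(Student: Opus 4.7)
The plan is to prove the two asymmetric inequalities $h_{\mathbf{X}}(t) \leq \sup_{p \in \mathbf{X}} \hloc{\mathbf{X}}$ and $\hloc{\mathbf{X}} \leq h_{\mathbf{X}}(t)$ for each $p \in \mathbf{X}$. The only geometric input needed beyond the triangle inequality is property 4 of the convexity defect function listed after Definition \ref{def: convexity defect function}: every $y \in \hull(\mathbf{X}, t)$ satisfies $d(y, \mathbf{X}) \leq h_{\mathbf{X}}(t) \leq t$.

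I will first isolate the principal observation: the localized nearest-point distance agrees with the global one. Given $p \in \mathbf{X}$, a subset $\sigma \subseteq \mathbf{X} \cap B(p, r_1)$ with $\rad \sigma \leq t$, and $y \in \hull \sigma$, the convexity of $B(p, r_1)$ gives $\|y - p\| \leq r_1$. If $x \in \mathbf{X}$ realizes $d(y, \mathbf{X})$ (which exists by compactness of $\mathbf{X}$), then
$$\|x - p\| \leq \|x - y\| + \|y - p\| \leq t + r_1 \leq r_2,$$
so $x \in \mathbf{X} \cap B(p, r_2)$ and consequently $d(y, \mathbf{X} \cap B(p, r_2)) = d(y, \mathbf{X})$.

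From this identity the two inequalities are immediate. For the $\geq$ direction, fix $p \in \mathbf{X}$; every $\sigma$ appearing in the local sup is also admissible in the definition of $h_{\mathbf{X}}(t)$, so the identity above yields $\hloc{\mathbf{X}} \leq h_{\mathbf{X}}(t)$. For the $\leq$ direction, take any nonempty $\sigma \subseteq \mathbf{X}$ with $\rad \sigma \leq t$ and pick any basepoint $p \in \sigma$; since $\sigma$ lies in a ball of radius $t$ about some center $c$, every $q \in \sigma$ satisfies $\|q - p\| \leq \|q - c\| + \|c - p\| \leq 2t \leq r_1$, so $\sigma \subseteq \mathbf{X} \cap B(p, r_1)$. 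For any $y \in \hull \sigma$ the identity now yields $d(y, \mathbf{X}) = d(y, \mathbf{X} \cap B(p, r_2)) \leq \hloc{\mathbf{X}} \leq \sup_{p \in \mathbf{X}} \hloc{\mathbf{X}}$, and taking suprema closes the argument. No step presents a genuine obstacle; the only subtlety worth noting is the appeal to property 4 to bound $d(y, \mathbf{X})$ by $t$, which is precisely what permits the sharp linear condition $r_2 \geq r_1 + t$ rather than the $r_2 \geq r_1 + 2t$ that the naive estimate $d(y, \mathbf{X}) \leq 2 \rad \sigma$ would force.
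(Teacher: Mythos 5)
Your proof is correct and follows essentially the same route as the paper's: both inequalities rest on the observations that every $\sigma \subseteq \mathbf{X}$ with $\rad\sigma \leq t$ fits into $B(p,r_1)$ for any $p \in \sigma$ (using $r_1 \geq 2t$), and that for $y$ in the localized convex hull, property 4 bounds $d(y,\mathbf{X})$ by $t$, so the nearest point of $\mathbf{X}$ to $y$ lies in $B(p,r_2)$ (using $r_2 \geq r_1 + t$). Your only departure from the paper is organizational — you isolate the identity $d(y,\mathbf{X}) = d(y, \mathbf{X}\cap B(p,r_2))$ once and reuse it in both directions, whereas the paper argues the two inclusions of targets slightly differently in each direction — but the underlying mathematics is identical.
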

\begin{proof} We begin by showing $h_{\bf X}(t) \geq \sup_{p \in \ex} \hloc{\bf X}$.
We have immediately, for any $p \in {\bf X}$ and any $r_1,t > 0$
	\begin{align*}
	h_{\bf X}(t) = H\Big(\bigcup_{\substack{\sigma \subseteq {\bf X}\\ \rad \sigma \leq t}}
	 \hull \sigma \big| {\bf X} \Big) \geq H\Big(\bigcup_{\substack{\sigma \subseteq {\bf X} \cap B(p,r_1)\\ \rad \sigma \leq t}} \hull \sigma \big| {\bf X} \Big)
	\end{align*}
	and so all that is necessary is to check that
\begin{align*}
	 H\Big(\bigcup_{\substack{\sigma \subseteq {\bf X} \cap B(p,r_1)\\ \rad \sigma \leq t}} \hull \sigma \big| {\bf X} \Big) = 
H\Big(\bigcup_{\substack{\sigma \subseteq {\bf X} \cap B(p,r_1)\\ \rad \sigma \leq t}} \hull \sigma \big| {{\bf X}\cap B(p,r_2)} \Big) = 
\hloc{\bf X}.
	\end{align*}  Let the asymmetric distance $$\h\Big(\bigcup_{\substack{\sigma \subseteq {\bf X} \cap B(p,r_1)\\ \rad \sigma \leq t}} \hull \sigma \big| {\bf X} \Big)$$ be realized by the data $\sigma \subseteq {\bf X} \cap B(p,r_1)$, $y \in \hull \sigma$, $p' \in {\bf X}$. We have $d(p',y) \leq t$ and $d(y,p) \leq r_1$ so that $d(p',p) \leq r_1+t \leq r_2$.

Now we check that $h_{\bf X}(t) \leq \sup_{p \in \ex} \hloc{\bf X}$. If $\sigma \subset\ex$, we have $\sigma \subset B(p, 2 \rad \sigma)$ for any $p \in \sigma$. By requiring $\rad \sigma \leq t$, we obtain  $H(\hull \sigma | X) \leq  \hloc{\bf X}$ for any $p \in \sigma$ provided $r_1 \geq 2t$.
\end{proof}

For a bilinear map $S : \mathbf{R}^d \times \mathbf{R}^d \rightarrow \mathbf{R}^{D-d}$ and a trilinear map $T : \mathbf{R}^d \times \mathbf{R}^d \times \mathbf{R}^d \rightarrow \mathbf{R}^{D-d}$, we denote 
\begin{align*}
M(S,T) = \left\{(v,S(v^{\otimes 2})+T(v^{\otimes 3}))~|~v\in\mathbf{R}^d\right\} \subseteq \mathbf{R}^D
\end{align*}
which is a $\mathscr{C}^\infty$ submanifold of $\mathbf{R}^D$ of dimension $d$.

By setting $S$ and $T$ to be the coefficients of $\Phi^3_p$, the approximation of degree 3 to a manifold $\em$ around $p \in \em$ (see Definition \ref{d:degkapprox}), we can easily see that, near $p$, $M(S,T)$ is Hausdorff close to $\em$.
This assumes that $p=0$ and that $T_p \em$ is the subspace spanned by the first $d$ co-ordinates. This assumption, which is used in the statement of the lemma below, is for convenience only. For each $p \in \em$ there is an isometry of $\rrr^D$ which causes it to be satisfied.

\begin{lemma} \label{lem4}
	Let $ \em \in \ckrl$. Suppose that $p=0 \in \em$ and $T_p \em = \rrr^d \subseteq \rrr^D$. 
	
	If $k \geq 4$, we have, for $s \leq s_{1}$ with $s_1$ depending only on $\rmin, \bL, k ,d$, 
	\begin{align*}
	H\left( \em\cap B(0,s), M(S,T)\cap B(0,s)\right) \leq C s^4,
	\end{align*}
	where $S$ and $T$ are obtained from the degree $3$ approximation $\Phi_0^3$ given in Definition \ref{d:degkapprox} by $S=\frac12{\rm d}_0^2\Phi_0^3=\Pi_0$, $T=\frac16{\rm d}_0^3\Phi_0^3$ and the constant $C = C_{\rmin, \bL, k ,d}$. 
	
	When $k=3$ we can use the degree $2$ approximation $\Phi_0^2$ and pick $T \equiv 0$, to obtain
	\begin{align*}
	H\left( \em\cap B(0,s), M(S,0)\cap B(0,s)\right) \leq C' s^3
	\end{align*}
\end{lemma}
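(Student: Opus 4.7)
The plan is to first compare, pointwise on the tangent space, the parametrization $\Phi_0$ from Lemma~\ref{l:AL-param} with its truncation $\Phi_0^j$ (taking $j=3$ when $k\geq 4$ and $j=2$ when $k=3$), and then to convert the parametric estimate into the desired Hausdorff bound on the cuts by $B(0,s)$. Since $T_0\em=\rrr^d$ and the tensors $T_i$ in Lemma~\ref{l:AL-param} all take values in the normal space $T_0\em^\perp$, the image $\Phi_0^j(T_0\em)$ coincides, up to the identification of the constants $S$ and $T$, with the model manifold $M(S,T)$ (with $T\equiv 0$ in the case $k=3$). Combining $\|\mathcal{R}_k(v)\|\leq C\|v\|^k$ with the bounds $\|T_i\|_\op \leq L'_i$ for $j<i\leq k-1$, a direct summation yields, for $\|v\|$ smaller than some threshold depending only on $\rmin,\bL,k,d$,
\begin{equation*}
\|\Phi_0(v)-\Phi_0^j(v)\|\leq C\|v\|^{k\wedge 4},
\end{equation*}
with $C=C_{\rmin,\bL,k,d}$.

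The second step is to turn this parametric bound into a Hausdorff estimate between the two sets intersected with $B(0,s)$. Since $\pr_{T_0\em}\circ\Phi_0=\id$, the map $\Phi_0$ has Lipschitz constants close to $1$ near the origin, so $\em\cap\bar B(0,s)=\Phi_0(V)$ with $V=\{v\in T_0\em : \Phi_0(v)\in\bar B(0,s)\}\subset B_{T_0\em}(0,2s)$ for $s$ small enough. For any $q=\Phi_0(v)\in\em\cap B(0,s)$, the candidate $\Phi_0^j(v)\in M(S,T)$ lies within $Cs^{k\wedge 4}$ of $q$; the only concern is that it may overshoot $B(0,s)$, by at most the same amount.

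The main obstacle is precisely this boundary correction. The idea is to shrink $v$ radially: the continuous map $t\mapsto\|\Phi_0^j(tv)\|$ vanishes at $t=0$ and exceeds $s$ at $t=1$, so by the intermediate value theorem there exists $t^*\in(0,1]$ with $\|\Phi_0^j(t^*v)\|=s$, placing $\Phi_0^j(t^*v)$ on $M(S,T)\cap\partial B(0,s)$. Expanding $\Phi_0^j(tv)=tv+O(t^2\|v\|^2)$ near the origin shows that $\tfrac{d}{dt}\|\Phi_0^j(tv)\|^2$ is comparable to $2t\|v\|^2$ for small $\|v\|$; here $\|v\|\asymp s$, since the correction is only needed when $q$ lies near $\partial B(0,s)$. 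A mean-value argument applied to $t\mapsto\|\Phi_0^j(tv)\|^2$ then yields $(1-t^*)\|v\|=O(s^{k\wedge 4})$, and by the Lipschitz property of $\Phi_0^j$ we obtain $\|\Phi_0^j(v)-\Phi_0^j(t^*v)\|=O(s^{k\wedge 4})$. The triangle inequality combined with the parametric bound gives $\|q-\Phi_0^j(t^*v)\|\leq Cs^{k\wedge 4}$.

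The reverse Hausdorff inclusion, from $M(S,T)\cap B(0,s)$ into $\em\cap B(0,s)$, is entirely symmetric: for any $q'=\Phi_0^j(v')\in M(S,T)\cap B(0,s)$ one uses the same parametric bound, and if $\Phi_0(v')\notin B(0,s)$, one applies an analogous IVT argument to $t\mapsto\|\Phi_0(tv')\|$ to land on $\em\cap\partial B(0,s)$. Taking $s_1$ smaller than the thresholds appearing in each step yields the claimed estimate uniformly in the two regimes $k\geq 4$ and $k=3$.
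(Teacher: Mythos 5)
Your proposal is correct and follows essentially the same route as the paper: establish the parametric bound $\|\Phi_0(v)-\Phi_0^j(v)\|\le C\|v\|^{k\wedge 4}$ from Lemma~\ref{l:AL-param}, then handle the possibility that the corresponding point overshoots $B(0,s)$ by radially shrinking the tangent vector and showing the shrinkage factor is $O(s^{k\wedge 4-1})$. The paper computes the shrinkage $\lambda$ directly from $\|\Phi_0(v)\|\le s+C_1s^5$, while you obtain $t^*$ via an intermediate-value plus mean-value argument using the orthogonality of the $T_i$ to $T_0\em$; these are equivalent in substance and yield the same estimate.
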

\begin{proof}
Let us initially take $s_1 = \min\lbrace \rmin, L_{\perp}^{-1}\rbrace/4$. Then for any point $q \in  \em \cap B(0,s)$, if $v = \pr_{T_0 \em}(q)$ then 
	\begin{align*}
	q=\Phi_0(v) = v + S(v^{\otimes 2}) + T(v^{\otimes 3})+\mathcal{R}(v),
	\end{align*}
	where $\Phi_0$ is the expansion given in Lemma \ref{l:AL-param}
	and $\|\mathcal{R}(v)\| \leq \frac{L'_4}{24} \|v\|^4$, unless  $k=3$.
	In  case $k=3$, if we wish to control the remainder we can only use the degree 2 polynomial approximation $\Phi_0^2$.
	
	It is therefore clear that, for the point $q = \Phi_0(v) \in \em \cap B(0,s)$, there is a corresponding point $\Phi_0^3(v) \in M(S,T)$ within the required distance and, conversely, for any point $\Phi_0^3(v) \in M(S,T) \cap B(0,s)$, there is a corresponding point $\Phi_0(v) \in \em$. The constant $C$ may be chosen to be $C = \frac{L'_4}{24}$. 
	
	However, the corresponding point is not guaranteed to lie in the ball $B(0,s)$. In the next paragraph we establish that there is a vector $v'$ very close to $v$, so that $\Phi_0^3(v')$ or $\Phi_0(v')$, as appropriate, will be sufficiently close.
	
	Let us continue to assume $k \geq 4$, since the case $k=3$ is essentially identical. We first consider the possibility that $\| \Phi_0^3(v) \| \leq s$ but $\| \Phi_0(v) \| > s$ . It is clear that, for sufficiently small $s$, $\| \Phi_0(v) \|^2 \leq s^2 + C_0 s^6$, where $C_0 $ depends on $\rmin, L_{\perp}, L_3$ and $L_4$. It follows that $\| \Phi_0(v) \| \leq s + C_1 s^5$. Consider now a vector $v' = (1-\lambda) v$, with $\lambda \approx 0$, chosen so that $\| \Phi_0(v') \| =s$. For small enough $s$ we have $\lambda \leq C_2 s^4$. It follows immediately that $\Phi_0(v')$ lies within $C_3 s^4$ of $\Phi_0(v)$, and hence within $C s^4$ of $\Phi_0^3(v)$. 
	
The case where $\| \Phi_0(v) \| \leq s$ but $\| \Phi_0^3(v) \| > s$  is dealt with similarly.
\end{proof}

The utility of $M(S,T)$ is that, since it is algebraic, we can compute explicit bounds for $h_{\ex}^{\loc}$, where $\ex = M(S,T)$.

\begin{lemma} \label{l:cdfalgebraic}Let $r_1 \leq r_2 \leq \frac{13^{1/4}}{2}\tnorm^{-1/2}$, and let $\ex =M(S,T)$.
	Then for any 
	$t \leq \min\left(
		\frac{1}{2}\snorm^{-1}, \frac{2}{\sqrt{13}}r_1
	\right)$
		we have
	\begin{align*}
	h_{\ex}^{\loc}(0,r_1,r_2 ; t) \geq \left(t - \frac12t^5 \tnorm^2\right)^2 \snorm  \geq t^2 \snorm - t^6 \snorm \tnorm^2. 
	\end{align*}
\end{lemma}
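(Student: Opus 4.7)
The plan is to exhibit an explicit two-point subset $\sigma \subset \ex \cap B(0, r_1)$ together with a witness $y \in \hull\sigma$ realizing the claimed lower bound. The geometric idea is that a chord of $\ex$ in the direction of maximum curvature of $S$ has midpoint at distance $\approx t^2\|S\|_{\op}$ from $\ex$; the correction $\tfrac12 t^5\|T\|_{\op}^2$ accounts for the fact that the cubic term $T$ forces us to slightly shorten the chord in order to keep its radius $\leq t$.

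Concretely, I would pick a unit vector $u \in \rrr^d$ with $\|S(u^{\otimes 2})\| = \|S\|_{\op}$, set $t' = t - \tfrac12 t^5\|T\|_{\op}^2$, and define
$$p_\pm = \big(\pm t' u,\ (t')^2 S(u^{\otimes 2}) \pm (t')^3 T(u^{\otimes 3})\big)\in \ex,$$
with $\sigma = \{p_+, p_-\}$ and witness $y = m = \tfrac12(p_+ + p_-) = (0, (t')^2 S(u^{\otimes 2})) \in \hull\sigma$.

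The first step is to verify admissibility. The radius constraint reduces, via $\|T(u^{\otimes 3})\|\leq\|T\|_{\op}$, to $(t')^2(1+(t')^4\|T\|_{\op}^2)\leq t^2$; this is exactly the elementary inequality $(1-x/2)^2(1+x)\leq 1$ for $x\leq 3$, applied with $x = t^4\|T\|_{\op}^2 \leq 1/13$, where the last bound follows from $t \leq \tfrac{2}{\sqrt{13}}r_1 \leq \tfrac{1}{13^{1/4}}\|T\|_{\op}^{-1/2}$. The containment $p_\pm \in B(0,r_1)$ is a routine estimate: using $(t')^2\|S\|_{\op} \leq t'/2$ and $(t')^3\|T\|_{\op}\leq t'/\sqrt{13}$, one bounds $\|p_\pm\|^2 \leq (1 + (1/2 + 1/\sqrt{13})^2)t^2$, which is safely below $r_1^2$ since $t^2 \leq \tfrac{4}{13}r_1^2$.

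The heart of the argument is the distance lower bound $d(m,\ex\cap B(0,r_2))\geq (t')^2\|S\|_{\op}$. Parametrizing an arbitrary point of $\ex$ as $q = (v, f(v))$ with $f(v) = S(v^{\otimes 2}) + T(v^{\otimes 3})$, one has $\|q-m\|^2 = \|v\|^2 + \|f(v) - (t')^2 S(u^{\otimes 2})\|^2$, and I would split on the size of $\|v\|$:
\begin{itemize}
\item If $\|v\| \geq (t')^2\|S\|_{\op}$, then trivially $\|q-m\|\geq \|v\|\geq (t')^2\|S\|_{\op}$.
\item If $\|v\| < (t')^2\|S\|_{\op}$, project the vertical component onto the unit direction $e = S(u^{\otimes 2})/\|S\|_{\op}$ and use $|\langle f(v),e\rangle| \leq \|S\|_{\op}\|v\|^2 + \|T\|_{\op}\|v\|^3$. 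The parameter bounds ensure $(t')^2\|S\|_{\op} - \|S\|_{\op}\|v\|^2 - \|T\|_{\op}\|v\|^3 \geq 0$, yielding
$$\|q-m\|^2 \geq \|v\|^2 + \big((t')^2\|S\|_{\op} - \|S\|_{\op}\|v\|^2 - \|T\|_{\op}\|v\|^3\big)^2.$$
Setting $a = (t')^2\|S\|_{\op}$ and expanding, the desired $\|q-m\|^2 \geq a^2$ reduces to
$$\|v\|^2\big(1 - 2a\|S\|_{\op} - 2a\|T\|_{\op}\|v\|\big) + \big(\|S\|_{\op}\|v\|^2+\|T\|_{\op}\|v\|^3\big)^2 \geq 0,$$
and the first factor is positive because $2a\|S\|_{\op} = 2(t')^2\|S\|_{\op}^2 \leq \tfrac12$ and $2a\|T\|_{\op}\|v\| \leq 2a^2\|T\|_{\op} = 2(t')^4\|S\|_{\op}^2\|T\|_{\op} \leq 1/(2\sqrt{13})$.
\end{itemize}
The second stated inequality $(t')^2\|S\|_{\op} \geq t^2\|S\|_{\op} - t^6\|S\|_{\op}\|T\|_{\op}^2$ is immediate from expanding the square.

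The main obstacle is the second case above: the cubic perturbation $T(v^{\otimes 3})$ could a priori draw points of $\ex$ closer to $m$ than points of the quadric $v \mapsto S(v^{\otimes 2})$. Ruling this out requires that both parameter bounds $t\|S\|_{\op}\leq 1/2$ and $t^2\|T\|_{\op}\leq 1/\sqrt{13}$ be in force simultaneously; the second (which originates from the hypothesis $r_2 \leq \tfrac{13^{1/4}}{2}\|T\|_{\op}^{-1/2}$) is precisely what keeps the cubic terms subdominant to the quadratic curvature scale, making the midpoint $m$ a valid distance witness.
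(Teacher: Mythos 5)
Your proposal is correct and follows essentially the same route as the paper: a two–point configuration in the maximal-curvature direction of $S$, with the midpoint of the chord as the distance witness, yielding the bound after a parameter-scaling calculation. The one place you deviate is a streamlining: the paper keeps the chord scale $z$ free, shows $\rad\sigma \leq z + \tfrac12 z^5\tnorm^2$, and then inverts the reparametrization $t = z+\tfrac12 z^5\tnorm^2$; you instead fix $t' = t - \tfrac12 t^5\tnorm^2$ up front and verify $\rad\sigma \leq t$ directly (modulo the small imprecision that $(t')^2(1+(t')^4\tnorm^2)$ equals $t^2(1-x/2)^2\bigl(1 + x(1-x/2)^4\bigr)$, not $t^2(1-x/2)^2(1+x)$ on the nose — but $(1-x/2)^4\leq 1$ for $x\leq 4$ closes this). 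Likewise, in the distance lower bound the paper expands uniformly and uses $\|w\|\leq r_2$ to tame the cubic term, while you case-split on $\|v\|\lessgtr (t')^2\snorm$ and never invoke the $r_2$ restriction, which in fact gives a slightly stronger statement (distance to all of $\ex$, not only $\ex\cap B(0,r_2)$). Both are valid; the substance is the same.
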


\begin{proof}
	Let  $v$ be a unit norm vector in $\mathbf{R}^d$ such that $\|S(v^{\otimes 2})\| = \snorm$. Let 
	$z \leq \min(
		\tfrac{1}{2}\snorm^{-1},\frac{2}{\sqrt{13}}r_1)$. 
	Note that the upper bound on $r_1$ gives a third upper bound for $z$, namely $z \leq 13^{-1/4} \tnorm^{-1/2}  \leq \tnorm^{-1/2}$. We set 
	$$p_1 = (zv,S((zv)^{\otimes 2}))+T((zv)^{\otimes 3})))\;\;\text{and}\;\;p_2 = (-zv,S((-zv)^{\otimes 2})+T((-zv)^{\otimes 3}))$$ 
	and denote the two-point set containing them by $\sigma = \{p_1,p_2\}$. In order to use $\sigma$ to bound $h_{\ex}^{\loc}$ we must (1) check $\sigma \subseteq \ex \cap B(0,r_1)$, (2) find the radius of $\sigma$ and (3) determine $H\left(\hull \sigma \middle| \ex \cap B(0,r_2) \right)$.
	 
	 Firstly, since $\sigma \subseteq M(S,T)$, it is enough to show that $\|p_1\|^2, \|p_2\|^2 \leq  r_1^2$. Using all three bounds on $z$, we can check
	 \begin{align*}
	 \|p_1\|^2, \|p_2\|^2 &\leq z^2+z^4 \snorm^2 + 2z^5\snorm\tnorm+z^6\tnorm^2 \\
	 &\leq 2z^2+2z^3\snorm + z^4 \snorm^2  \text{ by } z \tnorm^{1/2}<1 \\
	 &\leq \frac{13}{4} z^2 ~~ \text{ by }~ z \snorm \leq \frac{1}{2}\\
	 &\leq r_1^2  ~~\text{ by } ~z \leq \frac{2}{\sqrt{13}}r_1.
	 \end{align*}

	  Secondly, we obtain the radius as 
	 \begin{align*}
	\rad \sigma &=\frac12\sqrt{(2z)^2+(2z^3\|T(v^{\otimes 3})\|)^2}\\
	&=z\sqrt{1+z^4\|T(v^{\otimes 3})\|^2}\\
	&\leq z\left(1+\frac12z^4\tnorm^2\right) ~~\text{ since } ~~\sqrt{1+x} \leq 1+ \frac12 x~~\text{for}~x \geq 0\\
	&= z+\frac12z^5\tnorm^2.
	\end{align*}
	
	 Thirdly, we place a lower bound on $H\left(\hull \sigma \middle| \ex \cap B(0,r_2) \right)$.
	 Let $q = \frac{1}{2}(p_1+p_2) \in \hull \sigma$. For any $p = (w,S(w^{\otimes 2})+T(w^{\otimes 3})) \in \ex$ satisfying $\|w\|\leq r_2$, we have 
	\begin{align*}
	d(q,p)^2 &= \|w\|^2 + \|S(w^{\otimes 2})+T(w^{\otimes 3}) - z^2 S(v^{\otimes 2})\|^2 \\
		&\geq z^4 \snorm^2+\|w\|^2(1 - 2z^2\snorm^2 - 2z^2 r_2\snorm\tnorm).
	\end{align*}
	Since $z\snorm \leq 1/{2}$ we have $2z^2\snorm^2 \leq \frac12$. The same condition also allows us to see that $	2z^2r_2\snorm\tnorm \leq zr_2\tnorm \leq \frac12$.
	It follows that $$d(q,p)^2 \geq z^4 \|S\|^2_{\op} = d(q,0)^2$$
	from which 
	we obtain the bound
	$\h\left(\hull \sigma \middle| \ex \cap B(0,r_2) \right) \geq z^2 \|S\|_{\op}$.

	These three calculations yield 
	 $h_\ex^{\loc}(0,r_1,r_2 ; z+\frac12z^5 \tnorm^2) \geq  z^2 \snorm$. 
Now we may reparametrize the argument by setting $t=z+\frac12z^5 \tnorm^2$. Obviously $t \geq z$ so we can invert to obtain $z = t - \frac12z^5 \tnorm^2 \geq t - \frac12t^5 \tnorm^2 $
	and so $h_\ex^{\loc}(0,r_1,r_2;t) \geq  (t - \frac12t^5 \tnorm^2)^2 \snorm \geq  (t^2 - t^6 \tnorm^2) \snorm  $.
	If the bounds given in the statement hold for $t$ , then they will also hold for the smaller value $z$ and so the result is proved.
\end{proof}

We are now in a position to convert this bound for an algebraic approximation to $\em$ into one for the small patch of $\em$ itself. 

We need a stability result first.

\begin{lemma} \label{l:stabl}
Let $\bf X, Y$ be two subsets of $\bbR^D$ and let $r_1, r_2 ,t > 0$ such that $r_1 \leq r_2$. Then, if $p \in \bf X \cap \bf Y$ and $\h({\bf X} \cap B(p,r_2), {\bf Y} \cap B(p,r_2)) \leq \epsilon$ , we have 
$$
h_{\ex}^{\loc}(p,r_1,r_2 ; t) \leq h^{\loc}_{\bf Y}(p,r_1 + \epsilon, r_2 ; t + \epsilon) + 2\epsilon.
$$
\end{lemma}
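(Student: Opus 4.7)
The plan is to unwind the definition of $h^{\loc}_{\ex}(p, r_1, r_2; t)$ and transport every relevant object from $\ex$ to $\ey$ using the Hausdorff hypothesis, then transport the resulting distance estimate back to $\ex$. Fix an arbitrary subset $\sigma \subseteq \ex \cap B(p, r_1)$ with $\rad \sigma \leq t$ and an arbitrary point $y \in \hull \sigma$; it suffices to bound $d(y, \ex \cap B(p, r_2))$ by the right-hand side.

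First, for each $q \in \sigma$ the symmetric Hausdorff hypothesis $\h(\ex \cap B(p,r_2), \ey \cap B(p,r_2)) \leq \epsilon$ provides some $q' \in \ey \cap B(p, r_2)$ with $\|q - q'\| \leq \epsilon$; let $\sigma'$ denote the (set-theoretic) collection of these $q'$. Since $q \in B(p, r_1)$ we have $q' \in B(p, r_1 + \epsilon)$, so $\sigma' \subseteq \ey \cap B(p, r_1 + \epsilon)$. If $B(c, t)$ is a smallest enclosing ball of $\sigma$, then $\|q' - c\| \leq \|q' - q\| + \|q - c\| \leq \epsilon + t$, hence $\rad \sigma' \leq t + \epsilon$, so $\sigma'$ is an admissible set for the definition of $h^{\loc}_{\ey}(p, r_1 + \epsilon, r_2; t + \epsilon)$.

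Next, write $y$ as a (finite, by Carath\'eodory) convex combination $y = \sum_i \lambda_i q_i$ with $q_i \in \sigma$, and set $y' = \sum_i \lambda_i q_i'$, which lies in $\hull \sigma'$. The triangle inequality gives $\|y - y'\| \leq \sum_i \lambda_i \|q_i - q_i'\| \leq \epsilon$. By the definition of $h^{\loc}_{\ey}$ applied to $\sigma'$ and $y' \in \hull \sigma'$, there exists $z \in \ey \cap B(p, r_2)$ with $\|y' - z\| \leq h^{\loc}_{\ey}(p, r_1+\epsilon, r_2; t+\epsilon)$.

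Finally, apply the Hausdorff hypothesis in the reverse direction: since $z \in \ey \cap B(p, r_2)$, there exists $z' \in \ex \cap B(p, r_2)$ with $\|z - z'\| \leq \epsilon$. Chaining the three estimates yields
\[
d(y, \ex \cap B(p,r_2)) \leq \|y - y'\| + \|y' - z\| + \|z - z'\| \leq h^{\loc}_{\ey}(p, r_1+\epsilon, r_2; t+\epsilon) + 2\epsilon.
\]
Taking the supremum over all admissible $\sigma$ and $y$ gives the claim. The only subtle point is the bookkeeping of which ambient ball contains each constructed set; the radius inflations $r_1 \mapsto r_1 + \epsilon$ and $t \mapsto t + \epsilon$ are precisely what is needed to absorb the two $\epsilon$-perturbations introduced when passing from $\ex$ to $\ey$ and back. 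There is no essential obstacle beyond this careful tracking.
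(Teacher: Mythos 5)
Your proof is correct and follows essentially the same route as the paper's, which transports $\sigma$ to a nearby admissible set in $\ey$, invokes the definition of $h^{\loc}_{\ey}$, and transports the distance estimate back, paying $\epsilon$ at each end. The only difference is cosmetic: you carry points across via explicit Carath\'eodory convex combinations, while the paper phrases the same argument as a chain of set inclusions using the identity $\hull(\xi^\epsilon) = (\hull \xi)^\epsilon$.
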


\begin{proof} This is a straightforward adaptation of the proof of Lemma 5 in \cite{ALS}. Indeed let $\sigma \subset \ex \cap B(p,r_1)$ be such that $\rad \sigma \leq t$. Let $\xi = {\bf Y} \cap  B(p,r_2) \cap \sigma^\epsilon$. Since $\h(\ex \cap B(p,r_2), \ey \cap B(p,r_2)) \leq \epsilon$, $\xi$ is not empty and satisfies $\h(\xi, \sigma) \leq \epsilon$. Thus $\xi \subset \ey \cap B(p,r_1+\epsilon)$, and furthermore, by Lemma 16 in \cite{ALS}, we have $\rad\xi \leq t + \epsilon$. We conclude using that
\begin{align*}	
\hull \sigma \subset \hull (\xi^\epsilon) &= (\hull \xi)^\epsilon  \\
&\subset (\ey \cap B(p,r_2))^{h^{\loc}_{\ey} (p,r_1 +\epsilon, r_2 ; t + \epsilon) +\epsilon} \\
&\subset (\ex \cap B(p,r_2))^{h^{\loc}_{\ey} (p,r_1 +\epsilon, r_2 ; t + \epsilon) +2\epsilon}.\qedhere
\end{align*} 
\end{proof}

\begin{lemma} \label{lem5}
	Let $k \geq 4$. There exists $s_2 > 0$ depending only on $\rmin, \bL, k, d$ such that for any $r_2 \leq s_2$ and for any $r_1, t \geq 0$ such that both $r_1 \leq r_2$ and	
	$$C_0 r_2^4 \leq t \leq \frac{2}{\sqrt{13}}r_1$$ 
	for some constant $C_0$ depending on $\rmin,\bL, k ,d $,	we have, for all  $\em \in \ckrl$ and all $p \in \em$,
	\begin{align*}
	h_{\em}^{\loc}(p,r_1,r_2 ; t) \geq  \frac{1}{2}t^2 \|\Pi_p\|_{\op} - C r_2^4 
	\end{align*}
	where $C$ is a constant depending on $\rmin,\bL, k ,d $.
	
	In case $k=3$, 
	we have, for all  $\em \in \ckrl$ and all $p \in \em$,
	\begin{align*}
	h_{\em}^{\loc}(p,r_1,r_2 ; t)  \geq  \frac{1}{2}t^2 \|\Pi_p\|_{\op} - C' r_2^3 
	\end{align*}
	where $C'$ is a constant depending on $\rmin,\bL, d$.
\end{lemma}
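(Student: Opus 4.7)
The plan is to combine the geometric approximation of Lemma \ref{lem4}, the algebraic lower bound of Lemma \ref{l:cdfalgebraic}, and the stability result of Lemma \ref{l:stabl}. After a rigid motion of $\rrr^D$, I may assume $p = 0$ and $T_p\em = \rrr^d \subseteq \rrr^D$, so that the setup of Lemma \ref{lem4} is in force. Let $S$ and $T$ be the data produced by that lemma; then $\|S\|_{\op} = \|\Pi_p\|_{\op}$, and both $\|S\|_{\op}$ and $\|T\|_{\op}$ are bounded by a constant depending only on $\rmin, \bL, k, d$, thanks to Lemma \ref{l:AL-param}. Writing $\epsilon := C_1\, r_2^{k \wedge 4}$, with $C_1$ the constant produced by Lemma \ref{lem4}, we have
$$H\bigl(\em \cap B(0, r_2),\ M(S,T) \cap B(0, r_2)\bigr) \leq \epsilon$$
as soon as $r_2 \leq s_1$ (using $T \equiv 0$ in the case $k = 3$).

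The first step is to use the stability Lemma \ref{l:stabl} with $\ex = M(S,T)$, $\ey = \em$, and the shifted radii $r_1' = r_1 - \epsilon$, $t' = t - \epsilon$ to transfer the problem from $\em$ to $M(S,T)$:
$$h_\em^{\loc}(0, r_1, r_2; t) \geq h_{M(S,T)}^{\loc}(0, r_1 - \epsilon, r_2; t - \epsilon) - 2\epsilon.$$
Positivity of $r_1 - \epsilon$ and $t - \epsilon$ is secured by choosing the constant $C_0$ in the hypothesis $C_0 r_2^{k \wedge 4} \leq t$ large enough compared to $C_1$; indeed $r_1 \geq \tfrac{\sqrt{13}}{2} t \geq \tfrac{\sqrt{13}}{2} C_0 r_2^{k \wedge 4}$.

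Next, I verify that the shifted parameters $(r_1 - \epsilon,\, r_2,\, t - \epsilon)$ still satisfy the hypotheses of Lemma \ref{l:cdfalgebraic}. The inequality $t - \epsilon \leq \tfrac{2}{\sqrt{13}}(r_1 - \epsilon)$ follows from $t \leq \tfrac{2}{\sqrt{13}} r_1$ since $\tfrac{2}{\sqrt{13}} < 1$. The remaining conditions $r_2 \leq \tfrac{13^{1/4}}{2}\|T\|_{\op}^{-1/2}$ and $t - \epsilon \leq \tfrac12 \|S\|_{\op}^{-1}$ are arranged by choosing $s_2$ small enough (depending only on $\rmin, \bL, k, d$), using $t \leq \tfrac{2}{\sqrt{13}} r_2 \leq \tfrac{2}{\sqrt{13}} s_2$ and the uniform bounds on $\|S\|_{\op}$ and $\|T\|_{\op}$. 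Lemma \ref{l:cdfalgebraic} then yields
$$h_{M(S,T)}^{\loc}(0, r_1 - \epsilon, r_2; t - \epsilon) \geq (t - \epsilon)^2 \|S\|_{\op} - (t - \epsilon)^6 \|S\|_{\op}\|T\|_{\op}^2.$$

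To conclude, I estimate the error terms. Expanding $(t - \epsilon)^2 \geq t^2 - 2 t \epsilon$ and using $t \leq \tfrac{2}{\sqrt{13}} r_2$ together with the uniform bounds on $\|S\|_{\op}$ and $\|T\|_{\op}$, each of the quantities $2 t \epsilon \|S\|_{\op}$, $t^6 \|S\|_{\op}\|T\|_{\op}^2$, and $2\epsilon$ is bounded by a constant (depending only on $\rmin, \bL, k, d$) times $r_2^{k \wedge 4}$, provided $s_2 \leq 1$. Combining,
$$h_\em^{\loc}(0, r_1, r_2; t) \geq t^2 \|\Pi_p\|_{\op} - C\, r_2^{k \wedge 4} \geq \tfrac12\, t^2 \|\Pi_p\|_{\op} - C\, r_2^{k \wedge 4},$$
which is the target inequality. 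The case $k = 3$ proceeds identically with $T \equiv 0$: the $\|T\|_{\op}^{-1/2}$ condition becomes vacuous and the $(t - \epsilon)^6$ term drops out; the error exponent becomes $3$. The main obstacle is the parameter bookkeeping -- calibrating $C_0$ and $s_2$ so that, after the shifts introduced by Lemma \ref{l:stabl}, the hypotheses of Lemma \ref{l:cdfalgebraic} remain valid \emph{and} the remainder terms genuinely collapse to $O(r_2^{k \wedge 4})$.
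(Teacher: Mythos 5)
Your proof is correct and follows essentially the same route as the paper's: combine Lemma~\ref{lem4} (Hausdorff closeness of $\em$ to its cubic algebraic model $M(S,T)$), the stability estimate of Lemma~\ref{l:stabl} to transfer the $h^{\loc}$ lower bound, and the explicit algebraic bound of Lemma~\ref{l:cdfalgebraic}, then absorb the shift errors into $O(r_2^{k\wedge 4})$. Your final step even yields the slightly stronger constant $t^2\|\Pi_p\|_{\op}$ before loosening to $\tfrac12 t^2\|\Pi_p\|_{\op}$, which is the same observation implicit in the paper's chain of inequalities.
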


\begin{proof}By applying an isometry of $\rrr^D$, we may assume that $p=0$ and that $T_p \em = \rrr^d \subseteq \rrr^D$. The result will then follow from Lemmata \ref{lem4} and \ref{l:cdfalgebraic}  in addition to the Hausdorff stability property for $h^{\loc}$ (Lemma \ref{l:stabl}). Take $r_2 > 0$ smaller than $s_1$ (from Lemma \ref{lem4}) and than $ \frac{13^{1/4}}{2{L'_3}^{1/2}}$ (from Lemma \ref{l:cdfalgebraic}). 
	 In the case $k \geq 4$, where $\Phi_p$ is the expansion described in Lemma \ref{l:AL-param}, $S=\frac12{\rm d}_0^2\Phi_p=\Pi_p$, $T=\frac16{\rm d}_0^3\Phi_p$ and $C_0$ is the constant from the statement of Lemma \ref{lem4}, we have
	 
	\begin{align*}
	\ h_{\em}^{\loc}(0,r_1,r_2; t) &\geq h_{M(S,T)}^{\loc}\left(0,r_1 - C_0 s^{4},r_2 ; t - C_0 r_2^4\right) - 2C_0 r_2^4 \\
	& \geq \left(t - C_0 r_2^4\right)^2 \snorm - \left(t - C_0 r_2^4\right)^6 \snorm \tnorm^2 - 2C_0 r_2^4\\
	&\geq  \frac{1}{2} \|\Pi_p\|_{\op} t^2 - C r_2^4.
	\end{align*}

where $C$ depends only on $R_{\min}, \bL, d, k$. The first inequality only holds if $C_0 r_2^4 \leq t$.
In the case $k=3$ the result is obtained similarly.
\end{proof}
We conclude with the proof of Proposition \ref{taylorpoly}.
\begin{proof}[Proof of Proposition \ref{taylorpoly}]
	By taking 
	$$t \leq \frac14 s_2 \wedge (4^4 C_0)^{-1/3} $$ 
	(from Lemma \ref{lem5}), and setting $r_1=2t$ and $r_2 = 3t$, we have that
	$$
	C_0 r_2^4 \leq t \leq \frac2{\sqrt{13}}r_1 \,\, \text{and} \,\, t+r_1 \leq r_2
	$$ 
	so that the hypotheses of both \ref{l:ballscenteredonem} and \ref{lem5} hold. It is now immediate that if $k \geq 4$
\begin{align*}
	h_{\em}(t) &= \sup_{p \in \em} h_\em^{\loc}(p,r_1,r_2;t) \\
	&\geq \sup_{p \in \em} \big(\frac{1}{2} \|\Pi_p\|_{\op} t^2 - C r_2^4\big) \\
	&= \frac{t^2}{2R_{\ell}} - 3^4 C t^4
\end{align*}
	where $C$ is a constant depending on $\rmin,\bL,d,k$,
	while if $k=3$
	\begin{align*}
	h_{\em}(t)  \geq  \frac{1}{2R_{\ell}}t^2 - C' t^3, 
	\end{align*}
	where $C'$ is a constant depending on $\rmin,\bL$.
	 On the other hand, Proposition \ref{p:newbound} provides an upper bound which will hold for all $t < \rmin$:
	\begin{align*} \label{line}
	h_\em(t) &\leq R_\ell - \sqrt{R_\ell^2 - t^2} \leq \frac{t^2}{2R_\ell} + \frac{t^4}{2R_\ell^3} \\
	&\leq \frac{t^2}{2R_\ell} + \frac{t^4}{2\rmin^3}.\qedhere
	\end{align*}
\end{proof}

\section{Approximating the reach} \label{sec: esti}

Recall item 3 of the properties of $h_{{\bf X}}$ given after Definition \ref{def: convexity defect function} which guarantees that the convexity defect function is stable with respect to perturbations of the manifold which are small in the Hausdorff distance. This allows one to approximate the reach of a submanifold $\em \subseteq \rrr^D$ from a nearby subset $\tildem$.

Given a submanifold $\em$ and another subset $\tildem$ (not necessarily a manifold) so that $H (\em, \tildem) < \epsilon$, we can calculate the convexity defect function $h_{\tildem}$.
This can then be used to approximate $R_{\ell} = \left( h_{\em}''(0) \right)^{-1}$ and $\rwfs = \inf \left\lbrace t \colon h_{\em}(t)=t, t>0 \right\rbrace$.
We can approximate the local reach via $$h_{\em}''(0) \approx 2\frac{h_{\tildem}(\Delta)}{\Delta^2}$$ for some choice of step size $\Delta$. Proposition \ref{taylorpoly} gives the following bound on the error.

\begin{proposition}\label{localloss}
	Let $\em \in \ckrl$. Let $0 < \epsilon < \Delta < 1$ be such that $\epsilon + \Delta$ is small enough to satisfy the hypotheses constraining the variable $t$ in Proposition \ref{taylorpoly}. Let $\tildem \subseteq \rrr^D$ be such that $H (\em, \tildem) < \epsilon$.
	
	Then
	\begin{itemize}
	\item If $k \geq 4$, $\left| h_{\em}''(0) - 2\frac{h_{\tildem}(\Delta)}{\Delta^2}\right| \leq A \epsilon \Delta^{-2}+ B \Delta^2$
	and, in particular, if $\Delta = \epsilon^{1/4}$,
		$$\left| h_{\em}''(0) - 2\frac{h_{\tildem}(\Delta)}{\Delta^2}\right| \leq (A+B) \epsilon^{1/2}$$
	\item If $k=3$,
	$\left| h_{\em}''(0) - 2\frac{h_{\tildem}(\Delta)}{\Delta^2}\right| \leq A \epsilon \Delta^{-2}+ B \Delta$
	and, in particular, if $\Delta = \epsilon^{1/3}$,
	$$\left| h_{\em}''(0) - 2\frac{h_{\tildem}(\Delta)}{\Delta^2}\right| \leq (A+B) \epsilon^{1/3}$$
	\end{itemize}
	where the constants $A$ and $B$ depend only on $\rmin,\mathbf{L}$.
\end{proposition}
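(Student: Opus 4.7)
The plan is to use stability of the convexity defect function (item 3 in the list after Definition \ref{def: convexity defect function}) to transfer the Taylor bound from Proposition \ref{taylorpoly}, which holds for $h_{\em}$, to the observable $h_{\tildem}$, and then to tune $\Delta$ to balance the two resulting error sources.

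First I would note that Proposition \ref{taylorpoly} already gives $h_{\em}''(0) = 1/R_\ell$ (in the pointwise sense used in that statement), together with
\begin{equation*}
\Big| h_{\em}(s) - \tfrac{s^2}{2 R_\ell} \Big| \leq C \, s^{k \wedge 4}
\end{equation*}
for $s$ below some threshold depending on $\rmin,\bL,d,k$. Since $\epsilon + \Delta$ is assumed small enough to satisfy this constraint, both of $s = \Delta \pm \epsilon$ and $s = \Delta$ lie in the valid range.

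Next I would apply the Hausdorff stability property (swapping the roles of $\em$ and $\tildem$) to sandwich the observable:
\begin{equation*}
h_{\em}(\Delta - \epsilon) - 2\epsilon \;\leq\; h_{\tildem}(\Delta) \;\leq\; h_{\em}(\Delta + \epsilon) + 2\epsilon ,
\end{equation*}
which is legitimate because $\Delta > \epsilon$. Plugging the Taylor bound for $h_{\em}$ at $\Delta \pm \epsilon$ into both sides yields
\begin{equation*}
\tfrac{(\Delta-\epsilon)^2}{2R_\ell} - C(\Delta+\epsilon)^{k\wedge 4} - 2\epsilon \;\leq\; h_{\tildem}(\Delta) \;\leq\; \tfrac{(\Delta+\epsilon)^2}{2R_\ell} + C(\Delta+\epsilon)^{k\wedge 4} + 2\epsilon .
\end{equation*}
Subtracting $\Delta^2/(2R_\ell)$, multiplying by $2/\Delta^2$, and using $R_\ell \geq \Rm$ together with the expansion $(\Delta \pm \epsilon)^2 - \Delta^2 = \pm 2 \Delta \epsilon + \epsilon^2$ produces three contributions: a cross-term of order $\epsilon/\Delta$, a Taylor-remainder term of order $\Delta^{(k\wedge 4) - 2}$, and a pure stability term of order $\epsilon/\Delta^2$. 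Since $\epsilon < \Delta < 1$, the cross-term is absorbed into the stability term, giving
\begin{equation*}
\Big| \, h_{\em}''(0) - \tfrac{2 h_{\tildem}(\Delta)}{\Delta^2} \Big| \;\leq\; A \, \epsilon \Delta^{-2} + B \, \Delta^{(k\wedge 4) - 2} ,
\end{equation*}
with $A, B$ depending only on $\rmin, \bL$ (and implicitly on $d,k$).

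Finally, I would substitute $\Delta = \epsilon^{1/(k \wedge 4)}$: for $k \geq 4$ the exponent $(k\wedge 4)-2 = 2$ and both terms become $\epsilon^{1/2}$; for $k = 3$ the exponent is $1$ and both terms become $\epsilon^{1/3}$. The main technical nuisance is just bookkeeping: keeping the two regimes $k \geq 4$ and $k = 3$ parallel, checking that the shifted arguments $\Delta \pm \epsilon$ remain within the admissible range of Proposition \ref{taylorpoly}, and carefully verifying that the cross-term $\epsilon/\Delta$ is indeed dominated by $\epsilon/\Delta^2$ under the hypothesis $\Delta < 1$ — none of which should present any genuine difficulty.
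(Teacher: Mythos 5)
Your proposal is correct and follows essentially the same route as the paper's own proof: apply the Hausdorff stability property to sandwich $h_{\tildem}(\Delta)$ between $h_{\em}(\Delta\mp\epsilon)\mp2\epsilon$, substitute the Taylor bound from Proposition \ref{taylorpoly}, normalize by $2/\Delta^2$, and tune $\Delta=\epsilon^{1/(k\wedge 4)}$ to balance the $\epsilon\Delta^{-2}$ and $\Delta^{(k\wedge 4)-2}$ terms. The only cosmetic difference is that the paper tracks explicit numerical coefficients in the expansion rather than arguing by absorbing the cross-term, but the decomposition and the balancing step are identical.
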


\begin{proof}
	Set $\kappa = h_{\em}''(0)$ and $\tilde{\kappa} = 2\frac{h_{\tildem}(\Delta)}{\Delta^2}$.
	Comparing $\em$ to $\tildem$, we obtain from stability that $$2\frac{h_{\em}(\Delta-\epsilon) - 2 \epsilon}{\Delta^2} 
	\leq \tilde{\kappa} 
	\leq 2\frac{h_{\em}(\Delta+\epsilon) + 2 \epsilon}{\Delta^2}.$$
	
	In the case $k \geq 4$, Proposition \ref{taylorpoly} states that $\left| h_{\em}(t) - \frac{\kappa}{2} t^2 \right| \leq Ct^4$, for some constant $C$ depending only on $\rmin,\bL$.
	It follows that
	$$\frac{\kappa (\Delta - \epsilon )^2 - 2C(\Delta - \epsilon)^4 - 4\epsilon}{\Delta^2} 
	\leq \tilde{\kappa}
	\leq \frac{\kappa (\Delta + \epsilon )^2 + 2C(\Delta + \epsilon)^4 + 4\epsilon}{\Delta^2}. $$
	
	Expanding and using that $\epsilon, \Delta < 1$, we obtain
	\begin{align*}
	\left| \kappa - \tilde{\kappa} \right| &\leq  2C\Delta^2 + (3 \kappa + 30 C + 4 )\epsilon\Delta^{-2}.
	\end{align*}
	Similarly, in the case $k=3$, we obtain
		\begin{align*}
	\left| \kappa - \tilde{\kappa} \right| &\leq  2C'\Delta + (3 \kappa + 14 C' + 4 )\epsilon\Delta^{-2}
	\end{align*}
	where $C'$ is again a constant depending only on $\rmin,\bL$. Since $\kappa \leq 1/\rmin$, the constants may be chosen to be $A = \max \lbrace 3/\rmin + 30C + 4, 3/\rmin +14C' + 4 \rbrace$ and  $B = \max \lbrace 2C, 2C' \rbrace$. They depend only on $\rmin,\bL$.
	
	Now set $\Delta = \epsilon^p$ and seek the $p$ yielding the fastest rate of convergence of the error bound to zero. Since the exponent in the first term increases with respect to $p$ while that in the second decreases, the fastest rate is obtained by requiring the two exponents to be equal, so that $p=1/4$ for $k \geq 4$ and $p=1/3$ for $k=3$.
\end{proof}

At the weak feature size the convexity defect function satisfies $h_{\em}(t)=t$. The stability given by item 3 of the properties of $h_{{\bf X}}$ given after Definition \ref{def: convexity defect function} guarantees that the graph of $h_{\tildem}$ lies close to that of $h_{\em}$, but this alone cannot be used to approximate the first intersection of the graph of $h_{\em}$ with the diagonal. The graph of $h_{\em}$ could approach the diagonal very slowly before intersecting it, so that the error in approximating an intersection time based on the graph of $h_{\tildem}$ is not necessarily small.

However, we are only interested in approximating the weak feature size if it yields the reach, i.e. when $\rwfs < R_{\ell}$. Corollary \ref{c:discontinuity} guarantees the existence of a discontinuity in $h_{\em}$ at $\rwfs$; in this case the function $h_{\em}$ must jump at $\rwfs$ from being bounded above by a quarter circle of radius $R_{\ell}$ to intersecting the diagonal. This feature makes it possible to bound the error in an approximation. We begin with a simple lemma.

\begin{lemma}\label{l:intersection}
Fix $R > 0$. Let the intersection points of the line $y=x-6\epsilon$ and the  quarter-circle $y = R - \sqrt{R^2 - x^2}$ be $(x_0, y_0)$ and $(x_1,y_1)$. Then there is some $\epsilon_0$, which depends only on $R$, so that for $0 < \epsilon < \epsilon_0$ the bounds $x_0 \leq \frac{25}{4} \epsilon$ and $x_1 \geq  R -\frac{\epsilon}{4}$ hold. 
\end{lemma}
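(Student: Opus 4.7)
\smallskip

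\textbf{Proof plan for Lemma \ref{l:intersection}.} The plan is to solve the intersection condition explicitly and then observe that the two desired bounds reduce to a single inequality.

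Setting $x - 6\epsilon = R - \sqrt{R^2 - x^2}$ and isolating the square root gives $\sqrt{R^2 - x^2} = (R-x) + 6\epsilon$. Squaring and simplifying yields the quadratic
$$
x^2 - (R + 6\epsilon)\,x + 6\epsilon R + 18\epsilon^2 = 0,
$$
whose discriminant is $(R+6\epsilon)^2 - 4(6\epsilon R + 18\epsilon^2) = R^2 - 12\epsilon R - 36\epsilon^2$. For $\epsilon$ small enough (depending only on $R$) this discriminant is positive, so
$$
x_{0} = \tfrac{1}{2}\!\left( R + 6\epsilon - \sqrt{R^2 - 12\epsilon R - 36\epsilon^2}\right), \qquad x_{1} = \tfrac{1}{2}\!\left( R + 6\epsilon + \sqrt{R^2 - 12\epsilon R - 36\epsilon^2}\right).
$$

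Next I would observe that the desired inequality $x_0 \leq \tfrac{25}{4}\epsilon$ is equivalent, after rearranging, to
$$
\sqrt{R^2 - 12\epsilon R - 36\epsilon^2} \;\geq\; R - \tfrac{13}{2}\epsilon,
$$
and, remarkably, the inequality $x_1 \geq R - \tfrac{1}{4}\epsilon$ rearranges to exactly the same condition. So both claims reduce to proving this one inequality.

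Finally, assuming $\epsilon < 2R/13$ so that the right-hand side is nonnegative, squaring reduces the inequality to
$$
R^2 - 12\epsilon R - 36\epsilon^2 \;\geq\; R^2 - 13 R \epsilon + \tfrac{169}{4}\epsilon^2,
$$
i.e.\ $R\epsilon \geq \tfrac{313}{4}\epsilon^2$, i.e.\ $\epsilon \leq \tfrac{4R}{313}$. Choosing $\epsilon_0 = 4R/313$ (which depends only on $R$) establishes both bounds simultaneously. There is no real obstacle here: the only ``trick'' is noticing that both endpoints of the intersection interval are controlled by the same squared inequality, after which the verification is routine algebra.
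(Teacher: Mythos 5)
Your proof is correct and follows essentially the same route as the paper's: form the quadratic, take the $\pm$ solutions, and observe that both bounds reduce to a single lower estimate on the square root, which is verified by squaring. A minor improvement in your version is that you make the threshold explicit ($\epsilon_0 = 4R/313$), whereas the paper leaves it as "sufficiently small $\epsilon$."
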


\begin{proof}
	The equation $x-6\epsilon = R - \sqrt{R^2 - x^2}$
	can be rearranged to give the quadratic $2 x^2 - (2 R +12 \epsilon) x + (36\epsilon + 12 R  ) \epsilon = 0$ with solutions
$$
x = \frac{2 R +12 \epsilon \pm \sqrt{(2R - 12\epsilon)^2 -288 \epsilon^2 }}{4}.
$$

For sufficiently small values of $\epsilon$, we have the bound
$$	2R -13 \epsilon 
\leq 2R -12 \epsilon - \frac{288 \epsilon^2}{4R -24\epsilon} 
\leq \sqrt{(2R - 12\epsilon)^2 -288 \epsilon^2 } $$
so that the solutions $x_0$ and $x_1$ are bounded by
\begin{align*}
x_0 &\leq \frac{2 R +12 \epsilon - (2R - 13 \epsilon)}{4} = \frac{25}{4} \epsilon\\
x_1 &\geq \frac{2 R +12 \epsilon + (2R - 13 \epsilon)}{4} = R -\frac{\epsilon}{4}.  \qedhere
\end{align*}
\end{proof}
It is clear from the proof that for any $\delta>0$ there is an $\epsilon >0$ so that the bounds can be taken to be $(6+\delta)\epsilon$ and $R_{\ell}-\delta\epsilon$. It is sufficient to proceed with $\delta = 1/4$ and so we will do so.

\begin{proposition}\label{globalloss}
	Let $\em$ be such that $R({\bf M}) > R_{\min}$ and let $\epsilon < \frac{2}{9} \rmin$ be a positive number small enough that the conclusion of Lemma \ref{l:intersection} holds for $R=\rmin$. Let $\tildem \subseteq \rrr^D$ be such that $H (\em, \tildem) < \epsilon$.
	
	Now suppose further that $\em$ is such that $R_{\ell} - \rwfs > \frac{9}{4} \epsilon$. Then the value $r =  \inf \left\lbrace t \geq \frac{22}{4} \epsilon : h_{\tildem}(t) \geq t - 3 \epsilon \right\rbrace$ satisfies the bound $\left| \rwfs - r \right| \leq \epsilon$.
\end{proposition}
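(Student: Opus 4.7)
The plan is to establish the two inequalities $r \geq \rwfs - \epsilon$ and $r \leq \rwfs + \epsilon$ separately, combining the Hausdorff stability estimate
$$h_\em(t-\epsilon)-2\epsilon \leq h_{\tildem}(t) \leq h_\em(t+\epsilon)+2\epsilon$$
(item 3 of the properties of $h_{\bf X}$) with the quarter-circle upper bound from Proposition~\ref{p:newbound}. Observe first that because $R_\ell > \rwfs + \tfrac94\epsilon$ we have $R(\em) = \min(R_\ell, \rwfs) = \rwfs \geq \rmin$, and the standing hypothesis $\epsilon < \tfrac29 \rmin$ then forces $\rwfs > \tfrac92\epsilon$; this will make the candidate times $t^*+\epsilon$ and $t+\epsilon$ considered below automatically exceed the threshold $\tfrac{22}{4}\epsilon$.

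For the upper bound on $r$, I use the characterization $\rwfs = \inf\{t>0 : h_\em(t)=t\}$ to pick $t^{\ast} \geq \rwfs$, arbitrarily close to $\rwfs$, with $h_\em(t^{\ast})=t^{\ast}$. The left-hand stability inequality applied at $t = t^{\ast} + \epsilon$ then gives
$$h_{\tildem}(t^{\ast}+\epsilon) \geq h_\em(t^{\ast}) - 2\epsilon = t^{\ast} - 2\epsilon = (t^{\ast}+\epsilon) - 3\epsilon,$$
so that $t^{\ast}+\epsilon$ belongs to the set over which $r$ is the infimum. Letting $t^{\ast} \searrow \rwfs$ yields $r \leq \rwfs + \epsilon$.

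For the lower bound, I show that $h_{\tildem}(t) < t-3\epsilon$ strictly for every $t \in [\tfrac{22}{4}\epsilon, \rwfs-\epsilon)$, which forces $r \geq \rwfs-\epsilon$. For such $t$ one has $s := t+\epsilon < \rwfs \leq R(\em)$, so the right-hand stability inequality and Proposition~\ref{p:newbound} combine to give
$$h_{\tildem}(t) \leq h_\em(s) + 2\epsilon \leq R_\ell - \sqrt{R_\ell^2-s^2} + 2\epsilon,$$
and the desired inequality reduces to $R_\ell - \sqrt{R_\ell^2-s^2} < s - 6\epsilon$, i.e.\ to placing $s$ strictly between the two intersection points $x_0, x_1$ of this quarter-circle with the line $y=x-6\epsilon$.

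The final step is then a numerical verification that $s \in (x_0,x_1)$ throughout the range in question. Lemma~\ref{l:intersection} furnishes $x_0 \leq \tfrac{25}{4}\epsilon$ and $x_1 \geq R_\ell - \tfrac{\epsilon}{4}$; the lower endpoint is then handled by $s \geq \tfrac{22}{4}\epsilon + \epsilon = \tfrac{26}{4}\epsilon > \tfrac{25}{4}\epsilon$, and the upper endpoint by $s < \rwfs < R_\ell - \tfrac{9}{4}\epsilon \leq R_\ell - \tfrac{\epsilon}{4}$. The one piece of care needed is that Lemma~\ref{l:intersection} must be invoked with $R = R_\ell$ rather than the $R = \rmin$ supplied by the hypothesis; this is legitimate because the threshold $\epsilon_0(R)$ in its proof is proportional to $R$ and $R_\ell \geq R(\em) \geq \rmin$. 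The only subtle point is this bookkeeping, namely checking that the constants $\tfrac{22}{4}\epsilon$, $3\epsilon$ and $\tfrac94\epsilon$ appearing in the statement are tuned exactly to the $\tfrac{25}{4}\epsilon$, $6\epsilon$ and $\tfrac{\epsilon}{4}$ coming out of Lemma~\ref{l:intersection}.
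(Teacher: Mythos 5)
Your proof is correct and follows essentially the same route as the paper: the decomposition into the two one-sided bounds, the reliance on Hausdorff stability of $h$, Proposition~\ref{p:newbound}, and Lemma~\ref{l:intersection}, and the numerical bookkeeping with the constants $\tfrac{22}{4}\epsilon$, $3\epsilon$, $\tfrac{25}{4}\epsilon$, $R_\ell-\tfrac{\epsilon}{4}$ are all identical. The one small refinement is in the lower bound, where you prove $h_{\tildem}(t)<t-3\epsilon$ uniformly for $t\in[\tfrac{22}{4}\epsilon,\rwfs-\epsilon)$ rather than evaluating at the infimum $r$ itself (quietly sidestepping whether that infimum is attained, a point the paper glosses over), and you also make explicit why Lemma~\ref{l:intersection} may be invoked with $R=R_\ell$ rather than the $R=\rmin$ supplied by the hypothesis --- a substitution the paper also makes but does not justify.
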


\begin{proof}
	We first claim that $r \leq \rwfs + \epsilon$. To see this, suppose that $\rwfs + \epsilon < r$. Then, by the definition of $r$, either $\rwfs + \epsilon  <  \frac{22}{4} \epsilon$, which by the assumption on $\epsilon$ cannot happen, or $h_{\tildem}(\rwfs + \epsilon) < \rwfs -2\epsilon$  in which case
	$\rwfs = h_{\em}(\rwfs ) \leq h_{\tildem}(\rwfs + \epsilon) + 2 \epsilon < \rwfs$,
	which is a contradiction. 
	
	Now let us seek a lower bound for $r$, which relies on the fact that $R=\rwfs$.
	Note that $h_{\em} (r + \epsilon) \geq h_{\tildem} (r) - 2 \epsilon \geq r -5 \epsilon$.
	If the additional inequality 
	$$
	r -5 \epsilon \geq  R_{\ell} - \sqrt{R_{\ell}^2 - (r + \epsilon)^2},
	$$
	holds, so that $h_{\em} (r + \epsilon) > R_{\ell} - \sqrt{R_{\ell}^2 - (r + \epsilon)^2}$, then by Proposition \ref{p:newbound} we would have $r + \epsilon > R = \rwfs$, providing the required lower bound $r  \geq \rwfs - \epsilon$ and completing the proof.
	By Lemma \ref{l:intersection}, this additional inequality holds whenever $$\frac{25}{4} \epsilon \leq r + \epsilon \leq R_{\ell} -\frac{\epsilon}{4}.$$
	The first bound is true by the definition of $r$.
	The second follows from the upper bound for $r$ and the gap between $\rwfs$ and $R_{\ell}$: $r \leq \rwfs + \epsilon \leq R_{\ell} - \frac{5}{4} \epsilon$.
\end{proof}

\section{Minimax rates for reach estimators: Upper bounds} \label{sec: upperbounds}

Every submanifold has a natural uniform probability distribution given by its volume measure. We consider probability distributions with density bounded above and below with respect to this volume measure.  Recall the class of manifolds $\ckrl$ studied in \cite{AL}: $d$-dimensional compact, connected, submanifolds of $\rrr^D$ with a lower bound on the reach and admitting a local parametrization with bounded terms in the Taylor expansion (see Definition \ref{AL-framework}).

\begin{definition} \label{def:pk}
	For $k \geq 3$, $\rmin > 0$, $\mathbf{L} = (L_{\perp}, L_3, \ldots, L_k)$ and $0 < f_{\mathrm{min}} \leq f_{\mathrm{max}} < \infty$, we let $\mathscr{P}^k_{\rmin, \mathbf{L}}(f_{\mathrm{min}}, f_{\mathrm{max}})$ denote the set of distributions $P$ supported on some $\em \in \ckrl$ which are absolutely continuous with respect to the volume measure $\mu_{\em}$, with density $f$ 
	taking values $\mu_{\em}$-a.s. in $[f_{\mathrm{min}},f_{\mathrm{max}}]$.
\end{definition}

This will be abbreviated by $\mathscr{P}^k$ where there is no ambiguity.
We define the submodels $\cP^k_\alpha$ to be those distributions supported on elements of $\cM^k_\alpha$ (the classes defined in Section \ref{geomframework}). These submodels are such that $\cP^k = \cup_{\alpha \geq 0} \cP^k_\alpha$.

The following lemma shows that the uniform lower bound, $f_{\min}$, on the density of elements of $\mathscr{P}^k$ provides an upper bound $R_{\max}$ for both $R_\ell$ and $\rwfs$, which we will use in our estimators later in the section.

\begin{lemma}\label{lem: rmax}
There exists $R_{\max}$ depending on $d, f_{\min}, \rmin$ so that, if $P \in \mathscr{P}^k$ has support $\bf M$, then $R_{\ell}, \rwfs \leq R_{\max}$.
\end{lemma}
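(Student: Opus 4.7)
The plan is to reduce both $R_\ell$ and $\rwfs$ to an upper bound on the Euclidean diameter $\diam(\em)$, which is controlled in turn through the density constraint on $P$. Since $P$ has density at least $f_{\min}$ with respect to $\mu_\em$, $\vol(\em) \leq 1/f_{\min}$. Conversely, the reach condition $R(\em) \geq \rmin$ forces a matching lower bound $\vol(\em) \geq c_d \rmin^{d-1} \diam(\em)$: pick $p, q \in \em$ realizing the diameter, connect them by a length-minimizing geodesic in $\em$ of length at least $\|p-q\| = \diam(\em)$, and place disjoint intrinsic geodesic balls in $\em$ of radius $\rmin/4$ centered at points spaced $\rmin/2$ apart along this geodesic. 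By the local graph parametrization of Lemma~\ref{l:AL-param}, each such ball has volume at least $c_d \rmin^d$, and the number of balls is at least proportional to $\diam(\em)/\rmin$. Combining the two estimates yields $\diam(\em) \leq C(d, f_{\min}, \rmin)$.

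For the local reach, I would use a direct geodesic calculation. For any $p \in \em$ and unit $v \in T_p \em$, the unit-speed geodesic $\gamma$ with $\gamma(0)=p$, $\gamma'(0)=v$ has ambient acceleration $\gamma''(t) = \Pi_{\gamma(t)}(\gamma'(t),\gamma'(t))$, so $\|\gamma''\| \leq 1/R_\ell$ by definition of $R_\ell$. Integrating twice in $\rrr^D$ gives $\|\gamma(t) - p - tv\| \leq t^2/(2R_\ell)$, and evaluating at $t = R_\ell/2$ yields $\|\gamma(R_\ell/2) - p\| \geq 3R_\ell/8$, so $R_\ell \leq (8/3)\,\diam(\em)$. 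For the weak feature size, I would exhibit a critical point of $d_\em$ whose value is at most $\diam(\em)$. For any $y = \sum_i \lambda_i x_i \in \hull(\em)$ with $x_i \in \em$, one has $d_\em(y) \leq \min_i \|y-x_i\| \leq \diam(\em)$, so the maximum $M$ of $d_\em$ on the compact set $\hull(\em)$ satisfies $M \leq \diam(\em)$, and $M > 0$ since a compact connected $d$-submanifold with $d<D$ is not convex. If this maximum is attained at an interior point $y_0$ of $\hull(\em)$, a separating-hyperplane argument forces $y_0 \in \hull(\Gamma_\em(y_0))$, equivalently $\nabla_\em(y_0) = 0$: otherwise there is a unit vector $w$ with $\langle y_0 - x, w\rangle > 0$ for every $x \in \Gamma_\em(y_0)$, so moving $y_0$ a small amount in direction $w$ strictly increases $d_\em$, contradicting maximality. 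Hence $\rwfs \leq d_\em(y_0) \leq \diam(\em)$; the case where the max lies on $\partial\hull(\em)$ is handled by first maximizing $d_\em$ over a thickening $\hull(\em)^\varepsilon$ and passing to the limit $\varepsilon \to 0$.

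Combining the two bounds gives $R_\ell, \rwfs \leq (8/3)\diam(\em) \leq R_{\max}$, with $R_{\max}$ depending only on $d, f_{\min}, \rmin$. The main obstacle is the weak feature size estimate: whereas the local reach is controlled by a clean local geodesic computation, the wfs is a global invariant whose upper bound requires producing an accessible critical point of $d_\em$ with controlled value. The interior-maximum case is the standard separating-hyperplane argument, while the boundary-maximum case requires the thickening refinement to produce a nearby critical point of controlled value.
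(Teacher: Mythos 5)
Your approach is genuinely different from the paper's, which dispatches both quantities by direct citation: $R_\ell \leq (\vol \em/\omega_d)^{1/d}$ from Almgren's isoperimetric inequality, and $\diam(\em) \leq C(d)f_{\min}^{-1}\rmin^{1-d}$ from Aamari--Levrard \cite[Lemma~2.2]{AL18} together with the (unproved, but standard) inequality $\rwfs \leq \tfrac12\diam(\em)$. Your version is more self-contained, which has pedagogical value. The packing argument for $\diam(\em)$ is essentially the content of \cite[Lemma~2.2]{AL18} and is fine. The geodesic estimate $R_\ell \leq \tfrac83\diam(\em)$ is correct and cleanly replaces the appeal to Almgren: since $\em$ is compact without boundary, unit-speed geodesics exist for all time, $\|\gamma''(t)\| = \|\Pi_{\gamma(t)}(\gamma',\gamma')\| \leq 1/R_\ell$, and Taylor's theorem gives $\|\gamma(R_\ell/2)-p\| \geq 3R_\ell/8$.

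There is, however, a real gap in the weak feature size step. The interior-max argument is sound, but the boundary case cannot be disposed of by thickening as you suggest. If $y_0$ realizes the maximum of $d_\em$ over $\overline{\hull(\em)^\varepsilon}$ and lies on the boundary $\partial(\hull(\em)^\varepsilon)$, the separating-hyperplane argument shows only that $d_\em$ strictly increases in the \emph{outward} normal direction (towards $y_0 - z$ with $z = \pr_{\hull(\em)}(y_0)$); this is entirely compatible with $y_0$ being a boundary maximum and being \emph{regular}, since moving outward leaves the admissible set. Nothing in the sketch rules out that $y_\varepsilon$ sits on $\partial(\hull(\em)^\varepsilon)$ for every $\varepsilon > 0$, in which case passing to the limit produces no critical point. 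A clean patch is to argue topologically instead: for $t > \diam(\em)$ the offset $\em^t$ is star-shaped about any $p \in \em$ (for $y \in B(q,t)$ with $q \in \em$, every point of the segment $[p,y]$ is within $t$ of $q$) and hence contractible, while $\em$ itself, being a closed $d$-manifold with $d < D$, has $H_d(\em;\zzz/2\zzz) \neq 0$ and is not contractible; by the isotopy lemma for distance functions (Grove--Shiohama), $d_\em$ must therefore have a critical value in $(0,\diam(\em)]$, giving $\rwfs \leq \diam(\em)$. With this replacement the argument closes.
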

\begin{proof}
 Due to the relationship between curvature and volume, we have, by Point (3) on \cite[p.~2]{Alm} that $R_\ell \leq (\vol M / \omega_d)^{1/d} \leq (f_{\min} \omega_d)^{-1/d}$, where $\omega_d$ is the volume of the $d$-dimensional sphere of radius $1$.

Furthermore, Aamari and Levrard have shown \cite[Lemma 2.2]{AL18} that for some constant $C$ depending only on dimension, $\diam (\em) \leq C(d) f_{\min}^{-1} \rmin^{1-d}$. Since $\rwfs \leq \frac12 \diam(\em)$ we have $\rwfs \leq \frac12 C(d) f_{\min}^{-1} \rmin^{1-d}$. Setting $$R_{\max} := \max \lbrace (f_{\min} \omega_d)^{-1/d},  \frac12 C(d) f_{\min}^{-1} \rmin^{1-d}\rbrace,$$ we have the result.
\end{proof}

In \cite{AL} the authors construct an estimator $\hatem$ out of polynomial patches, from a sample $(X_1,\ldots, X_n)$ of random variables with common distribution $P \in \mathscr{P}^k$, supported on a submanifold $\em \in \ckrl$. That estimator has the following convergence property. (Note that the $T_i^*$ referred to below are $i$-linear maps from $T_p \em$ to $\rrr^D$ which are the $i$th order terms in the Taylor expansion of the submanifold discussed in Section \ref{geomframework}.)

\begin{theorem}[Theorem 6 in \cite{AL}]\label{ALestimator}
	Let $k \geq 3$. Set 
	$$\theta =\left(C_{d,k}\frac{\log(n)f^2_{\mathrm{max}}}{(n-1)f^3_{\mathrm{min}}}\right)^{1/d}$$ for $C_{d,k}$ large enough. If $n$ is large enough so that $0 < \theta \leq \tfrac{1}{8}\min\left\lbrace R_{\mathrm{min}}, L^{-1}_{\perp} \right\rbrace$ and $\theta^{-1} \geq C_{d,k,R_{\mathrm{min}},\mathbf{L}} \geq \sup_{2 \leq i \leq k} \left|T_i^*\right|_{\mathrm{op}} $, then with probability at
	least $1 - 2 (\frac{1}{n})^{\frac{k}{d}}$, we have
	$$H (\hatem, \em) \leq C^\star\, \theta^k$$
for some $C^\star >0$. In particular, for $n$ large enough,
	$$\sup_{P \in \mathscr{P}^k} \mathbf{E}_{P^{\otimes n}} \big[H (\hatem, \em)\big] \leq C \left( \frac{\log(n)}{n-1} \right) ^{k/d},$$
	where $C = C_{d,k,R_{\mathrm{min}},\mathbf{L},f_{\mathrm{min}},f_{\mathrm{max}}}$.
\end{theorem}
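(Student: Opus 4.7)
The plan is to prove the Aamari--Levrard estimator bound by combining (i) a covering/bias-variance argument for the sample on $\em$, (ii) a local polynomial regression step on each patch, and (iii) a gluing argument. The essence is that once the sample is dense enough at scale $\theta$, local patches of radius $\theta$ contain enough points to fit a degree $k-1$ Taylor expansion, and the approximation error of such a polynomial fit on a $\curlyc^k$ manifold is of order $\theta^k$.

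First I would establish the covering step. Using the density lower bound $f_{\min}$ and the volume estimate $\mu_{\em}(B(p,r) \cap \em) \geq c_d r^d$ valid for $r \leq \tfrac12 R_{\min}$ (the classical ball-volume bound on manifolds of reach bounded below), a Chernoff argument on a deterministic $\theta/2$-net of $\em$ shows that with probability at least $1 - 2 n^{-k/d}$, every point of $\em$ lies within $\theta = (C_{d,k}\log(n)f_{\max}^2/((n-1)f_{\min}^3))^{1/d}$ of some sample point $X_i$, for $C_{d,k}$ chosen large enough. This gives one half of the Hausdorff bound, namely $\sup_{p \in \em} d(p, \hatem) \leq \theta \ll \theta^k$ for small $\theta$, but will also be used as the input for the reverse bound.

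Next, on the above event, I would at each $X_i$ perform local polynomial estimation over neighbors $X_j \in B(X_i, c\theta)$: first estimate the tangent space $\widehat T_{X_i}\em$ via local PCA on the centered data $\{X_j - X_i\}$, then fit a degree $k-1$ polynomial in the variable $v \in \widehat T_{X_i}\em$ to the residuals in the normal direction via least squares. The Aamari--Levrard framework (Definition \ref{AL-framework} and Lemma \ref{lem: Wass et al}) guarantees that $\em$ is the image of $\Phi_p(v) = p + v + \tfrac12 T_2 v^{\otimes 2} + \cdots + \tfrac{1}{(k-1)!} T_{k-1} v^{\otimes (k-1)} + \mathcal R_k(v)$ with $\|\mathcal R_k(v)\| \leq C\|v\|^k$, so the best polynomial fit of degree $k-1$ on a ball of radius $\theta$ matches $\Phi_{X_i}$ up to a bias of order $\theta^k$. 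The variance of the least-squares fit contributes $\theta^k$ as well, provided the number of neighbors is $\gtrsim \theta^{-d} \cdot (n-1)/\log n \gtrsim \log n$, which holds by the covering step. The patch $\widehat{\em}_i$ is then taken as the image of the fitted polynomial over a suitable domain, and $\hatem = \bigcup_i \widehat{\em}_i$.

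Finally one shows both inclusions of the Hausdorff bound. For $\sup_{q \in \hatem} d(q, \em) \leq C^\star \theta^k$, one uses that each fitted polynomial patch lies within $\theta^k$ of the true patch $\Phi_{X_i}(B(0,c\theta))$. For $\sup_{p \in \em} d(p, \hatem) \leq C^\star \theta^k$, one combines the covering step with a projection argument: any $p \in \em$ is within $\theta$ of some $X_i$, and the polynomial patch at $X_i$ covers $\Phi_{X_i}(B(0,c\theta))$ up to error $\theta^k$, which in turn contains $p$ up to ambient error $\theta^k$. The expected Hausdorff bound then follows from integrating over the low-probability complement, where the trivial deterministic bound $H(\hatem,\em) \leq \diam(\em) \leq R_{\max}$ (cf.\ Lemma \ref{lem: rmax}) together with the probability $2 n^{-k/d}$ gives a negligible contribution $\lesssim n^{-k/d}$.

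The main obstacle is controlling the tangent space estimation error uniformly in $i$: an error of $\eta$ in the subspace $\widehat T_{X_i}\em$ vs.\ $T_{X_i}\em$ propagates into the polynomial fit as an $O(\eta)$ shift of the linear term and must be shown to satisfy $\eta \lesssim \theta^{k-1}$ so that it does not spoil the $\theta^k$ bound. Standard local-PCA perturbation arguments (Davis--Kahan) give such a bound provided both the curvature contribution, of size $\theta$, and the sampling noise in the second-moment matrix, of size $\sqrt{\log(n)/(n \theta^d)}$, remain comparable to $\theta^{k-1}$ after rescaling — which is exactly what the choice $\theta = (C_{d,k}\log n/(n-1))^{1/d}$ ensures.
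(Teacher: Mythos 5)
Note first that the paper states this result as a direct citation of Theorem~6 of Aamari and Levrard \cite{AL}; it does not reproduce the proof, so there is no internal ``paper proof'' to compare against. Your sketch must therefore be measured against the actual construction in \cite{AL}.

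Your high-level outline --- a Chernoff covering argument for $\theta$-density of the sample, local polynomial patching at scale $\theta$, a two-sided Hausdorff bound, and integration over the low-probability complement (using the bound $H(\widehat{\em},\em)\lesssim\diam(\em)$) to pass to the expectation bound --- does follow the spirit of \cite{AL}. The genuine gap is in the tangent-space estimation step, which you correctly single out as the crux but then dismiss with a false claim. You assert that local PCA together with Davis--Kahan yields tangent space error $\eta\lesssim\theta^{k-1}$. This is wrong: local PCA on $B(X_i,c\theta)\cap\em$ has a curvature-induced \emph{bias} of order $\theta$ (or at best $\theta^{2}$ under favorable symmetry), arising from tangential--normal cross terms of order $\theta^{3}$ against a tangential eigenvalue scale $\theta^{2}$; this bias does not decrease with more data. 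For $k\geq 4$, $\theta^{k-1}\ll\theta^{2}$, so the bound cannot hold. The numerics in your last sentence are also off: with $\theta^{d}\asymp\log n/(n-1)$ one has $n\theta^{d}\asymp\log n$ and hence $\sqrt{\log n/(n\theta^{d})}\asymp 1$, which certainly is not $\lesssim\theta^{k-1}$, so the claim that the choice of $\theta$ ``exactly ensures'' the bound is simply incorrect.

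This is precisely why \cite{AL} does not use a two-step PCA-then-regression scheme. Their local polynomial estimator minimizes \emph{jointly} over a candidate tangent direction and polynomial coefficients $(T_2,\ldots,T_{k-1})$, and it is this joint minimization that delivers tangent space accuracy $O(\theta^{k-1})$ (their tangent-space estimator is a by-product of the same functional; cf.\ Theorems~2 and~4 of \cite{AL}). Without this device, the two-step scheme you describe saturates at roughly $O(\theta^{3})$ Hausdorff accuracy, insufficient for $k\geq 4$. Alternatively, if you intend the degree-$(k-1)$ fit to include a free linear term over the PCA-estimated plane, then the tangent-space bias can be absorbed and your obstacle dissolves --- but then your final paragraph becomes a non sequitur, and you would instead need to control the domain mismatch between the fitted patch $\widehat\Phi_i(B_{\widehat T_i}(0,c\theta))$ and the true patch, which you do not address. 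As written, the proposal does not close the argument.
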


Note that the estimator is dependent on the value of $\theta \approx n^{-1/d}$ to within logarithmic terms, which serves as a bandwidth. The convergence rate of this estimator is very close to the currently established lower bound for estimating the reach $R$, which is $n^{-k/d}$; see Theorem \ref{prp:low} in Section \ref{sec: lower bounds} below.

\subsection{Estimating the local reach}

\begin{definition}
	We define an estimator for $R_{\ell}(\em)$, the local reach of a submanifold $\em$, by 
	$$\widehat R_{\ell} = \min\left\{\Big(2\frac{h_{\hatem}(\Delta)}{\Delta^2}\Big)^{-1}, R_{\max}\right\}$$ 
	where $\hatem$ is the Aamari--Levrard estimator of $\bf M$ as discussed at the beginning of Section \ref{sec: upperbounds} above, $\epsilon = C^\star \theta^k$ as in Theorem \ref{ALestimator}, $\Delta = \epsilon^{1/3}$ if $k=3$, or $\Delta = \epsilon^{1/4}$ if $k \geq 4$, and $R_{\max}$ is as in Lemma \ref{lem: rmax}. 
\end{definition}

\begin{theorem} \label{upper local condition}
	Let $k \geq 3$, let $\theta$ be as in Theorem \ref{ALestimator} and set $\epsilon = C^\star \theta^k$. 
	Then with probability at
	least $1 - 2 (\frac{1}{n})^{\frac{k}{d}}$, we have
	$$
	\big| \widehat R_{\ell} -R_\ell\big| \leq C_{d,k,R_{\mathrm{min}},\mathbf{L},f_{\mathrm{min}}} \epsilon^{1/3},
	$$	
	and, where $k \geq 4$, the exponent is $\epsilon^{1/2}$.
	Moreover,
	for $n$ large enough, we have
	$$
	\sup_{P \in \mathscr{P}^k} \mathbf{E}_{P^{\otimes n}} \big[\big| \widehat R_{\ell} -R_\ell \big|\big] \leq C \left( \frac{\log(n)}{n-1} \right) ^{\frac{k}{3d}},
	$$
	or, for $k \geq 4$, $C \left( \frac{\log(n)}{n-1} \right) ^{\frac{k}{2d}}$,
	where $C = C_{d,k,R_{\mathrm{min}},\mathbf{L},f_{\mathrm{min}},f_{\mathrm{max}}}$.
\end{theorem}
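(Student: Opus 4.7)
The plan is to compose two ingredients already in the paper: the Aamari--Levrard high-probability control of $H(\hatem,\em)$ from Theorem \ref{ALestimator}, and the deterministic error estimate of Proposition \ref{localloss} applied to the approximation of $h''_{\em}(0)=1/R_\ell$ by $\tilde\kappa := 2h_{\hatem}(\Delta)/\Delta^2$. The only remaining work is to transfer an additive error on $\kappa := 1/R_\ell$ into an additive error on $R_\ell = 1/\kappa$, using the cap at $R_{\max}$ to control the non-Lipschitz behaviour of the inversion map near $0$.

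With $\epsilon = C^\star \theta^k$ and $\Delta = \epsilon^{1/3}$ (if $k=3$) or $\Delta = \epsilon^{1/4}$ (if $k\geq 4$), Theorem \ref{ALestimator} guarantees that the event $\mathcal{E}_n := \{H(\hatem,\em) \leq \epsilon\}$ has probability at least $1-2n^{-k/d}$. For $n$ large enough, $\epsilon < \Delta < 1$ and the smallness hypotheses of Propositions \ref{taylorpoly} and \ref{localloss} are all satisfied. On $\mathcal{E}_n$, applying Proposition \ref{localloss} with $\tildem = \hatem$ yields
$$|\kappa - \tilde\kappa| \leq (A+B)\,\epsilon^{\alpha_k},$$
with $\alpha_k = 1/3$ if $k=3$ and $\alpha_k = 1/2$ if $k\geq 4$, where $A,B$ depend only on $\rmin,\bL$.

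The next step is to bound $|\widehat R_\ell - R_\ell|$ in terms of $\delta := |\kappa - \tilde\kappa|$. By Lemma \ref{lem: rmax} together with $R_\ell \geq \rmin$, we have $\kappa \in [1/R_{\max}, 1/\rmin]$. If $\tilde\kappa \geq 1/R_{\max}$, then $\widehat R_\ell = 1/\tilde\kappa \leq R_{\max}$ and
$$|\widehat R_\ell - R_\ell| \;=\; \frac{\delta}{\kappa\tilde\kappa} \;\leq\; R_{\max}^{2}\,\delta.$$
If instead $\tilde\kappa < 1/R_{\max}$, then $\widehat R_\ell = R_{\max}$, and since $\kappa - 1/R_{\max} \leq \kappa - \tilde\kappa = \delta$, the identity $R_{\max}-R_\ell = (R_{\max}\kappa-1)/\kappa$ together with $1/\kappa \leq R_{\max}$ gives $|\widehat R_\ell - R_\ell| \leq R_{\max}^{2}\,\delta$ in this case as well. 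Absorbing $R_{\max}^{2}(A+B)$ into a single constant produces the stated in-probability bound.

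For the expectation bound, split over $\mathcal{E}_n$ and its complement. On $\mathcal{E}_n$ the preceding pointwise inequality applies. On $\mathcal{E}_n^c$, of probability at most $2n^{-k/d}$, use the crude bound $|\widehat R_\ell - R_\ell| \leq R_{\max}$, which is immediate from $\widehat R_\ell, R_\ell \in [0,R_{\max}]$; this contributes at most $2R_{\max}\, n^{-k/d}$, which is of smaller order than the leading term. Substituting $\theta^{k} \asymp (\log n/(n-1))^{k/d}$ yields the announced rates $(\log n/(n-1))^{k/(3d)}$ for $k=3$ and $(\log n/(n-1))^{k/(2d)}$ for $k \geq 4$. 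The main obstacle is essentially bookkeeping around the non-Lipschitz inversion at $0$; the cap at $R_{\max}$, ultimately supplied by the density lower bound $f_{\min}$ through Lemma \ref{lem: rmax}, is exactly what makes this conversion uniform over $\mathscr{P}^k$. All substantive geometric content is already encoded in Propositions \ref{taylorpoly}, \ref{p:newbound} and \ref{localloss}.
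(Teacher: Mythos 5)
Your proposal is correct and follows essentially the same route as the paper's proof: control $H(\hatem,\em)$ via Theorem \ref{ALestimator}, convert to a bound on $|1/R_\ell - 2h_{\hatem}(\Delta)/\Delta^2|$ via Proposition \ref{localloss}, transfer to $|\widehat R_\ell - R_\ell|$ using the $R_{\max}$ cap (the paper expresses this via the single inequality $|1/\widehat R_\ell - 1/R_\ell| \leq |2h_{\hatem}(\Delta)/\Delta^2 - 1/R_\ell|$ and then $\widehat R_\ell R_\ell \leq R_{\max}^2$, where you argue the same $R_{\max}^2\delta$ bound by a two-case analysis), and finally integrate using the trivial bound on the bad event. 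The only cosmetic difference is that bookkeeping; the geometric and probabilistic inputs are identical.
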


A glance at the proof shows that we actually control $\big|{\widehat R_{\ell}}^{-1}-{R_\ell }^{-1}\big|$ rather than $\big|\widehat R_{\ell} - R_\ell\big|$. This has no impact since $R_\ell  \leq R_{\max}$ is uniformly bounded and we do not seek fine control on $C$. Changing the parametrization $R  \mapsto 1/R$ in our statistical problem and estimating $1/R$ instead of $R$ would enable us to remove the projection onto $[0, R_{\max}]$ that we use to define $\widehat R_{\ell}$.

\begin{proof}
By construction, $\widehat R_{\ell} \leq R_{\max}$, and it is also clear that 
$$\Big|\frac{1}{\widehat R_{\ell}}-\frac{1}{R_\ell }\Big| \leq \Big|2\frac{h_{\hatem}(\Delta)}{\Delta^2}-\frac{1}{R_\ell }\Big|.$$
We derive
\begin{align*}
\big|\widehat R_{\ell} - R_\ell\big|  =\widehat R_{\ell}R_\ell \Big|\frac{1}{\widehat R_{\ell}}-\frac{1}{R_\ell }\Big| 
 \leq R_{\max}^2 \Big|2\frac{h_{\hatem}(\Delta)}{\Delta^2}-\frac{1}{R_\ell }\Big|.
\end{align*}
The first statement of Theorem \ref{upper local condition} is then a straightforward consequence of Proposition \ref{localloss} together with Theorem \ref{ALestimator}. Next, we have
\begin{align*}
&  \mathbf{E}_{P^{\otimes n}} \big[\big|\widehat R_{\ell} - R_\ell\big|\big] \\
 \leq &\; C_{d,k,R_{\mathrm{min}},f_{\mathrm{min}}, \mathbf{L}} \epsilon^{1/3} + 2R_{\max} P^{\otimes n}\big(\big|\widehat R_{\ell} - R_\ell\big| > C_{d,k,R_{\mathrm{min}},f_{\mathrm{min}}, \mathbf{L}} \epsilon^{1/3}\big) \\
  \leq & \; C_{d,k,R_{\mathrm{min}},f_{\mathrm{min}}, \mathbf{L}} \epsilon^{1/3} +4 R_{\max} n^{-k/d} 
\end{align*}
thanks to the first part of Theorem \ref{upper local condition}. This term is of order $(\log n/n\big)^{k/3d}$. For $k \geq 4$, we have the improvement to the exponent $\epsilon^{1/2}$ and the order becomes $(\log n/n\big)^{k/2d}$, which establishes the second part of the theorem for all values of $k \geq 3$ and completes the proof.
\end{proof}

 For $k=3,4$, then, the constructed estimator is optimal up to a $\log(n)$ factor as follows from Theorem \ref{prp:low} below. 

\subsection{Estimating the global reach}

By the earlier discussion, it is not possible to give a convergence guarantee when estimating the weak feature size, i.e. the first positive critical value of $d_{\em}$. However, in the case where $R = \rwfs$, that is, when $\rwfs < R_{\ell}$, this is possible. Accordingly, we now define an estimator for $\rwfs$ and hence an estimator for the reach itself.

\begin{definition} \label{def:rwfs}
		We define an estimator for $\rwfs$, the weak feature size of a submanifold $\em$, by
	$$\widehat R_{\mathrm{wfs}} = \min\Big\{\inf \big\lbrace t \in \rrr : \tfrac{22}{4} \epsilon < t, \enskip h_{\hatem}(t) \geq t - 3 \epsilon\rbrace,  R_{\mathrm{max}}\Big\},$$
where $\hatem$ is the Aamari--Levrard estimator of $\bf M$ as discussed at the beginning of Section \ref{sec: upperbounds} above, $\epsilon = C^\star \theta^k$ as in Theorem \ref{ALestimator} and $R_{\max}$ is as in Lemma \ref{lem: rmax}.
\end{definition}

Our estimator for the reach is then the lesser of the two individual estimators.

\begin{definition} \label{def: final est}
	Let $C^\star, \theta$ be as in Theorem \ref{ALestimator} and set $\epsilon = C^\star \theta^k$.
	We define an estimator for $R(\em)$, the reach of a submanifold $\em$, by 
	$$\widehat{R} = \min \Big\{\widehat R_{\mathrm{wfs}}, \widehat{R}_{\ell} \Big\}.$$
\end{definition}

Note that we could just as well use $\widehat R_{\ell}$ in place of $R_{\mathrm{max}}$ to cap the value of $\widehat R_{\mathrm{wfs}}$, since we do not analyse the error in the case $\widehat R_{\ell} < \widehat R_{\mathrm{wfs}}$. However, Definition \ref{def:rwfs} appears more natural as a stand-alone estimator of $\rwfs$.

\begin{theorem} \label{th: upper glob reach}
	Let $k \geq 3$, let $C^\star, \theta$ be as in Theorem \ref{ALestimator}, and set $\epsilon = C^\star\, \theta^k$, with $\epsilon$ such that
	$\tfrac{22}{4}\epsilon < \min(R_{\min}, 1)$, which is always satisfied for large enough $n \geq 1$. Then with probability at
	least $1 - 4n^{-k/d}$, we have
	$$
	\big| \widehat{R} - R\big| \leq C_{d,k,R_{\mathrm{min}},\mathbf{L}} \epsilon^{1/3},
	$$
	and, where $k \geq 4$, the exponent is $\epsilon^{1/2}$.
	In particular, for $n$ large enough,
	$$
	\sup_{P \in \mathscr{P}^k} \mathbf{E}_{P^{\otimes n}} \big[\big| \widehat{R} - R\big|\big] \leq C \left( \frac{\log(n)}{n-1} \right) ^{\frac{k}{3d}},
	$$
	or, for $k \geq 4$, $C \left( \frac{\log(n)}{n-1} \right) ^{\frac{k}{2d}}$,
	where $C = C_{d,k,\tau_{\mathrm{min}},\mathbf{L},f_{\mathrm{min}},f_{\mathrm{max}}}$.
\end{theorem}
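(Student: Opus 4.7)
The plan is to decompose via $\widehat R = \min\{\widehat R_\ell, \widehat R_{\mathrm{wfs}}\}$ and $R = \min\{R_\ell, \rwfs\}$, and to control each piece on the event $\Omega = \{H(\hatem, \em) \leq \epsilon\}$, which by Theorem~\ref{ALestimator} satisfies $\mathbf{P}(\Omega) \geq 1 - 2n^{-k/d}$. On $\Omega$, Theorem~\ref{upper local condition} already furnishes $|\widehat R_\ell - R_\ell| \leq C_1 \epsilon^{1/3}$ (resp.\ $C_1\epsilon^{1/2}$ for $k\geq 4$). The remaining task is to produce a matching two-sided control on $\widehat R_{\mathrm{wfs}}$ that is valid \emph{regardless} of whether the gap condition $R_\ell - \rwfs > \tfrac{9}{4}\epsilon$ required by Proposition~\ref{globalloss} holds, since in the case $R=R_\ell$ there may be no gap at all.

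For the upper bound, I would rerun the first part of the proof of Proposition~\ref{globalloss}, which never used the gap: stability gives $h_{\hatem}(\rwfs + \epsilon) \geq h_{\em}(\rwfs) - 2\epsilon = (\rwfs + \epsilon) - 3\epsilon$, so $t = \rwfs + \epsilon$ lies in the defining set of $\widehat R_{\mathrm{wfs}}$, yielding $\widehat R_{\mathrm{wfs}} \leq \rwfs + \epsilon$. For the lower bound, I claim $\widehat R_{\mathrm{wfs}} \geq R - \tfrac{5}{4}\epsilon$ unconditionally. For $t \in [\tfrac{22}{4}\epsilon, R-\epsilon)$ we have $t+\epsilon < R$, so Proposition~\ref{p:newbound} and stability imply
\[
h_{\hatem}(t) \leq h_{\em}(t+\epsilon) + 2\epsilon \leq R_\ell - \sqrt{R_\ell^2 - (t+\epsilon)^2} + 2\epsilon.
\]
If $t$ belongs to the defining set, then $h_{\hatem}(t) \geq t - 3\epsilon$, so with $s = t+\epsilon$ we get $s - 6\epsilon \leq R_\ell - \sqrt{R_\ell^2 - s^2}$. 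Applied with radius $R_\ell$, Lemma~\ref{l:intersection} forbids $s \in (x_0, x_1)$; since $s \geq \tfrac{26}{4}\epsilon > \tfrac{25}{4}\epsilon \geq x_0$, we must have $s \geq x_1 \geq R_\ell - \tfrac{\epsilon}{4}$, hence $t \geq R_\ell - \tfrac{5}{4}\epsilon \geq R - \tfrac{5}{4}\epsilon$ using $R_\ell \geq R$. Values $t \in [R-\epsilon, R)$ trivially satisfy the same bound, completing the claim.

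Combining the three estimates on $\Omega$,
\begin{align*}
\widehat R &\leq \min\{R_\ell + C_1\epsilon^{1/3},\; \rwfs + \epsilon\} \leq R + C_1\epsilon^{1/3},\\
\widehat R &\geq \min\{R_\ell - C_1\epsilon^{1/3},\; R - \tfrac{5}{4}\epsilon\} \geq R - C_2\epsilon^{1/3},
\end{align*}
where the first inequality uses $\min\{a_1+u, a_2+v\} \leq \min\{a_1,a_2\} + \max\{u,v\}$ together with $\min\{R_\ell, \rwfs\} = R$, and the second uses $R_\ell \geq R$ and $\epsilon \leq \epsilon^{1/3}$ for small $\epsilon$. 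Thus $|\widehat R - R| \leq C\epsilon^{1/3}$ on $\Omega$ (resp.\ $C\epsilon^{1/2}$ for $k\geq 4$); the factor $4n^{-k/d}$ in the statement is absorbed by a union bound allowing separate good events for $\widehat R_\ell$ and $\widehat R_{\mathrm{wfs}}$. The expectation bound follows by standard truncation using $\widehat R, R \in [0, R_{\mathrm{max}}]$ (Lemma~\ref{lem: rmax}):
\[
\mathbf{E}_{P^{\otimes n}}\big[|\widehat R - R|\big] \leq C\epsilon^{1/3} + 2R_{\mathrm{max}}\,\mathbf{P}(\Omega^c) \leq C\epsilon^{1/3} + C' n^{-k/d},
\]
and substituting $\epsilon = C^\star\theta^k$ with $\theta \asymp (\log n/(n-1))^{1/d}$ recovers the announced rate.

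The main obstacle is the unconditional lower bound on $\widehat R_{\mathrm{wfs}}$: Proposition~\ref{globalloss} becomes silent precisely when the reach is realised locally, so there is no gap to exploit. The geometric point that saves the proof is that the quarter-circle bound of Proposition~\ref{p:newbound} has radius $R_\ell$ rather than $R$, which lets the chord--arc analysis of Lemma~\ref{l:intersection} push the first Hausdorff-perturbed crossing of the diagonal up to $R_\ell - O(\epsilon) \geq R - O(\epsilon)$ even when the two cases of the dichotomy of Theorem~\ref{reachdichotomy} hold simultaneously.
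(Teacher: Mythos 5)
Your proof is correct, and it takes a genuinely different route from the paper's. The paper splits into the two events $\{\widehat R_\ell < \widehat R_{\mathrm{wfs}}\}$ and $\{\widehat R_\ell \geq \widehat R_{\mathrm{wfs}}\}$, and in the second case further decomposes into three terms $T_1, T_2, T_3$ according to whether $\rwfs + \tfrac94\epsilon < R_\ell$, $\rwfs \leq R_\ell < \rwfs + \tfrac94\epsilon$, or $R_\ell < \rwfs$, handling each with separate (somewhat laborious) chains of inequalities. You instead prove a clean \emph{unconditional} two-sided control of the auxiliary estimator $\widehat R_{\mathrm{wfs}}$ on the good Hausdorff event: $\widehat R_{\mathrm{wfs}} \leq \rwfs + \epsilon$ from stability at $t = \rwfs + \epsilon$, and $\widehat R_{\mathrm{wfs}} \geq R - \tfrac54\epsilon$ by feeding Proposition~\ref{p:newbound} and stability into Lemma~\ref{l:intersection} with radius $R_\ell$ (which dominates both $R$ and $\rmin$, so the lemma's $\epsilon_0$-threshold is no more restrictive than for $\rmin$). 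Combined with Theorem~\ref{upper local condition}'s control of $\widehat R_\ell$ and the elementary inequality $\min\{a_1+u, a_2+v\} \leq \min\{a_1,a_2\} + \max\{u,v\}$, this sandwiches $\widehat R$ around $R$ directly, eliminating the paper's case analysis entirely. The key insight enabling this --- that the quarter-circle bound has radius $R_\ell$ rather than $R$, so that the perturbed diagonal crossing is pushed up to $R_\ell - O(\epsilon) \geq R - O(\epsilon)$ regardless of which branch of the dichotomy realises the reach --- is the same one the paper exploits, but you deploy it more economically. Two small remarks: the events in your union bound are in fact nested ($\Omega_2 \subseteq \Omega_1$ with appropriate constants), so $2n^{-k/d}$ would already suffice, though $4n^{-k/d}$ is harmless; and, like the paper, you should really track that the theorem's hypothesis on $\epsilon$ suffices for Lemma~\ref{l:intersection} with $R = \rmin$ (hence for all $R_\ell \geq \rmin$), which the paper's Proposition~\ref{globalloss} makes explicit but Theorem~\ref{th: upper glob reach} states only loosely.
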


\begin{proof} We will prove the result in three steps. In Step 1 we provide a bound in the case $\widehat R_\ell < \widehat R_{\mathrm{wfs}}$ which holds with high probability. Then in Step 2 we provide a bound in the complementary case $\widehat R_\ell \geq \widehat R_{\mathrm{wfs}}$. Finally, in Step 3, we combine the two bounds, proving the first statement, and use it to obtain the bound on the expected loss. {In the following, we use the letters $C$ and $C'$ to denote positive numbers that do not depend on $n$ and that may vary at each occurence.}

\noindent {\it Step 1)}. We have
\begin{align*}
\big|\widehat R-R\big|{\bf 1}_{\{\widehat R_\ell < \widehat R_{\mathrm{wfs}}\}} & = \big|\widehat R_\ell-\min(R_\ell, R_{\mathrm{wfs}})\big|{\bf 1}_{\{\widehat R_\ell < \widehat R_{\mathrm{wfs}}\}} \\
& \leq \big|\widehat R_\ell-R_\ell\big| +\big|\widehat R_\ell-R_{\mathrm{wfs}}\big|{\bf 1}_{(R_{\mathrm{wfs}} < R_\ell )}{\bf 1}_{\{\widehat R_\ell < \widehat R_{\mathrm{wfs}}\}} \\
& \leq 2\big|\widehat R_\ell-R_\ell\big| + \big|R_\ell-R_{\mathrm{wfs}}\big|{\bf 1}_{(R_{\mathrm{wfs}} < R_\ell )}{\bf 1}_{\{\widehat R_\ell < \widehat R_{\mathrm{wfs}}\}}
\end{align*}
by triangle inequality. For $C_1,C_2 >0$, introduce the events 
$$\Omega_1 = \big\{\big|\widehat R_\ell -R_\ell\big| \leq C_1\epsilon^{1/3}\big\}\;\;\text{and}\;\;\Omega_2 = \big\{H(\hatem,\em) \leq \epsilon\big\}.$$
On $\{\widehat R_\ell < \widehat R_{\mathrm{wfs}}\}$, we have
$$\forall t \in \big[\tfrac{22}{4}\epsilon, \widehat R_\ell\big] : h_{\hatem}(t) < t-3\epsilon,$$
therefore, on  $\{\widehat R_\ell < \widehat R_{\mathrm{wfs}}\} \cap \Omega_1$, we infer that
$$
\text{for all } t \in \big[\tfrac{22}{4}\epsilon, R_\ell-C_1\epsilon^{1/3}\big] : h_{\hatem}(t) < t-3\epsilon.
$$
By item 3 of the properties of the convexity defect function given after Definition \ref{def: convexity defect function}, on $
\Omega_2$, we have
$$
h_{\hatem}(t) \geq h_{\em}(t-\epsilon)-2\epsilon.
$$
Putting the last two estimates together, we obtain on $\{\widehat R_\ell < \widehat R_{\mathrm{wfs}}\} \cap \Omega_1 \cap \Omega_2$ the bound
$$\forall t \in \big[\tfrac{22}{4}\epsilon, R_\ell-C_1\epsilon^{1/3}\big] : h_{\em}(t-\epsilon) < t - 3\epsilon + 2\epsilon$$
or equivalently
$$\forall t \in \big[(\tfrac{22}{4}-1)\epsilon, R_\ell-C_1\epsilon^{1/3}-\epsilon\big] : h_{\em}(t) < t.$$
Therefore $h_{\em}(t)<t$ for $t \leq R_\ell-C_1\epsilon^{1/3}-\epsilon$ and this implies in turn
$$R_{\mathrm{wfs}} \geq R_\ell-C_1\epsilon^{1/3}-\epsilon.$$
We have thus proved
$$
\big|R_\ell-R_{\mathrm{wfs}}\big|{\bf 1}_{(R_{\mathrm{wfs}} < R_\ell )}{\bf 1}_{\{\widehat R_\ell < \widehat R_{\mathrm{wfs}}\}}{\bf 1}_{\Omega_1\cap\Omega_2} \leq (C_1\epsilon^{1/3}+\epsilon) \leq C\epsilon^{1/3}.
$$
Finally
$$
\big|\widehat R-R\big|{\bf 1}_{\{\widehat R_\ell < \widehat R_{\mathrm{wfs}}\}}{\bf 1}_{\Omega_1\cap\Omega_2}  \leq C\epsilon^{1/3}.
$$
\noindent {\it Step 2)}. We have
$$\big|\widehat R-R\big|{\bf 1}_{\{\widehat R_\ell \geq \widehat R_{\mathrm{wfs}}\}}  \leq T_1+T_2+T_3,$$
with
\begin{align*}
T_1 & = \big|\widehat R_{\mathrm{wfs}}-R_{\mathrm{wfs}}\big|{\bf 1}_{\big(R_{\mathrm{wfs}}+\tfrac{9}{4}\epsilon <  R_\ell \big)}{\bf 1}_{\{\widehat R_\ell \geq \widehat R_{\mathrm{wfs}}\}}, \\
T_2 & = \big|\widehat R_{\mathrm{wfs}}-R_{\mathrm{wfs}}\big|{\bf 1}_{\big(R_{\mathrm{wfs}} \leq R_\ell < R_{\mathrm{wfs}}+\tfrac{9}{4}\epsilon\big)}{\bf 1}_{\{\widehat R_\ell \geq \widehat R_{\mathrm{wfs}}\}}, \\
T_3 & = \big|\widehat R_{\mathrm{wfs}}- R_\ell\big|{\bf 1}_{(R_\ell < R_{\mathrm{wfs}})}{\bf 1}_{\{\widehat R_\ell \geq \widehat R_{\mathrm{wfs}}\}}.
 \end{align*}
By Proposition \ref{globalloss}, we have $T_1 \leq \epsilon$ on $\Omega_2$.
We turn to the term $T_2$.  We have
$$h_{\hatem}(\widehat R_{\mathrm{wfs}}) \geq \widehat R_{\mathrm{wfs}} - 3\epsilon$$
on $\{\widehat R_\ell \geq \widehat R_{\mathrm{wfs}}\}$ by construction. Thanks to item 3 of the properties of the convexity defect function given after Definition \ref{def: convexity defect function}, we also have
$$h_{\hatem}(\widehat R_{\mathrm{wfs}}) \leq h_{\em}(\widehat R_{\mathrm{wfs}} + \epsilon)+2\epsilon\;\;\text{on}\;\;\Omega_2$$
therefore
\begin{equation*} \label{eq: first bracketing T2}
\widehat R_{\mathrm{wfs}}-5\epsilon \leq h_{\em}(\widehat R_{\mathrm{wfs}} + \epsilon)
\end{equation*}
holds true on $\{\widehat R_\ell \geq \widehat R_{\mathrm{wfs}}\} \cap \Omega_2$.
 Introduce now the event
$$\Omega_3 = \big\{\widehat R_{\mathrm{wfs}}+\epsilon < R_{\mathrm{wfs}}\big\}.$$
By Proposition \ref{p:newbound}, it follows that
$$
\widehat R_{\mathrm{wfs}} -  5\epsilon \leq  R_\ell -\sqrt{R_\ell^2-(\widehat R_{\mathrm{wfs}}+\epsilon)^2}
$$
on $\{\widehat R_\ell \geq \widehat R_{\mathrm{wfs}}\} \cap \Omega_2 \cap \Omega_3$. Solving 
%\eqref{eq: quadratic} 
this inequality when $R_\ell > \widehat R_{\mathrm{wfs}}+\epsilon$ yields
$\widehat R_{\mathrm{wfs}} \geq R_\ell -C\epsilon$
for some $C>0$ that depends on $R_\ell$ only. Otherwise, $R_\ell -\epsilon \leq \widehat R_{\mathrm{wfs}}$ directly. Replacing $C$ by $\max \lbrace 1,C \rbrace$, we infer 
$$R_\ell {-C\epsilon}\leq \widehat R_{\mathrm{wfs}} \leq \widehat R_\ell \leq R_\ell+ C_1\epsilon^{1/3}$$
on $\{\widehat R_\ell \geq \widehat R_{\mathrm{wfs}}\} \cap \Omega_1 \cap \Omega_2 \cap \Omega_3$ hence
%\begin{equation} \label{}
$|\widehat R_{\mathrm{wfs}}-R_\ell | \leq {C}\epsilon^{1/3}$ on that event. Combining this estimate with the condition $|R_\ell - R_{\mathrm{wfs}}| \leq \tfrac{9}{4}\epsilon$ in the definition of $T_2$ implies 
$$\big|\widehat R_{\mathrm{wfs}}-R_{\mathrm{wfs}}\big| \leq  {C}\epsilon^{1/3}+\tfrac{9}{4}\epsilon.$$
We have thus proved
%\begin{equation} \label{eq: T2 final 1}
$$
T_2 {\bf 1}_{\bigcap_{i = 1}^3\Omega_i} \leq  {C}\epsilon^{1/3}+\tfrac{9}{4}\epsilon \leq  {C'}\epsilon^{1/3}.
$$
%\end{equation}
On the complementary event $\Omega_3^c = \{\widehat R_{\mathrm{wfs}}+\epsilon \geq R_{\mathrm{wfs}}\}$, we have, on the one hand 
%\begin{equation} \label{eq: neg omega 3}
$$
R_{\mathrm{wfs}} - \widehat R_{\mathrm{wfs}} \leq \epsilon.
$$
%\end{equation}
But on the other hand, on $\{\widehat R_\ell \geq \widehat R_{\mathrm{wfs}}\} \cap \Omega_1$, we have
\begin{align*}
\widehat R_{\mathrm{wfs}}-R_{\mathrm{wfs}} & \leq  \widehat R_\ell - R_{\mathrm{wfs}} \nonumber \\
& \leq R_\ell - R_{\mathrm{wfs}} + C_1\epsilon^{1/3} \nonumber \\
& \leq  \tfrac{9}{4}\epsilon + C_1\epsilon^{1/3} \leq  {C}\epsilon^{1/3} \label{eq: final omega 3 compl}
\end{align*}
thanks to the condition $|R_\ell - R_{\mathrm{wfs}}| \leq \tfrac{9}{4}\epsilon$ in the definition of $T_2$. Combining these bounds, we obtain 
%\begin{equation} \label{eq: T2 final 2}
$$
T_2  (1-{\bf 1}_{\Omega_3}){\bf 1}_{\Omega_1} \leq {C}\epsilon^{1/3}.
$$
%\end{equation}
Putting together this estimate and the bound $T_2{\bf 1}_{\bigcap_{i = 1}^3\Omega_i} \leq {C}\epsilon^{1/3}$ we established previously, we derive
$$T_2{\bf 1}_{\Omega_1 \cap \Omega_2} \leq {C}\epsilon^{1/3}.$$
We finally turn to the term $T_3$. On $\{\widehat R_{\mathrm{wfs}} \geq R_\ell\}$ intersected with $\{\widehat R_\ell \geq \widehat R_{\mathrm{wfs}}\} \cap \Omega_1$, we have
$$
0 < R_\ell \leq \widehat R_{\mathrm{wfs}} \leq \widehat R_\ell  \leq R_\ell+ C_1\epsilon^{1/3}
$$
which yields the estimate 
%\begin{equation} \label{eq: upper last}
$$
| \widehat R_{\mathrm{wfs}} - R_\ell | \leq C_1\epsilon^{1/3}\;\;\text{on}\;\;\{\widehat R_{\mathrm{wfs}} \geq R_\ell\} \cap \{\widehat R_\ell \geq \widehat R_{\mathrm{wfs}}\} \cap \Omega_1.
%\end{equation}
$$
Alternatively, on the complementary event $\{\widehat R_{\mathrm{wfs}} < R_\ell\}$ intersected with $\{\widehat R_\ell \geq \widehat R_{\mathrm{wfs}}\}\cap \Omega_2$ we have $\widehat R_{\mathrm{wfs}} -  5\epsilon \leq  R_\ell -\sqrt{R_\ell^2-(\widehat R_{\mathrm{wfs}}+\epsilon)^2}$ in the same way as for the term $T_2$, provided $\widehat R_{\mathrm{wfs}}+\epsilon < R_\ell$. This implies $\widehat R_{\mathrm{wfs}} \geq R_\ell {-C\epsilon}$. {Otherwise} $\widehat R_{\mathrm{wfs}}+\epsilon \geq R_\ell$ holds true. {In any event, we obtain}
%\begin{equation} \label{eq: one sided first}
$
- {C}\epsilon \leq \widehat R_{\mathrm{wfs}} -R_\ell.
$
%\end{equation}
Since 
%\begin{equation} \label{eq: one sided last}
 $\widehat R_{\mathrm{wfs}} -R_\ell \leq C_1\epsilon^{1/3}$ on $\Omega_1$, 
%\end{equation}
we { conclude}
%\begin{equation} \label{eq: combination}
$$
\big| \widehat R_{\mathrm{wfs}} -R_\ell \big| \leq \epsilon + C_1\epsilon^{1/3} \leq C\epsilon^{1/3}\;\;\text{on}\;\;\{\widehat R_{\mathrm{wfs}} < R_\ell\} \cap \{\widehat R_\ell \geq \widehat R_{\mathrm{wfs}}\} \cap \Omega_1 \cap \Omega_2.
%\end{equation}
$$
Combining these two bounds for $\big| \widehat R_{\mathrm{wfs}} -R_\ell \big|$, we finally derive
%\begin{equation} \label{eq: final T3}
$$
T_3 {\bf 1}_{\Omega_1 \cap \Omega_2} \leq C\epsilon^{1/3}.
$$
%\end{equation}
Putting together our successive estimates for $T_1, T_2$ and $T_3$, we have proved
\begin{equation*} \label{eq: final step 2}
\big|\widehat R-R\big|{\bf 1}_{\{\widehat R_\ell \geq \widehat R_{\mathrm{wfs}}\}}{\bf 1}_{\Omega_1 \cap \Omega_2}  \leq \epsilon + 2C\epsilon^{1/3}\leq C'\epsilon^{1/3}.
\end{equation*}
\noindent {\it Step 3).}  Combining Step 1) and Step 2) yields
$$\big|\widehat R-R\big|{\bf 1}_{\Omega_1 \cap \Omega_2} \leq C\epsilon^{1/3}.$$
By Theorem \ref{upper local condition}, we have $P^{\otimes n}(\Omega_1) \geq 1-2n^{-k/d}$ as soon as $C_1 \geq C_{d,k,R_{\mathrm{min}},f_{\mathrm{min}}, \mathbf{L}}$. 
%in \eqref{eq: est loc proba}. 
By Theorem \ref{ALestimator}, we have  $P^{\otimes n}(\Omega_2) \geq 1-2n^{-k/d}$. The first estimate in Theorem \ref{th: upper glob reach} follows for $k \geq 3$. The improvement in the case $k=4$ is done in exactly the same way and we omit it. 

Finally, integrating, we obtain
\begin{align*}
\mathbf{E}_{P^\otimes n}\big[\big|\widehat R-R\big|\big] & \leq C\epsilon^{1/3}+2R_{\mathrm{max}} \big(P^{\otimes n}(\Omega_1^c)+ P^{\otimes n}(\Omega_2^c)\big) \\
& \leq C\epsilon^{1/3}+4R_{\mathrm{max}}n^{-k/d} \leq C'\epsilon^{1/3} 
\end{align*}
and the second statement of Theorem \ref{th: upper glob reach} is proved for $k \geq 3$. The improvement in the case $k=4$ follows in similar fashion.
\end{proof}

\section{Minimax rates for reach estimators: Lower bounds} \label{sec: lower bounds}

We fix $\Rm$, $\bL$, $k$, $f_{\min}$ and $f_{\max}$ and recall the classes $\cP^k_\alpha$ which were defined in Section \ref{sec: upperbounds}, parametrized by the gap $\alpha
\leq R_{\ell} - \rwfs$. These sub-models are such that $\cP^k = \cup_{\alpha \geq 0} \cP^k_\alpha$. 

\begin{theorem} \label{prp:low}  If $f_{\min}$ is small enough and $f_{\max}$, $\bL$ are large enough 
(depending on $\Rm$, and on $\alpha$ for the second statement), then we have the following lower bounds on the reach estimation problem
\begin{align*} 
\liminf_{n\rightarrow \infty} n^{(k-2)/d} \inf_{\widehat R} \sup_{P \in \cP^k_0} \bbE_{P^{\otimes n}}[|\widehat R - R|] &\geq C_0 > 0  \:\:\:\:\:\:\:\:\: \text{and} \\ 
\liminf_{n\rightarrow \infty} n^{k/d} \inf_{\widehat R} \sup_{P \in \cP^k_\alpha} \bbE_{P^{\otimes n}}[|\widehat R - R|] &\geq C_{\alpha}  > 0 \:\:\:\:\:\:\:\:\: \forall \alpha > 0 
\end{align*}
with $C_0$ depending on $\Rm$ and $C_{\alpha}$ depending on $\Rm$ and $\alpha$.
\end{theorem}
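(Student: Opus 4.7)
My plan is the classical Le Cam two-point method. For each claim I will construct two probability distributions $P_0, P_1$ in the prescribed submodel with $|R(P_0)-R(P_1)| \geq 2\delta_n$ and $\TV(P_0^{\otimes n}, P_1^{\otimes n}) \leq 1/2$, which forces
$$
\inf_{\widehat R}\sup_{P \in \cP^k_\alpha} \bbE_{P^{\otimes n}}\big[|\widehat R-R|\big] \;\geq\; \tfrac{1}{4}\delta_n.
$$
Both constructions follow the same template. I fix a base manifold $\em_0$ in the relevant submodel, chosen with strict slack in every regularity constant, and form $\em_1$ by modifying a small chart of $\em_0$ via a rescaled, fixed $\cC^k$-smooth bump of width $w$ and height $h_w$. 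Since the $k$-th derivative of the bump scales as $h_w/w^k$, the $\cC^k$-membership of $\em_1$ in $\ckrl$ forces $h_w \leq c L_k w^k$. Taking $P_i$ uniform on $\em_i$, the two measures differ on a patch of $P_i$-mass of order $w^d$, so $\TV(P_0,P_1) = O(w^d)$ and consequently $\TV(P_0^{\otimes n}, P_1^{\otimes n}) \leq n\,\TV(P_0,P_1) = O(nw^d)$; the scale $w \asymp n^{-1/d}$ then makes the two hypotheses indistinguishable.

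For the $\cP^k_0$ rate, I take $\em_0$ to be a round $d$-sphere of radius slightly exceeding $\rmin$ inside a $(d+1)$-dimensional affine subspace of $\rrr^D$, so that $R_\ell(\em_0) = \rwfs(\em_0)$ and $\em_0 \in \cM^k_0$. Attaching an outward $\cC^k$ bump at a point $p \in \em_0$ of height $h_w \asymp w^k$ produces $\em_1$: the bump adds $\Theta(h_w/w^2) = \Theta(w^{k-2})$ to $\|\Pi_p\|_{\op}$, so $1/R_\ell$ grows by that amount and $R_\ell(\em_0)-R_\ell(\em_1) \asymp w^{k-2}$; meanwhile $\rwfs$ shifts only by $O(w^k)$ since the perturbation is local, so $\em_1 \in \cM^k_0$ as well. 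This gives $|R(\em_0)-R(\em_1)| \asymp w^{k-2} \asymp n^{-(k-2)/d}$. For the $\cP^k_\alpha$ rate, I take $\em_0$ to realize its reach at a bottleneck --- two nearly-flat parallel $d$-dimensional patches at perpendicular distance $2r_0$, smoothly capped into a closed manifold with local reach $r_{\mathrm{cap}} > r_0 + \alpha + \epsilon$ for some fixed slack $\epsilon>0$. Pushing one sheet perpendicularly near the bottleneck by a $\cC^k$ bump of height $h_w \asymp w^k$ shifts the bottleneck half-width by $\Theta(h_w)$, so $|\rwfs(\em_0)-\rwfs(\em_1)| \asymp w^k$; the bump contribution to $\|\Pi\|_{\op}$ is only $O(w^{k-2})$, negligible compared to the capping curvature $1/r_{\mathrm{cap}}$ for small $w$, so $R_\ell(\em_1) = r_{\mathrm{cap}}$, the gap $R_\ell-\rwfs \geq \alpha$ is preserved, $\em_1 \in \cM^k_\alpha$, and $|R(\em_0)-R(\em_1)| \asymp w^k \asymp n^{-k/d}$.

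The main difficulty is to make these constructions quantitatively compatible with the definitions of $\ckrl$ and $\mathscr{P}^k$: every constant $\rmin, L_\perp, L_3, \ldots, L_k$ as well as the density bounds $[f_{\min}, f_{\max}]$ must survive the perturbation, and in the bottleneck case the inequality $R_\ell-\rwfs \geq \alpha$ must be maintained. The remedy is to build $\phi_w$ from a single fixed $\cC^k$-profile by dilation and to choose the base manifold $\em_0$ and density $f_0$ with strict slack in all parameters. This is precisely the purpose of the hypothesis that $f_{\min}$ is sufficiently small and $f_{\max}, \bL$ sufficiently large depending on $\rmin$ (and on $\alpha$ for the second statement).
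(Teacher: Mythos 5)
Your proposal is correct and follows essentially the same route as the paper: Le Cam's two-point method with a round $d$-sphere (locally perturbed by an outward bump of width $w$ and height $\asymp w^k$) for $\cP^k_0$, and a two-sheeted bottleneck manifold (with the bump narrowing the bottleneck) for $\cP^k_\alpha$, with the scale $w \asymp n^{-1/d}$ in both cases. The paper fills in the details you flag informally — in particular, taking the base radius as large as $2\rmin$ so that the perturbed manifold's reach safely stays above $\rmin$, and a careful case analysis of bottleneck pairs to confirm that $\rwfs$ does not drop below $R_\ell$ after perturbation in the $\cP^k_0$ construction — but your plan is sound.
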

In particular, the minimax rate on the whole model $\cP^k$ is of order $n^{-\frac{k-2}{d}}$. To show this proposition, we will make use of Le Cam's Lemma, restated in our context.

\begin{lemma}[Le Cam Lemma, \cite{Yu97}] \label{lem:lecam} For any two $P_1, P_2 \in \cP$, where $\cP$ is a model of manifold-supported probability measures, we have
\begin{align*}
\inf_{\widehat R} \sup_{P \in \cP} \bbE_{P^{\otimes n}}[|\widehat R - R|] \geq \frac12 |R_1 - R_2| (1 - \TV(P_1,P_2))^n,
\end{align*}
where $\TV$ denotes the total variation distance between measures and $R_1$ (respectively $R_2$) denotes the reach of the support of $P_1$ (resp $P_2$). 
\end{lemma}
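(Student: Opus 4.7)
The plan is to invoke Le Cam's Lemma (Lemma \ref{lem:lecam}) twice, each time producing a pair of manifolds $\em_0, \em_1$ lying in the appropriate subclass, carrying uniform probability measures $P_0, P_1$, such that $|R(\em_0) - R(\em_1)|$ meets the target rate while $\TV(P_0,P_1) = O(1/n)$. In both cases the perturbation is a local $\curlyc^k$ bump: on an intrinsic ball of radius $r$ we add a normal deformation of the form $\delta \, \varphi(\cdot / r)$ for a fixed smooth compactly supported profile $\varphi$. Keeping the $i$-th derivative controlled ($i \leq k$) forces $\delta \lesssim r^k$, while the TV distance between the uniform distributions on $\em_0$ and the perturbed $\em_1$ is bounded by the volume of the affected patch, of order $r^d$. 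Taking $r \sim n^{-1/d}$ ensures $(1-\TV)^n$ stays bounded below.

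Case $\cP^k_0$: Fix $\em_0$ to be a round $d$-sphere of radius $\rho > \Rm$ in a $(d+1)$-dimensional affine subspace of $\rrr^D$, so that $R(\em_0) = R_\ell(\em_0) = \rwfs(\em_0) = \rho$ and $\em_0 \in \cM^k_0$. Perturb $\em_0$ on a ball of radius $r \sim n^{-1/d}$ around some point $p$ by a normal bump of height $\delta \sim r^k$ chosen so that $\|\Pi_p\|_{\op}$ increases by an amount of order $\delta / r^2 \sim r^{k-2}$. Since the bump is compactly supported and very small compared to $r$, the weak feature size is unchanged up to lower-order terms, so $\em_1 \in \cM^k_0$ and
$$R_\ell(\em_0) - R_\ell(\em_1) = \Theta(r^{k-2}) \sim n^{-(k-2)/d}.$$
Le Cam applied to the uniform distributions $P_0, P_1$ on $\em_0, \em_1$ yields the first lower bound, provided the slack in $\bL$ and in $f_{\min}, f_{\max}$ is chosen large enough (depending on $\Rm$) to accommodate the bump.

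Case $\cP^k_\alpha$: Take $\em_0 \in \cM^k_\alpha$ whose reach is attained at a bottleneck --- concretely, two round $d$-spheres of radius $\rho \gg \Rm$ placed at bottleneck distance $2\Rm$ and joined by a smooth $\curlyc^k$ corridor far from the bottleneck, so that $R_\ell(\em_0) = \rho$, $\rwfs(\em_0) = \Rm$, and the gap $\rho - \Rm$ exceeds $\alpha$. Let $q_0, q_1$ be the bottleneck points and let $u = (q_1 - q_0)/\|q_1-q_0\|$. Perturb $\em_0$ on a normal patch of radius $r \sim n^{-1/d}$ around $q_0$ by a normal displacement of magnitude $\delta \sim r^k$ along $u$, obtaining $\em_1$. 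This displaces $q_0$ by $\Theta(\delta)$ toward $q_1$, so $\rwfs(\em_1) = \Rm - \delta/2 + o(\delta)$, a shift of order $n^{-k/d}$, while the local reach changes by at most $O(r^{k-2}) = o(\alpha)$ for $n$ large. Thus $\em_1 \in \cM^k_\alpha$ and $R(\em_1) = \rwfs(\em_1)$; Le Cam delivers the second bound.

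The main obstacle is the quantitative verification that each perturbed $\em_1$ lies in $\ckrl$ with the prescribed $\bL$ and $\Rm$: one must control all derivatives of the Monge parametrization uniformly at every point of the perturbed patch (which follows from the scaling $\delta \sim r^k$ and the choice of a fixed smooth profile $\varphi$), together with a reach-stability argument certifying $R(\em_1) \geq \Rm$. In the global case there is an additional subtlety in certifying that the bottleneck pair of $\em_1$ consists of points close to $q_0, q_1$ (so the change in $\rwfs$ really matches the prescribed displacement) and that $R_\ell(\em_1)$ does not collapse below $\rwfs(\em_1) + \alpha$. These requirements are exactly why the class constants $f_{\min}, f_{\max}, \bL$ must be chosen suitably, in particular depending on $\alpha$ in the second statement.
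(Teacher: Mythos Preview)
Your proposal does not address the stated lemma at all. Lemma~\ref{lem:lecam} is the classical Le Cam two-point inequality; the paper does not prove it but merely quotes it from \cite{Yu97}. A proof would proceed by reducing estimation to testing: for any estimator $\widehat R$, one of the two errors $|\widehat R - R_1|$, $|\widehat R - R_2|$ must exceed $\tfrac12|R_1-R_2|$, so the worst-case risk is at least $\tfrac12|R_1-R_2|$ times the minimum probability of error in testing $P_1^{\otimes n}$ against $P_2^{\otimes n}$, which is bounded below via $(1-\TV(P_1,P_2))^n$. None of this appears in your write-up.

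What you have actually sketched is a proof of Theorem~\ref{prp:low}, which \emph{applies} Le Cam's Lemma. On that reading, your outline is essentially identical to the paper's argument: a round $d$-sphere perturbed by a localized $\curlyc^k$ bump of height $\sim r^k$ on a patch of radius $r\sim n^{-1/d}$ for the $\cP^k_0$ case (producing a change of order $r^{k-2}$ in $R_\ell$), and a bottlenecked manifold perturbed at one bottleneck point for the $\cP^k_\alpha$ case (producing a change of order $r^k$ in $\rwfs$). The paper's construction differs only in detail---it uses two parallel disks joined by a smooth collar rather than two spheres joined by a corridor---and it makes the regularity verification explicit by invoking a stability result (Proposition~A.5 of \cite{AL}) for the perturbed manifold, together with Lemma~\ref{lem:gr} to read off $\|\Pi\|_{\op}$ from a graph parametrization. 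Your remarks about the ``main obstacle'' correctly identify exactly these points.
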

Therefore, one needs to compute the total variation distance between two given manifold-supported measures. When these measures are uniform over their support, we have the following convenient formula.

\begin{lemma} \label{lem:tv} Let $M_1, M_2$ be two compact $d$-dimensional submanifolds of $\bbR^D$ and let $P_1$, $P_2$ be the uniform distributions over $M_1$ and $M_2$. Then we have
\begin{align*}
\TV(P_1,P_2) = \frac{\cH^d(M_2 \setminus M_1)}{\vol M_2} \; \; \; \; \text{if}~~~~~\vol M_2 \geq \vol M_1,
\end{align*} 
where $\cH^d$ denotes the $d$-dimensional Hausdorff measure on $\mathbb R^D$. 
\end{lemma}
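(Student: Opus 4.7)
The plan is to express the total variation distance as an $L^1$ integral with respect to a common dominating measure and then decompose the support into three disjoint pieces. Since $M_1$ and $M_2$ are both $d$-dimensional $\mathscr{C}^k$ submanifolds, each is $d$-rectifiable, and the Riemannian volume on $M_i$ agrees with $\mathcal{H}^d\restriction_{M_i}$. Taking $\mu = \mathcal{H}^d\restriction_{M_1 \cup M_2}$ as the reference measure, both $P_1$ and $P_2$ are absolutely continuous with respect to $\mu$, with densities $f_i = \mathbf{1}_{M_i}/\vol M_i$. Thus
$$
\TV(P_1,P_2) \;=\; \tfrac12 \int_{M_1 \cup M_2} \bigl| f_1 - f_2 \bigr|\, d\mathcal{H}^d.
$$

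Next I would partition $M_1 \cup M_2$ into the three disjoint Borel sets $M_1 \cap M_2$, $M_1 \setminus M_2$ and $M_2 \setminus M_1$, on each of which $f_1 - f_2$ is a constant:
$$
f_1 - f_2 \;=\; \begin{cases}
\frac{1}{\vol M_1} - \frac{1}{\vol M_2} & \text{on } M_1 \cap M_2,\\[4pt]
\frac{1}{\vol M_1} & \text{on } M_1 \setminus M_2,\\[4pt]
-\frac{1}{\vol M_2} & \text{on } M_2 \setminus M_1.
\end{cases}
$$
The hypothesis $\vol M_2 \geq \vol M_1$ ensures that the first of these constants is non-negative, which removes all absolute values.

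Setting $V_i = \vol M_i$, $a = \mathcal{H}^d(M_1 \cap M_2)$, $b = \mathcal{H}^d(M_1 \setminus M_2)$ and $c = \mathcal{H}^d(M_2 \setminus M_1)$, we have the identities $V_1 = a+b$ and $V_2 = a+c$. The integral then reads
$$
2\TV(P_1,P_2) \;=\; \Bigl(\tfrac{1}{V_1} - \tfrac{1}{V_2}\Bigr) a \;+\; \tfrac{b}{V_1} \;+\; \tfrac{c}{V_2} \;=\; \tfrac{a+b}{V_1} + \tfrac{c-a}{V_2} \;=\; 1 + \tfrac{c-a}{V_2},
$$
and since $a = V_2 - c$, the right-hand side equals $2c/V_2$. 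Dividing by $2$ yields the claim.

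The argument is essentially bookkeeping, so there is no real obstacle. The only point that deserves a moment's care is the legitimacy of using $\mathcal{H}^d\restriction_{M_1 \cup M_2}$ as a common dominating measure; this is justified by the fact that, on each $M_i$, the $d$-dimensional Hausdorff measure coincides with the Riemannian volume measure, and the overlapping piece $M_1 \cap M_2$ is treated correctly because both densities are defined there with respect to the same $\mathcal{H}^d$.
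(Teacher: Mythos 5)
Your proof is correct and follows essentially the same route as the paper's: express $\TV$ as one-half the $L^1$-distance of the densities with respect to $\mathcal{H}^d$, partition $M_1\cup M_2$ into $M_1\cap M_2$, $M_1\setminus M_2$, $M_2\setminus M_1$, and simplify. The only difference is notational bookkeeping ($a,b,c,V_1,V_2$); the substance is identical.
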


\begin{proof} First note that $P_1$ and $P_2$ are absolutely continuous with respect to $\cH^d$ with densities $\frac{1}{\vol M_1} \ind_{M_1}$ and  $\frac{1}{\vol M_2} \ind_{M_2}$ respectively.  Therefore, we have the following chain of equalities.
\begin{align*}
&\TV(P_1,P_2) \\
& = \frac12 \int \left|\frac{1}{\vol M_1} \ind_{M_1} - \frac{1}{\vol M_2} \ind_{M_2} \right| \diff \cH^d \\
&= \frac{\cH^d(M_1 \setminus M_2)}{2\vol M_1} + \frac{\cH^d(M_2 \setminus M_1)}{2\vol M_2} + \frac12\cH^d(M_1 \cap M_2) \left( \frac{1}{\vol M_1} - \frac{1}{\vol M_2} \right) \\
&= \frac12 \left\{1 + \frac{\cH^d(M_2 \setminus M_1) - \cH^d(M_1 \cap M_2)}{\vol M_2}\right\} \\
&= \frac{\cH^d(M_2 \setminus M_1)}{\vol M_2}. \qedhere
\end{align*}
\end{proof}

Before proving Theorem \ref{prp:low} we need to introduce the following technical result:

\begin{lemma} \label{lem:gr}
Let $\Phi : \bbR^d \rightarrow \bbR$ be a smooth function and let $M = \{(v,\Phi(v))~~|~~v \in \bbR^d\} \subseteq \bbR^{d+1}$ be its graph. The second fundamental form of $M$ at the point $x = (v,\Phi(v)) \in M$ is given by
\begin{equation*}
\Pi_x(u,w) = \frac{\diff^2 \Phi(v)[\pr(u),\pr(w)]}{\sqrt{1 + \|\nabla \Phi(v)\|^2}},\quad \text{ for all } u,w \in T_x M
\end{equation*}
where $\pr$ is the linear projection to $\bbR^d \subseteq \bbR^{d+1}$.
\end{lemma}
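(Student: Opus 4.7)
The plan is a direct computation using the standard graph parametrization. First I would set $\psi(v) = (v, \Phi(v))$ so that $M = \psi(\bbR^d)$, and identify the tangent space $T_x M$ at $x = \psi(v)$ as the span of the vectors $\partial_i \psi(v) = (e_i, \partial_i \Phi(v))$, $i = 1, \ldots, d$. A vector $u \in T_x M$ is then of the form $u = (\pr(u),\langle \nabla\Phi(v), \pr(u)\rangle)$, so $u$ is determined by $\pr(u)$; in particular, if $u = \sum_i u^i \,\partial_i\psi(v)$ then $\pr(u) = (u^1, \ldots, u^d)$.

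Next I would write down the unit normal to $M$ at $x$, namely
\begin{equation*}
\nu_x = \frac{1}{\sqrt{1 + \|\nabla\Phi(v)\|^2}} \bigl(-\nabla\Phi(v),\, 1\bigr),
\end{equation*}
which spans the (one-dimensional) normal space $N_x M$. Extending $u, w \in T_x M$ locally to tangent vector fields $U,W$ on $M$, the second fundamental form is given by the normal component $\Pi_x(u,w) = (D_U W)^{\perp}$ of the ambient Euclidean derivative. Since the normal bundle is a line bundle spanned by $\nu_x$, this is the scalar $\langle D_U W, \nu_x\rangle$ up to the factor $\nu_x$; under the identification used in the statement, it is simply that scalar.

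For the coordinate vector fields $\partial_i\psi, \partial_j\psi$, the Euclidean derivative in $\bbR^{d+1}$ is just the partial derivative of the parametrization, $\partial_i \partial_j \psi(v) = (0, \partial_i \partial_j \Phi(v))$, and thus
\begin{equation*}
\bigl\langle \partial_i\partial_j\psi(v),\nu_x\bigr\rangle = \frac{\partial_i\partial_j\Phi(v)}{\sqrt{1+\|\nabla\Phi(v)\|^2}}.
\end{equation*}
By bilinearity of both sides, writing $u = \sum u^i \partial_i\psi$, $w = \sum w^j \partial_j\psi$, and using that $\diff^2\Phi(v)[\pr(u),\pr(w)] = \sum_{i,j} u^i w^j \,\partial_i\partial_j\Phi(v)$, the claimed formula follows.

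The computation itself is routine; the only real subtlety is matching the paper's convention for $\Pi_x$ as a scalar-valued form in codimension one (implicit in the formula, since the right-hand side is a scalar). As soon as this identification with $\nu_x$ is fixed the proof is a single-line calculation, so I do not anticipate any real obstacle.
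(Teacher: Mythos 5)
Your proof is correct and reaches the same formula, but you take the dual route to the paper's. You compute $\Pi_x(u,w)$ as the normal component $\langle D_U W,\nu_x\rangle$ of the ambient derivative of a tangent field (the Gauss formula), whereas the paper computes it as $-\langle \diff n(x)[u],w\rangle$ by differentiating the unit normal field $n$ and pairing against $w$ (the Weingarten relation). The two are equivalent via the identity $\langle D_U W,\nu\rangle=-\langle W, D_U\nu\rangle$, which follows from differentiating $\langle W,\nu\rangle=0$. Your version is arguably the cleaner computation here: the second coordinate derivative of $\psi(v)=(v,\Phi(v))$ is simply $(0,\partial_i\partial_j\Phi(v))$, so one never needs to differentiate the normalising factor $(1+\|\nabla\Phi\|^2)^{-1/2}$, which the paper has to track explicitly when computing $\diff n(x)[u]$ (and then observes that the resulting tangential correction is killed by the pairing with $w$). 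Both proofs use the same parametrization, the same description of $T_xM$, the same unit normal, and the same bilinearity reduction to coordinate vectors, so the difference is confined to which of the two classical formulas for the second fundamental form is invoked. You also correctly flag the codimension-one identification of $\Pi_x$ with a scalar via $\nu_x$, which is indeed implicit in the paper's statement.
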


\begin{proof} We define $\Psi : v \in \bbR^d \mapsto (v,\Phi(v)) \in \bbR^{d+1}$ so that $M$ is the image of $\bbR^d$ through the diffeomorphism $\Psi$. Let $x \in M$ and let $v \in \bbR^d$ be such that $x = \Psi(v)$. The tangent space $T_x M$ is given by $T_x M = \{ \diff \Psi(v)[h] = (h, \langle h, \nabla \Phi(v) \rangle )~~|~~ h \in \bbR^d\}$, so that a normal vector field on $M$ is given by
\begin{equation*}
n(x) = \left( -\frac{\nabla \Phi(v)}{\sqrt{1 + \|\nabla \Phi(v)\|^2}}, \frac{1}{\sqrt{1 + \|\nabla \Phi(v)\|^2}}\right) \in \bbR^{d+1}. 
\end{equation*}
For $u \in T_x M$, where $h = \pr u$, we have
\begin{align*}
\diff n(x)[u] = \left( - \frac{H\Phi(v)h}{\sqrt{1 + \|\nabla \Phi(v)\|^2}}, 0\right) - \frac{\langle H\Phi(v)h, \nabla \Phi(v) \rangle}{1 + \|\nabla \Phi(v)\|^2} n(x) ,
\end{align*}
where $H\Phi$ denotes the Hessian of $\Phi$.  Now for $w \in T_x M$ and $\eta = \pr w$, we have
\begin{align*}
\Pi_x(u,w) &= - \langle \diff n(x)[u], w \rangle = \left\langle  \left( \frac{H\Phi(v)h}{\sqrt{1 + \|\nabla \Phi(v)\|^2}}, 0\right),  (\eta, \langle \eta, \nabla \Phi(v) \rangle ) \right \rangle \\
&=  \left\langle   \frac{H\Phi(v)h}{\sqrt{1 + \|\nabla \Phi(v)\|^2}},  \eta \right \rangle = \frac{\diff^2 \Phi(v)[h,\eta]}{\sqrt{1 + \|\nabla \Phi(v)\|^2}} 
\end{align*}
concluding the proof. 
\end{proof}

We are now ready to prove Theorem \ref{prp:low}.

\begin{proof}[Proof of Theorem \ref{prp:low}] 
\noindent {\it Step 1: The case of $\cP^k_0$.} Let $M$ be the $d$-dimensional sphere in $\bbR^{d+1}$ of radius $r$ centered at $- r e_{d+1}$, where $e_{d+1} = (0, \ldots, 0, 1)$. We choose $r$ to be such that $r \geq 2\Rm$. Since $M$ is smooth, there exists $\bL^* \in \bbR^{k-2}$ (depending on $r$) such that $M \in \mathfrak{C}^k_{r,\bL^*}$ and thus the uniform probability $P$ on $M$ is in $\cP^k_{r,\bL^*}(a^*,a^*)$ (see Definition \ref{def:pk}) 
with $a^* = (r^d s_d)^{-1}$ and $s_d$ being the volume of the unit $d$-dimensional sphere.  

Let us now perturb $M$ to $M_\gamma$, as illustrated in Figure \ref{fig:0}.
Define for any $\gamma > 0$
\begin{equation}
\Phi_\gamma : \begin{cases} \bbR^{d+1} \rightarrow \bbR^{d+1} \\
z \mapsto z + \gamma^k \Psi(z/\gamma)e_{d+1}, \nonumber
\end{cases}
\end{equation}
where $\Psi(z) = \psi(\|z\|)$ and where $\psi : \bbR \rightarrow \bbR$ is a smooth, even, non-trivial, positive map supported on $[-1,1]$, decreasing on $[0,1]$, and with $\phi''(0) < 0$. The above map is a global diffeomorphism as soon as $\gamma^{k-1} \|\diff \Psi \|_{\op,\infty} < 1$. Moreover, we have $\|\diff \Phi_\gamma-I_D\|_{\op,\infty} = \gamma^{k-1}   \|\diff \Psi \|_{\op,\infty}$ and $\|\diff^j \Phi_\gamma\|_{\op,\infty} \leq \gamma^{k-j} \|\diff^j\Psi\|$, so that, provided $\|\diff^k \Psi\|$ is chosen small enough (depending on $r$) and that $\gamma$ is small enough (depending again on $r$), then we can apply Proposition A.5 from the supplementary material in \cite{AL} to show that the submanifold $M_\gamma = \Phi_\gamma(M)$ is in $\mathfrak{C}^k_{r/2, 2\bL^*}$. 
\begin{figure}[ht!]
\centering
\includegraphics[scale=.11]{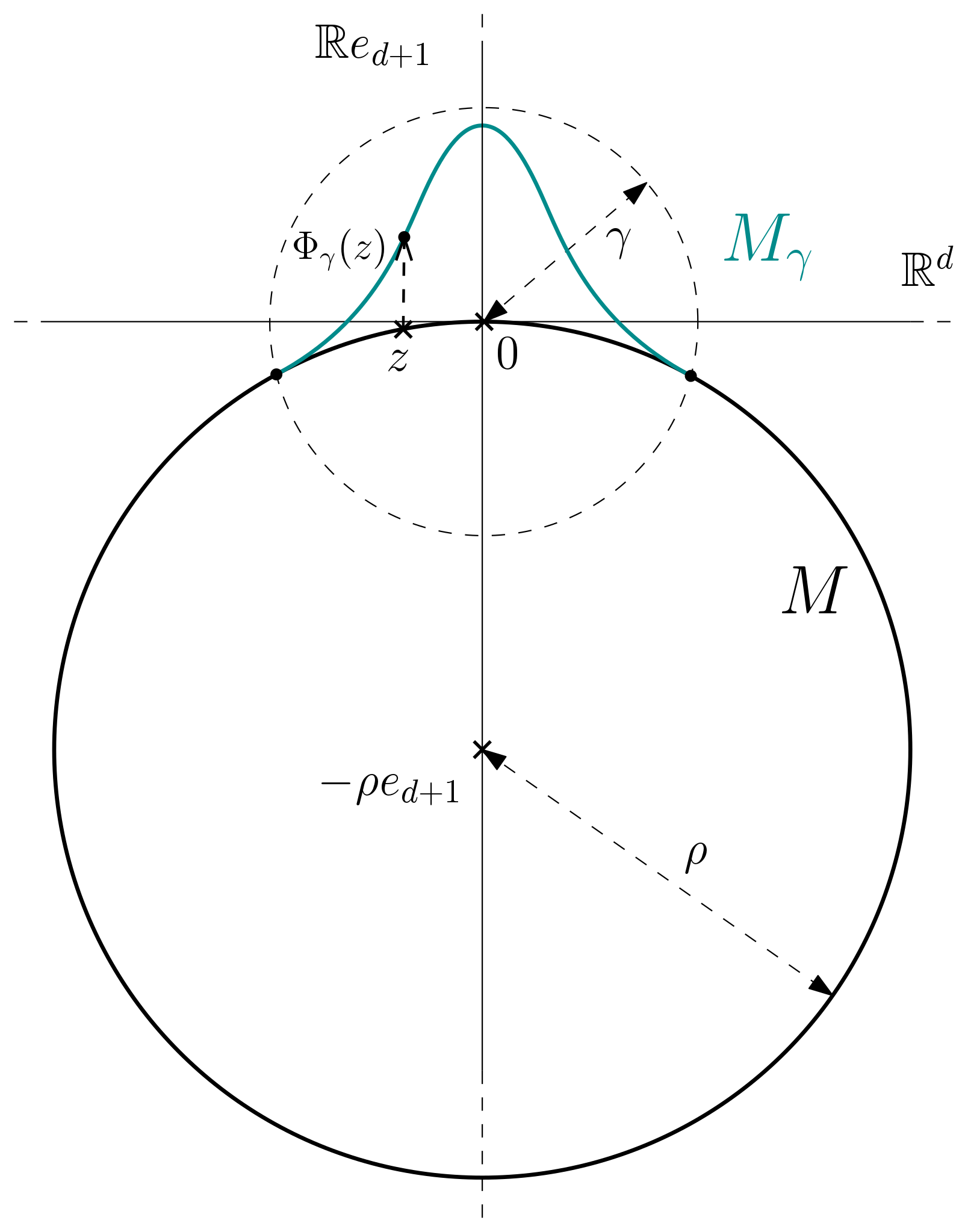}
\captionsetup{justification =  justified, margin = 1.1cm}
\caption{\small The submanifolds $M$ and $M_\gamma$ used in the proof of the first part of the lower bound.}
\label{fig:0}
\end{figure}

Then we have
\begin{equation}
\vol M_\gamma = \int_{M_\gamma} \diff \vol_{M_\gamma}(x) = \int_M |\det \diff \Phi_\gamma(z)|\, \diff\vol_M(z) \nonumber.
\end{equation}
Since $\frac12 \leq |\det d\Phi_\gamma| \leq 2$ for $\gamma$ small enough (depending on $r$), it follows that $\frac12 \vol M \leq \vol M_\gamma \leq 2 \vol M$ for the same values of $\gamma$, so that the uniform distribution $P_\gamma$ on $M_\gamma$ in is $\mathcal{P}^k_{r/2,2\mathbf{L}^*}(a^*/2,2a^*)$.
If we assume that $2\bL^* \leq \bL$, $f_{\min} \leq a^*/2$ and $2a^* \leq f_{\max}$ (which we do from now on) then we immediately have $P \in \cP^k_0$ and $P_\gamma \in \cP^k_0$, provided that $R_{\mathrm{wfs}}(M_\gamma) \geq R_{\ell}(M_\gamma)$.  We claim that the latter inequality holds.  

Around $0$, simple geometrical considerations show that $M_\gamma$ can be viewed as the graph of the function
\begin{equation}
\xi_\gamma : \begin{cases} \bbR^d \rightarrow \bbR \\
v \mapsto \sqrt{r^2 - \|v\|^2} - r + \gamma^k \psi\left(\frac{r}{\gamma}\sqrt{2 -2\sqrt{1 -\|v\|^2/r^2}}\right). \nonumber
\end{cases}
\end{equation}
Writing $\xi_\gamma(v) = \zeta_\gamma(\|v\|)$ with $\zeta_\gamma : \bbR \rightarrow \bbR$, a series of computations shows that
\begin{align*}
\zeta''_\gamma(0) = - \frac1r + r \gamma^{k-2} \psi''(0).
\end{align*}
Setting $c = -\psi''(0) > 0$ (which depends on $r$) we have, using 
Lemma \ref{lem:gr}, 
\begin{align*}
R_{\ell}(M_\gamma) \leq \frac{1}{|\zeta''_\gamma(0)|} = \frac{1}{\frac1r + c r \gamma^{k-2}} \leq r - \frac12 c r^2 \gamma^{k-2}
\end{align*}
as soon as $c r^2 \gamma^{k-2} \leq 1$. Now let us turn to the control of $R_{\mathrm{wfs}}(M_\gamma)$. We will show that the distance between any pair of bottleneck points is bounded below by $2r$. Let $(x,y) \in M_\gamma$ be a pair of bottleneck points. First notice that $x$ and $y$ cannot lie simultaneously in $B(0,\gamma)$ because $M_\gamma\cap B(0,\gamma)$ can be seen as a graph. If $x, y \in M_\gamma \setminus B(0,\gamma)$, then $d(x,y) = 2 r$ necessarily.  If, say, $x \in B(0,\gamma)$ and $y \in M_\gamma \setminus B(0,\gamma)$, then the open segment $(x,y)$ cross $M$ at a single point $z \in M$.  Therefore, we have that  $d(x,y) = d(x,z) + d(z,y)$. But now since $[x,y]$ is normal to $M_\gamma$ at point $y$, we know that $[z,y]$ is a diameter of $M$ so that $d(z,y) = 2r$ and thus $d(x,y) \geq 2r$.  We have shown that $R_{\mathrm{wfs}}(M_\gamma) \geq r \geq R_{\ell}(M_\gamma)$ for $\gamma$ small enough and thus $M_\gamma \in \cM^k_0$ and $P_\gamma \in \cP^k_0$. 

Now, by Lemma \ref{lem:tv}, we have that $\tv(P,P_\gamma) = \cH^d(M_\gamma\setminus M)/\vol M_\gamma \leq C \gamma^d$ for some constant $C$ depending on $r$. Applying now Le Cam's Lemma (Lemma \ref{lem:lecam}) and noting that $R(M) - R(M_\gamma) \geq cr^2 \gamma^{k-2}$, we obtain
\begin{align*}
\inf_{\widehat R} \sup_{P \in \cP^k_0} \bbE_{P^{\otimes n}}[|\widehat R - R|] \geq \frac12 cr^2 \gamma^{k-2} \times (1 - C \gamma^d)^n.
\end{align*}
Setting $\gamma = 1/(Cn)^{1/d}$, we know that for $n$ large enough (depending on $r$), we have
\begin{align*}
\inf_{\widehat R} \sup_{P \in \cP^k_0} \bbE_{P^{\otimes n}}[|\widehat R - R|] \geq \frac18 cr^2 (Cn)^{-(k-2)/d}.
\end{align*}
Set $r$ to be equal to $2\Rm$ and the first statement of Theorem \ref{prp:low} follows.\\  

\noindent {\it Step 2: The case of $\cP^k_\alpha$.} We next turn to the second part of the theorem.
We fix $\alpha > 0$ and construct a manifold $M \in \mathfrak{C}^k$ as follows. We consider the two parallel disks $B(0,2r) \subseteq \bbR^d \subseteq \bbR^{d+1}$ and $B(2r e_{d+1},2r) \subseteq 2r e_{d+1} + \bbR^d \subseteq \bbR^{d+1}$, with $r \geq 2\Rm$, and link them together so that $M$ satisfies the following:
\begin{itemize}
\item $M$ is a smooth submanifold of $\bbR^{d+1}$,
\item $M$ has reach $r$, and $(0,2r e_{d+1})$ is a reach attaining pair,
\item $R_{\ell}(M) \geq r + \alpha$.
\end{itemize} 
See Figure \ref{fig:a} for a schematic notion of such $M$, visualized with $d=1$.

\begin{figure}[ht!]
\centering
\includegraphics[scale=.11]{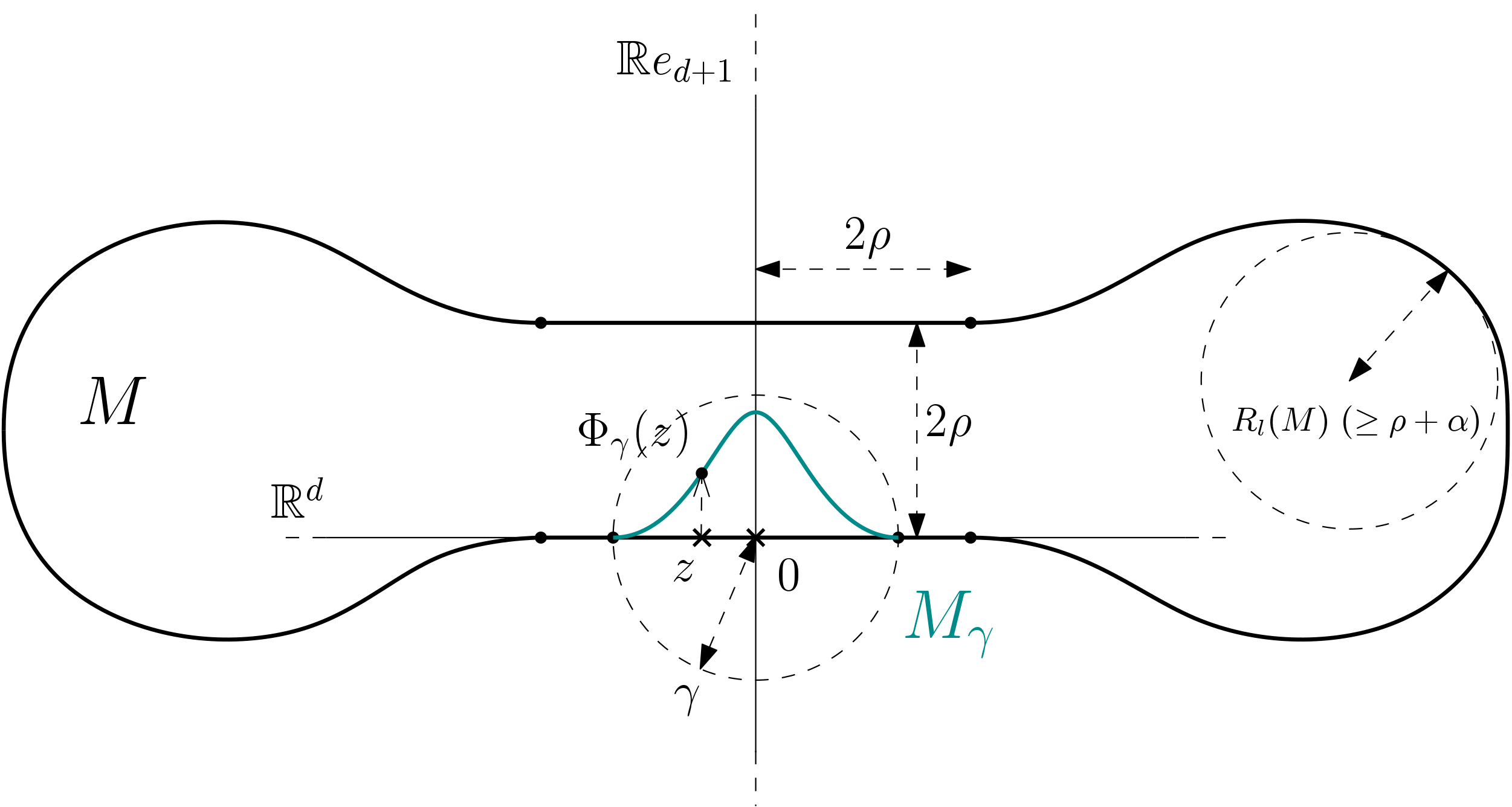}
\captionsetup{justification =  justified, margin = 1.1cm}
\caption{\small The submanifolds $M$ and $M_\gamma$ used in the proof of the second part of the lower bound.}
\label{fig:a}
\end{figure}

Furthermore, we know that there exists  $\bL^*$ (depending on $r$ and $\alpha$) such that $M \in \mathfrak{C}^k_{r, \bL^*}$ and $P \in \cP^k_{r,\bL^*}(a^*, a^*)$ where $a^* = 1/\vol M$ and where $P$ is the uniform probability over $M$. We again consider the map
\begin{equation*}
\Phi_\gamma : \begin{cases} \bbR^{d+1} \rightarrow \bbR^{d+1} \\
z \mapsto z + \gamma^k \Psi(z/\gamma)e_{d+1}.
\end{cases}
\end{equation*}
Similarly to the first part of the theorem, for $\gamma$  small enough (depending on $\alpha$ and $r$), we know that $M_\gamma = \Phi_\gamma(M)$ is a smooth submanifold in $\mathfrak{C}^k_{r/2, 2\bL^*}$ and that the uniform distribution $P_\gamma$ over $M_\gamma$ lies in $\cP^k_{r/2,2\bL^*}(a^*/2,2a^*)$. Again, assuming that $\bL \geq 2\bL^*$, $f_{\min} \leq a^*/2$ and $f_{\max} \geq 2a^*$, we have that $P \in \cP^k_\alpha$ and, furthermore, that $P_\gamma \in \cP^k_\alpha$, provided that $R_{\ell}(M_\gamma) \geq R_{\mathrm{wfs}}(M_\gamma) + \alpha$.  We claim that the latter inequality holds.

Since $\Psi$ is maximal at $0$, we know that $(\gamma^k \psi(0) e_{d+1}, 2r e_{d+1})$ is still a bottleneck pair, and thus $R_{\mathrm{wfs}}(M_\gamma) \leq r - c \gamma^k$ where we set $c = -2\psi(0)$ (depending on $\alpha$ and $r$). For the curvature, notice that it is unchanged outside of $B(0,\gamma)$ and that $M_\gamma$ is just the graph of $v \mapsto \gamma^k \Psi(v/\gamma)$ within this ball. Using Lemma \ref{lem:gr}, we thus have $R_{\ell}(M_{\gamma}) \geq \min \left\lbrace (r + \alpha) , (C\gamma^{k-2})^{-1} \right\rbrace$, with $C$ depending on $\alpha$ and $r$, so that $R_{\ell}(M_\gamma) \geq R_{\mathrm{wfs}}(M_\gamma) + \alpha$ for $\gamma$ small enough (depending on $\alpha$ and $r$), and therefore $M_\gamma \in \cM^k_\alpha$ and $P_\gamma \in \cP^k_\alpha$. 

Using Lemma \ref{lem:tv}, we have that $\tv(P,P_\gamma) = \cH^d(M_\gamma\setminus M)/\vol M_\gamma \leq \delta \gamma^d$ for some constant $\delta$ depending on $r$. Applying now Le Cam's Lemma (Lemma \ref{lem:lecam}) and noticing that $R(M) - R(M_\gamma) \geq c \gamma^{k}$, we get
\begin{align*}
\inf_{\widehat R} \sup_{P \in \cP^k_0} \bbE_{P^{\otimes n}}[|\widehat R - R|] \geq \frac12 c \gamma^{k} \times (1 - \delta \gamma^d)^n.
\end{align*}
Setting $\gamma = 1/(\delta n)^{1/d}$, we know that for $n$ large enough (depending on $r$ and $\alpha$), we have
\begin{align*}
\inf_{\widehat R} \sup_{P \in \cP^k_0} \bbE_{P^{\otimes n}}[|\widehat R - R|] \geq \frac18 c (\delta n)^{-k/d}.
\end{align*}
Setting $r = 2\Rm$ yields the result completing the proof of Theorem \ref{prp:low}.
\end{proof}

\noindent {\bf Acknowledgments}  It is a pleasure to thank the University of Oklahoma and the University of Paris--Dauphine for providing ideal working conditions, and for their support. J.\ Harvey was supported by a Daphne Jackson Fellowship sponsored by the U.K.\ Engineering and Physical Sciences Research Council and Swansea University. K.\ Shankar was  supported by the U.S.\ National Science Foundation during the completion of this work.  Any opinion, findings, and conclusions or recommendations expressed in this material are those of the authors and do not necessarily reflect the views of the National Science Foundation. The authors express their gratitude to the referees for the great care and attention shown to the manuscript, which has greatly improved the exposition.


\begin{thebibliography}{99}

\bibitem[AKCMRW19]{Reach} Aamari, E., J.\ Kim, F.\ Chazal, B.\ Michel, A.\ Rinaldo, and L.\ Wasserman (2019). \textit{Estimating the
reach of a manifold},  Electron.\ J.\ Stat.\ \textbf{13}(1), 1359--1399.

\bibitem[AL18]{AL18}  Aamari, E.\ and C.\ Levrard (2018). \textit{Stability and Minimax Optimality of Tangential Delaunay Complexes for Manifold Reconstruction}, Discrete Comput.\ Geom.\ \textbf{59}, 923--971.

\bibitem[AL19]{AL} Aamari, E.\ and C.\ Levrard (2019). \textit{Nonasymptotic rates for manifold, tangent space and curvature
estimation},  Ann.\ Statist.\ \textbf{47}(1), 177--204.

\bibitem[Alm86]{Alm} Almgren, F.\ (1986). \textit{Optimal isoperimetric inequalities}, Indiana Univ.\ Math.\ J.\ \textbf{35}(3), 451--547.


\bibitem[AL15]{AL15} Attali, D., and A.\ Lieutier (2015). \textit{Geometry-driven collapses for converting a \v{C}ech complex into a triangulation of a nicely triangulable shape.} Discrete Comput.\ Geom.\ \textbf{54}(4), 798--825.


\bibitem[ALS13]{ALS} Attali, D., A.\ Lieutier, and D.\ Salinas (2013).  \textit{Vietoris-–Rips complexes also provide topologically correct reconstructions of sampled shapes}, Comput.\ Geom.\ \textbf{46}(4), 448--465.

\bibitem[BRSSW12]{Bal12} Balakrishnan, S., A.\ Rinaldo, D.\ Sheehy, A.\ Singh, and L.\ Wasserman (2012). \textit{Minimax rates for
homology inference}, in Proceedings of the 15th International Conference on Artificial Intelligence and Statistics, pp. 64--72.

\bibitem[BLW19]{boisssonatetal}  Boissonnat, J.D. \ Lieutier A., Wintraecken, M. (2019) \textit{The reach, metric distortion, geodesic convexity and the variation of tangent spaces}, J. App. and Comput. Topology. \textbf{3}, 29--58. 

\bibitem[Div20]{Di} Divol, V.\ (2020). \textit{Minimax adaptive estimation in manifold inference}. arXiv:2001.04896.

	\bibitem[Car92]{docarmo} do Carmo, M.\ (1992).  \textit{Riemannian geometry}.  Boston, MA: Birkh\"auser.
	
	
\bibitem[ET94]{Boot} Efron, B.\ and R.J.\ Tibshirani (1994) \textit{An introduction to the Bootstrap}. Monograph on Statistics and Probability 57. London: Chapman \& Hall.	

\bibitem[Fed59]{federer} Federer, H.\ (1959).  \textit{Curvature measures}, Trans.\ Amer.\ Math.\ Soc.\ \textbf{91}, 418--491.

\bibitem[Gro94]{grove} Grove, K.\ (1994).  \textit{Critical point theory for distance functions}, in Proceedings of Symposia in Pure Mathematics \textbf{54}(3) pp.\ 357--386. Providence, RI: American Mathematical Society.

\bibitem[GPVW12]{Gal12} Genovese, C., M.\ Perone-Pacifico, I.\ Verdinelli, and L.\ Wasserman (2012). \textit{Minimax manifold
estimation},  J.\ Mach.\ Learn.\ Res.\ \textbf{13}, 1263--1291.

\bibitem[KRW19]{KRW16} Kim, J., A.\ Rinaldo, and L.\ Wasserman (2019). \textit{Minimax rates for estimating the dimension of a
manifold},  J.\ Comput.\ Geom.\ \textbf{10}(1), 42--95.

\bibitem[Lyt04]{Lyt} Lytchak, A. (2004). \textit{On the geometry of subsets of positive reach}, Manuscripta Math.\ \textbf{115}, 199--205.

\bibitem[NSW08]{NSW} Niyogi, P., S.\ Smale, and S.\ Weinberger (2008). \textit{Finding the homology of submanifolds with high confidence from random samples}, Discrete Comput.\ Geom.\ \textbf{39}(1), 419--441.

\bibitem[Yu97]{Yu97} Yu, B.\ (1997).  \textit{Assouad, Fano, and Le Cam}, in {Festschrift for Lucien Le Cam} pp.\ 423--435.  New York, NY: Springer.

\end{thebibliography}
\end{document}